\documentclass[a4paper,twocolumn]{article}
\usepackage[top=1.75cm, bottom=1.75cm, left=1.25cm, right=1.25cm]{geometry}
\usepackage[colorlinks=true,breaklinks=true,bookmarks=true,urlcolor=blue,
     citecolor=blue,linkcolor=blue,bookmarksopen=false,draft=false]{hyperref}

% When hyperref is used, otherwise outcomment 
         % When hyperref is used, otherwise outcomment 

\usepackage[utf8]{inputenc}
\renewcommand{\l}{\left}
\renewcommand{\r}{\right}

\usepackage{microtype}
\usepackage{enumerate}
\usepackage{amsmath}
\usepackage{amssymb}
\usepackage[square,numbers]{natbib}
\setlength{\bibsep}{0pt plus 0.3ex}
\usepackage{amsthm}
\usepackage{tabularx} 
\usepackage{booktabs}
\usepackage{xspace}
\usepackage{comment}
\usepackage{algorithm}
\usepackage{algorithmic}
\usepackage{eufrak}
\usepackage{color}
\definecolor{red}{rgb}{1.0,0.0,0.0}
\definecolor{blue}{rgb}{0.0,0.0,1.0}
\newcommand{\niek}[1]{\textcolor{red}{\textsf{Niek: #1}}\xspace}
\newcommand{\andy}[1]{\textcolor{blue}{\textsf{Andrey: #1}}\xspace}

\usepackage[font=small,skip=6pt]{caption}
\usepackage[font=small]{subcaption}
\setlength{\textfloatsep}{13pt plus 1.0pt minus 2.0pt}
\setlength{\intextsep}{8.0pt plus 1.0pt minus 1.0pt}
\setlength{\floatsep}{8.0pt plus 1.0pt minus 1.0pt}

\DeclareMathOperator*{\argmin}{arg\,min}

\newcommand{\reflem}[1]{Lemma~\ref{lem:#1}\xspace}
\newcommand{\refthm}[1]{Theorem~\ref{thm:#1}\xspace}
\newcommand{\refdef}[1]{Definition~\ref{def:#1}\xspace}

\newcommand{\refsec}[1]{Section~\ref{sec:#1}\xspace}

\newcommand{\reffig}[1]{Figure~\ref{fig:#1}\xspace}
\newcommand{\refprop}[1]{Proposition~\ref{prop:#1}\xspace}

 %\triangleq}
\newcommand{\term}[1]{\textsl{#1}\xspace}

\newcommand{\posint}{\ensuremath{\mathbb{N}_{>0}}\xspace}
\newcommand{\mcal}[1]{\ensuremath{\mathcal{#1}}\xspace}

\newtheorem{theorem}{Theorem}
\newtheorem{lemma}{Lemma}
\newtheorem{corollary}{Corollary}
\newtheorem{proposition}{Proposition}
\newtheorem{observation}{Observation}
\newtheorem{remark}{Remark}

\newtheorem{definition}{Definition}
\newtheorem{assumption}{Assumption}

\newcommand{\diam}{\mathsf{diam}}

\newcommand{\cor}{\mathsf{cor}}

\newcommand{\inn}{\mathsf{inn}}

\newcommand{\vsect}[2][c]{#2_{\{#1\}}} % intersect with the Voronoi cell
\newcommand{\conv}{\mathsf{ch}\,}

\newcommand{\vor}[2][]{\mathrm{Vor}_{#1}(#2)} % map from a point in the Voronoi cell to the 'Voronoi vertex'

\newcommand{\natnum}{\mathbb{N}}
\newcommand{\reals}{\mathbb{R}}

\renewcommand{\S}{\ensuremath{\mathcal{S}}\xspace}
\renewcommand{\P}{\ensuremath{\conv{\S}}\xspace}

\renewcommand{\SS}{\ensuremath{\mathbb{S}}\xspace}

\title{Average Point Pursuit using the Greedy Algorithm:\\Theory and Applications}
\author{Andrey Bernstein\\National Renewable Energy Laboratory\\Golden, CO, USA \\ \href{mailto:andrey.bernstein@nrel.gov}{\texttt{andrey.bernstein@nrel.gov}} \and Niek J. Bouman\\Dept.\ of Mathematics and Computer Science\\TU Eindhoven, the Netherlands\\\href{mailto:n.j.bouman@tue.nl}{\texttt{n.j.bouman@tue.nl}}}

\date{}

\usepackage{cryptocode}

\begin{document}

  \setlength{\parskip}{0pt}
\bibliographystyle{abbrvnat}

\maketitle

\begin{abstract}

This paper considers a discrete-time decision problem wherein a decision maker has to track, on average, a sequence of inputs 
%$x_i\in \reals^d$ for $i = 1,2,3, \ldots $ %
%$x_1, x_2, x_3 ,\ldots$ 
selected from a convex set $\mathcal X \subset \reals^d$ 
by choosing actions 
%$y_1, y_2, y_3, \ldots$ 
from a possibly non-convex feasible set $\mathcal Y\subset \reals^d$, 
where $\mathcal X$ is in fact the convex hull of $\mathcal Y$.
We study some generalized variants of this problem, in which: (i) $\mathcal X$ and $\mathcal Y$ vary with time, and (ii) there might be a \emph{delay} between them, in the sense that $\mathcal X$ is the convex hull of the \emph{previous} $\mathcal Y$.
%This paper considers a problem in which a decision maker has to track, on average, an arbitrary sequence of input points that are coming from some (possibly time-varying) convex set. The difficulty for the decision maker lies in the fact that: (i) not every input is feasible, hence some projection has to be done; and (ii) there might be a delay of information in the sense that the input originates from an outdated version of the time-varying set. 
%The paper investigates 
We investigate the conditions under which the greedy algorithm that minimizes, in an online fashion, the accumulated error between the sequence of inputs and decisions, is able to track the average input asymptotically. %The analysis 
Essentially, this comes down to proving that the accumulated error, whose evolution is governed by a non-linear dynamical system, remains within a bounded invariant set.
%Previous work has studied the case in which the set $\mathcal X$ is a polytope and $\mathcal Y$ are its corner points.
%In this paper, we relax this constraint in that $\mathcal Y$ may contain points or regions in the interior of $\mathcal X$,
%and for this case we fully characterize the invariant error set and show how to construct it.
%In the paper 
%is represented by 
%non-linear dynamical system
%
%is based on deriving bounded invariant sets for the associated non-linear dynamical system that represents the evolution of the accumulated error. 
Applications include control of discrete devices using continuous setpoints; control of highly uncertain devices with some information delay; and digital printing, scheduling, and assignment problems.
\end{abstract}

\paragraph{Keywords:} Convex dynamics, stability theory of non-linear systems, error diffusion, average tracking
\vspace{.5em}

\section{Introduction}
We consider a discrete-time decision problem, wherein at each time step $n = 0, 1, \ldots$, a decision maker has to choose an action $y_n$ that lies in a (possibly) \emph{non-convex} set $\S_n \in \mathbb S$, where $\mathbb S$ is a  possibly infinite collection of subsets of  %\subseteq 
$\reals^d$. The goal of the decision maker is to track \emph{on average} an arbitrary sequence of input variables $x_0, x_1, \ldots$, with $x_n \in \conv \S_n$, where $\conv (\cdot)$ denotes the convex hull operation. That is, the decisions $\{y_n\}$ have to satisfy
%This paper considers a setting whereby a decision maker observes a sequence of variables $x_0, x_1, \ldots$, %with $x_n \in \A_n \subseteq \reals^d$ and $\A_n$ a \emph{convex} set. 
%and, at each time step $n$, has to choose a decision $y_n$ that lies in some (possibly) \emph{non-convex} set $\S_n  \subseteq \reals^d$ so that 
\begin{equation} \label{eqn:average_error}
\l \| \frac{1}{n}\sum_{i = 0}^{n-1} x_i - \frac{1}{n}\sum_{i = 0}^{n-1} y_i \r \| \rightarrow 0
\end{equation}
as $n \rightarrow \infty$. In this paper, we investigate conditions under which the \emph{greedy algorithm} that minimizes the total accumulated error
\begin{equation} \label{eqn:e_n}
e_n := \sum_{i = 0}^{n-1} (x_i - y_i), \quad n \geq 1
\end{equation}
attains \eqref{eqn:average_error}. In particular, our focus is on the algorithm that chooses $y_n$ that minimizes 
$
\l \|e_{n} + x_n - y \r\|
$
over $y \in \S_n$, for all $n \geq 0$ (with $e_0 := \boldsymbol{0}$).  

It is convenient to view this problem as a game between three parties: (i) the decision maker (i.e., the greedy algorithm), (ii) an opponent, and (iii) an oracle. At each time step:
\begin{enumerate}
\setlength{\itemsep}{0pt}%
\setlength{\parskip}{0pt}
\item[(a)] The oracle specifies the feasible set $\S \in \SS$ and broadcasts it to the decision maker and the opponent;
\item[(b)] The opponent chooses $x \in \conv \S$ and sends it to the decision maker; %and
\item[(c)] The decision maker chooses $y \in \S$ according the greedy algorithm.
\end{enumerate}
We distinguish between two cases, as illustrated in Figures \ref{problem1} and \ref{problem2}. Figure \ref{problem1} specifies the case with the following order of operations at each time step: $(a) \rightarrow (b) \rightarrow (c)$; while Figure \ref{problem2} specifies the case where the order is $(b) \rightarrow (a) \rightarrow (c)$. Note that in the latter case, there is a delay of information on the opponent side: the opponent chooses the input $x$ based on a delayed (previous) instance of the set $\S$. We thus term these two problems as \emph{undelayed} and \emph{delayed}, respectively.

\iffalse
\begin{figure}
  \footnotesize
\[
  \begin{array}{lcccc}
    \toprule 
    \text{\textbf{Decision Maker}} & &\text{\textbf{Opponent}} && \text{\textbf{Oracle}} \\
    \midrule  \\
\text{initialization:}\\
e \gets 0 \\
\text{loop:}\\
%&&&  \sendmessageright*[1.8cm]{\text{\emph{query}}}   \\
%&&&  \sendmessageleft*[1.8cm]{\mathcal S} &  \mathcal S \in \mathbb S \\[-2em]
%& \multicolumn{3}{c}{ \sendmessageleft*[6.6cm]{\phantom{x}}} \\
%& \sendmessageleft*[1.8cm]{x} & x \in \conv \mathcal S \\
\multicolumn{2}{l}{y \gets \arg\min_{\sigma \in \mathcal{S}} \| \sigma - (x+e)\|  }\\
e \gets e + x - y \\
\text{\textbf{goto} loop}\\
\bottomrule 
\end{array}
\]
\fi

\begin{figure}
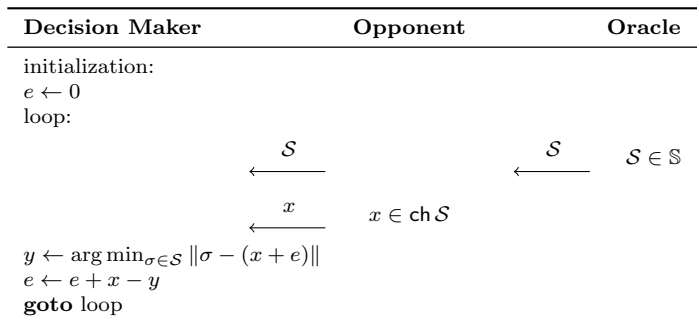

  \footnotesize
\begin{tabularx}{\columnwidth}{lXcXr}
\toprule 
\textbf{Decision Maker} & & \textbf{Opponent} && \textbf{Oracle}\\
\midrule  
initialization:\\
$e \gets 0$ \\
loop:\\
&$\sendmessageleft*[1cm]{\S}$&&$\sendmessageleft*[1cm]{\S}$& $\mathcal S \in \mathbb S$\\
&$\sendmessageleft*[1cm]{x}$& $x \in \conv \mathcal S$\\
\multicolumn{2}{l}{$y \gets \arg\min_{\sigma \in \mathcal{S}} \| \sigma - (x+e)\|  $}\\
$e \gets e + x - y $\\
\textbf{goto} loop\\
\bottomrule 
\end{tabularx}
\caption{The \textsl{Undelayed Problem}, in which the opponent chooses the input $x$ from the (convex hull of the) \emph{same} set $\mathcal S$ from which the decision maker chooses her action. \label{problem1}}
%\vspace{-0.1cm}
\end{figure}

\iffalse

\begin{figure}
\[
  \begin{array}{lcccc}
    \toprule 
    \text{\textbf{Decision Maker}} & &\text{\textbf{Opponent}} && \text{\textbf{Oracle}} \\
    \midrule  \\
\text{initialization:}\\
e \gets 0 &&&  \sendmessageright*[1.8cm]{\text{\emph{query}}}   \\[-.5em]
&&&  \sendmessageleft*[1.8cm]{\mathcal S} &  \mathcal S \in \mathbb S \\
\text{loop:}\\
&\sendmessageleft*[1.8cm]{x} & x \in \conv \mathcal S \\
&&&  \sendmessageright*[1.8cm]{\text{\emph{query}}}   \\[-.5em]
&&&  \sendmessageleft*[1.8cm]{\mathcal S'} &  \mathcal S' \in \mathbb S \\[-2em]
& \multicolumn{3}{c}{ \sendmessageleft*[6.6cm]{\phantom{x}}} \\
%& \sendmessageleft*[1.8cm]{x} & x \in \conv \mathcal S \\
\multicolumn{2}{l}{y \gets \arg\min_{\sigma \in \mathcal{S}'} \| \sigma - (x+e)\|  }\\
e \gets e + x - y  & &  \mathcal{S}\gets  \mathcal{S}'\\
\text{\textbf{goto} loop}\\
\bottomrule 
\end{array}
\]
\fi

\begin{figure}
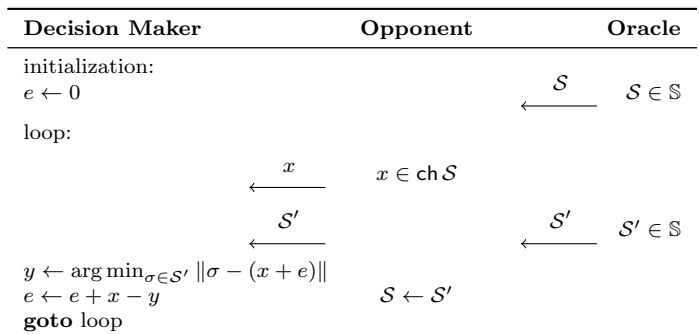

  \footnotesize
\begin{tabularx}{\columnwidth}{lXcXr}
\toprule 
\textbf{Decision Maker} & & \textbf{Opponent} && \textbf{Oracle}\\
\midrule  
initialization:\vspace{-.7em}\\
$e \gets 0$ %\\
&&&  $\sendmessageleft*[1cm]{\mathcal S}$ & $\mathcal S \in \mathbb S$ \\
loop:\\
&$\sendmessageleft*[1cm]{x}$& $x \in \conv \mathcal S$\\
&$\sendmessageleft*[1cm]{\S'}$&&$\sendmessageleft*[1cm]{\S'}$& $\mathcal S' \in \mathbb S$\\
\multicolumn{2}{l}{$y \gets \arg\min_{\sigma \in \mathcal{S}'} \| \sigma - (x+e)\|  $}\\
$e \gets e + x - y $& &  $\mathcal{S}\gets  \mathcal{S}'$\\
\textbf{goto} loop\\
\bottomrule 
\end{tabularx}
\caption{The \textsl{Delayed Problem}, wherein the opponent chooses the input $x$ from the (convex hull of the) \emph{delayed version} of the set $\mathcal S$.\label{problem2}}
%while the decision maker chooses her action from the current $\mathcal S$. \label{problem2}}
%\vspace{-0.5cm}
\end{figure}

In this paper, we show conditions on the collection of feasible sets $\SS$ under which the accumulated error $e_n$ is bounded for all $n$, and hence the average error $e_n/n$ converges to zero (cf.~\eqref{eqn:average_error}). We also propose computational methods to find tight bounds. The analysis is based on deriving \emph{bounded invariant sets} for the non-linear error dynamics
%\vspace{-0.2cm}
\begin{equation}
e_{n+1} = e_n + x_n - \arg \min_{y \in \S_n} \|e_n + x_n - y \|.
\end{equation}

\subsection{Related Work}
The greedy algorithm for total error minimization is also known as the \emph{error diffusion} algorithm and is well-known in the context of image processing and digital printing. Various special cases have been extensively explored in the literature.  In the classical version  (also known as \emph{Floyd-Steinberg dithering}),  the variables are one-dimensional \cite{floyd75, gray, Anastassiou, adams}. 
The extension to the general $d$-dimensional case was considered over the recent years in several papers.
In \cite{nowicki2004}, the problem of a single feasible set $\S$ is considered in the special case where $\S$ are the \emph{corner points} of a polytope $ \conv{\S}$, and an algorithm to construct a minimal invariant set for the error dynamics  is proposed. However, the boundedness of this set is not guaranteed in general.
\cite{adler2005} shows how to construct bounded invariant sets for that problem, and extends the results to a finite collection of polytopes. 
%Namely, it is shown that there exists a bounded set  $Q$ that is simultaneously invariant for the dynamical systems defined with respect to $\A_1,\ldots,\A_K$.
\cite{tresser2007} extends the results of \cite{adler2005} to the case where the polytope may change from step to step, and argues that there exists a bounded invariant set for the resulting time-varying dynamical system. Moreover, the conditions apply to an infinite collection of polytopes, provided that the set of face-normals of this collection is finite.
Other papers in this line of research consider specific applications \cite{boundsHalftoning, mathHalftoning, errDiffOpt} and other special cases \cite{acute}. The optimality of the error diffusion algorithm was recently analyzed in \cite{errDiffOpt}. We note that in this line of work, the term ``convex dynamics'' is  used because the error dynamics are an example of dynamics of piecewise isometries where the pieces are convex \cite{nowicki2004}.

\subsection{Contributions and Applications}
We analyze a generalized error-diffusion algorithm  with the following main contributions:
\begin{itemize}
  \setlength{\itemsep}{0pt}%
  \setlength{\parskip}{0pt}
  \item We formulate two problems involving three parties: the \textsl{undelayed} and the \textsl{delayed} one. In the latter, the input $x$ is chosen based on an outdated version of the feasible set $\S$. 
    We show that: (i) the undelayed problem is naturally associated with the $G$-dynamics analyzed in \refsec{undelayed} 
    and related to the ``dynamics in the vector space of errors'' in the terminology of \cite{nowicki2004}; and that (ii) the delayed problem is naturally associated with the $F$-dynamics analyzed in \refsec{delayed}, and related to the ``dynamics in affine space'' in the terminology of \cite{nowicki2004}.
\item We consider general non-convex sets as feasible sets $\S$, instead of restricting to the corner points of a polytope.
%  \niek{I'd say that requiring that the number of outgoing normals is finite, is equivalent to just requiring a polytope. Agree?}
  The latter restriction is natural in a color printing application, where mixed colors are emulated by printing patterns of a few basic colors that are the corner points of the color space polytope (e.g., CMYK). Our focus includes other applications, such as control of dynamical systems, for which restricting the feasible set to corner points of some polytope would be undesirable.
%  \niek{Here, we could give some explanation why this is interesting. Something like, for color printing, the corner points are natural, but for other control applications \ldots} 
%\item We consider a delayed case, in which the input vector $x$ is chosen based on an outdated version of the feasible set $\S$. 
\item We propose %an efficient algorithm 
a computational method for constructing the minimal invariant set for the accumulated error dynamics in the case of finite collection of feasible sets. We also show some important special cases in which  the minimal invariant set can be computed explicitly even when the collection is uncountably infinite. As a result, we obtain tight bounds on the accumulated error.
%\item 	\andy{How to explain the contribution with convex invariant sets?} ...By not explaining it ;-) 
%the  (out of an infinite collection of feasible sets) 
\end{itemize}

One of the main motivations for considering the undelayed and delayed problems illustrated comes from a control application, whereby a decision maker can be viewed as a controller that is supposed to track a sequence of ``setpoints'' $\{x_n\}$ on average. The opponent can be an optimization algorithm that computes these setpoints based on a convex approximation of the feasible set, whereas the oracle can represent the uncertain elements of the problem (i.e., Nature). In \cite{errDiffCDC}, we presented preliminary results in this control context that focus on the one-dimensional case. We next briefly describe the following two examples considered there. 

%\vspace{-0.5cm}
%\noindent \textbf{Example 1: Control of a photovoltaic (PV) panel that is connected to the electrical grid.}
\paragraph{Example 1: Control of a photovoltaic (PV) panel that is connected to the electrical grid.}
%For example, consider the following scenario involving a photovoltaic (PV) panel that is connected to the electrical grid.
The decision maker controls the output power of the panel, and is supposed to follow a sequence of inputs provided by the grid operator, which is the opponent. The weather plays the role of the oracle: suddenly, a cloud could block the  influx of direct sunlight on the panel, by which the maximum power output of the PV panel decreases significantly.
The delayed problem variant models the case where the opponent chose its input at time $t$, while the decision maker chooses her action at time $t' > t$, at which the set of available actions could have changed (e.g., due to moving clouds). \qed

%\vspace{-0.5cm}
%\noindent \textbf{
\paragraph{Example 2: Control of thermostatically controlled loads (TCLs).}
Another relevant example is control of thermostatically controlled loads (TCLs), such as air conditioning systems and water heaters. In particular, a TCL might be asked to track a continuous (time-varying) power setpoint in real time; however, it can only implement a finite number of such setpoints in practice (e.g., ON-OFF devices) at every time instant. In particular, at each time step $n$, the set of available setpoints is given by a \emph{discrete} set $\S_n$, whereas the target setpoint (e.g., computed by an optimization algorithm) lies in its convex relaxation, namely in the polytope $\conv \S_n$. 
The set $\S_n$ might change over time due to internal constraints, such as the constraints on the frequency of switching and comfort constraints related to the operation of the TCL. Moreover, there can be a communication delay, similarly to Example 1. \qed

We note that these power-related applications are of particular interest because the tracking of an average point can be interpreted as the tracking of the \emph{energy content} of the input signal (see, e.g., \cite{commelec1,opfPursuit}).

Other applications of interest were extensively reviewed in, e.g., \cite{adler2005, errDiffOpt}, and include digital printing (and, in particular, halftoning), scheduling, and assignment problems.

\subsection{Main Results}
Our main results rely on the following assumption on the collection of sets $\SS$.

\begin{assumption} \label{asm:S}
The collection \SS is such that $\conv{\SS} := \{\conv{\S}, \S \in \SS \}$ is a collection of polytopes such that:
\begin{enumerate}[(i)]
    \item The sizes of the polytopes are uniformly bounded;
    \item The set $\mathcal{N}$ of outgoing normals to the faces of the polytopes is finite; and
    \item For all  $\S \in \SS$, every bounded Voronoi cell in $\S$ is uniformly bounded;
\end{enumerate}

\end{assumption}
%\vspace{-0.2cm}

Part (i) of Assumption \ref{asm:S} is naturally required to obtain uniform error boundedness; and part (ii) essentially means that there is a finite number of ``shapes'' of the polytopes and is a relaxation of the assumption of finiteness of the collection $\SS$.
On the other hand, part (iii) is more technical and is illustrated in \reffig{growingvoronoi}. In particular, bounded Voronoi cells are associated with interior points of a set $\S$. These bounded Voronoi cells have to be uniformly bounded across all $\S \in \SS$. Note that this requirement does not say anything regarding the unbounded Voronoi cells (namely those associated with the boundary points of $\S$). We will give a more formal statement of part (iii) in Section \ref{sec:undelayed_bounded} (see \eqref{asm:partIII}). We note that parts (i) and (ii) of Assumption \ref{asm:S} were imposed in \cite{adler2005,tresser2007} which considered sets $\S$ that are corner points of a polytope. In that case, part (iii) is trivially satisfied, since all the Voronoi cells are associated with boundary points of $\conv (\S)$ and hence unbounded.

The following theorems summarize the main results proved in the ensuing sections.

\begin{figure}
  \vspace{-1em}
  %\begin{subfigure}[b]{.32\columnwidth}
  %\centering
  \raisebox{.45em}{\includegraphics[width=.28\columnwidth]{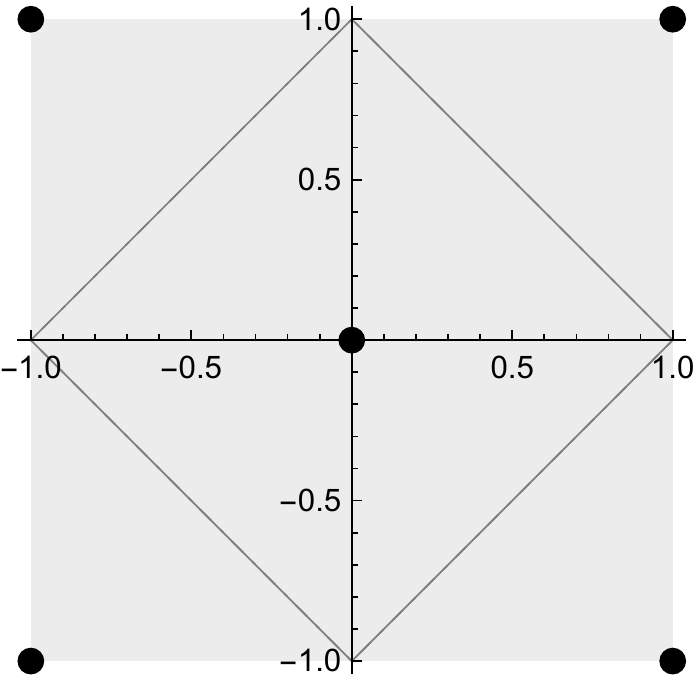}}
%\caption{
%\label{fig:shift1}}
 %   \end{subfigure}
    \hfill
%  \begin{subfigure}[b]{.32\columnwidth}
%  \centering
  \raisebox{.15em}{\includegraphics[width=.28\columnwidth]{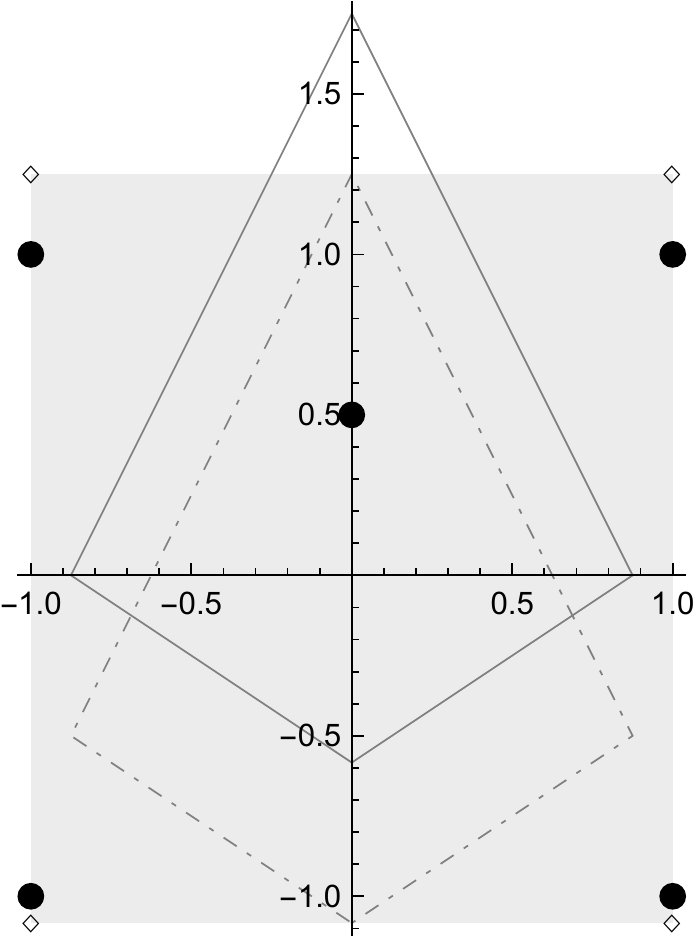}}
 %      \caption{
 %      \label{fig:shift2}}
 %   \end{subfigure}
    \hfill
 % \begin{subfigure}[b]{.32\columnwidth}
 % \centering
        \includegraphics[width=.28\columnwidth]{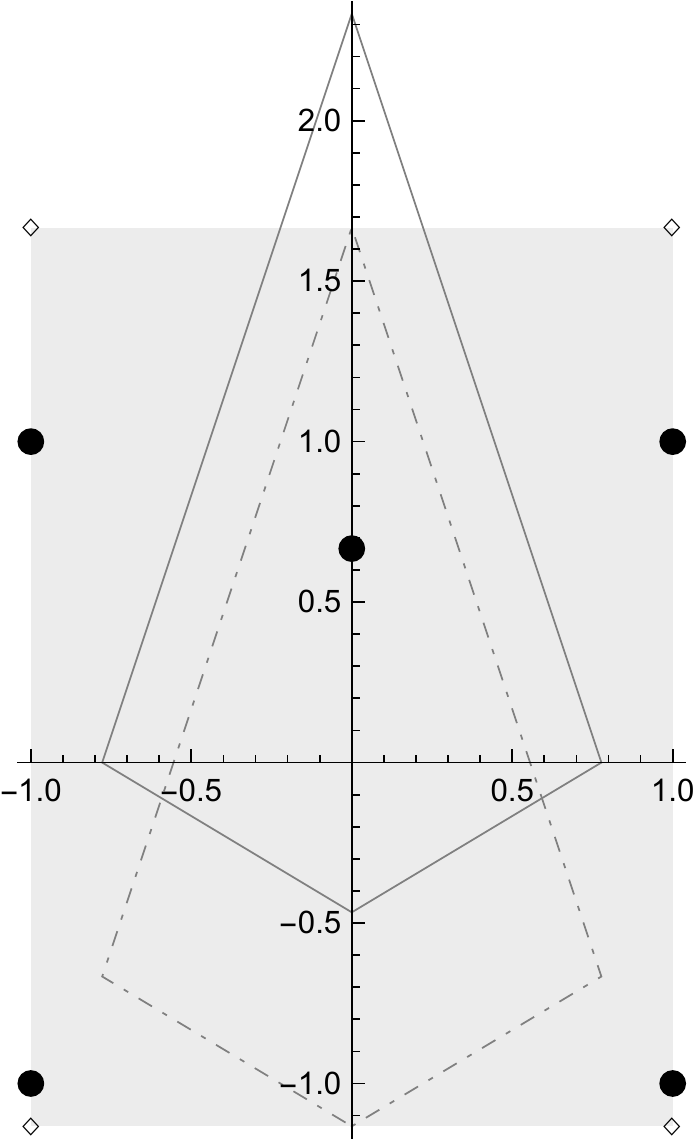}
 %     \caption{
 %     \label{fig:shift3}}
 %   \end{subfigure}
    \caption{As the interior point, positioned at $(0,0)$ in the left subfigure, moves closer towards the boundary of the convex hull of all points, the corresponding bounded Voronoi region (shown as the solid polygon) grows. In the limit, when the distance between the interior point and the boundary becomes infinitesimally small, the growth is unbounded.\label{fig:growingvoronoi} \\
    In each subfigure, the gray rectangle shows the minimal invariant error set that corresponds to the set $\S$ in that subfigure (the constellation of the black dots), which is guaranteed (by \refprop{bounded_vor_regions}) to cover a copy of the bounded Voronoi region that is translated to the origin (shown as the dashed polygon).}
%\vspace{-0.4cm}
\end{figure}

%\vspace{-0.2cm}
\begin{theorem} \label{thm:undelayed}
Consider the undelayed problem illustrated in Figure \ref{problem1}. 
Under Assumption \ref{asm:S}, there exists a \emph{bounded} set $Q$, such that $e_n \in Q$ for all $n \geq 0$. Moreover, a \emph{minimal} such set $Q$ can be efficiently characterized via a fixed point equation.
\end{theorem}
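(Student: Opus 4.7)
The plan is to rewrite the dynamics as $e_{n+1}=G_{\S_n}(e_n,x_n)$, where $G_\S(e,x):=e+x-\pi_\S(e+x)$ and $\pi_\S(z)\in\argmin_{y\in\S}\|z-y\|$. The key geometric observation is that $\pi_\S(e+x)=y$ iff $e+x$ lies in the Voronoi cell $V_\S(y)$ of $y$ inside $\S$, whence $G_\S(e,x)\in V_\S(y)-y$. Consequently, if $y$ is interior to $\conv{\S}$ then the new error lies inside the \emph{bounded} polytope $V_\S(y)-y$; the only non-trivial scenario is when $y$ sits on a face of $\conv{\S}$, in which case $V_\S(y)$ is unbounded in the outward normal direction.

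For the existence of a bounded invariant set, I would construct an explicit polytope $Q_0$ whose facets are indexed by the finite set $\mathcal{N}$ of face normals guaranteed by Assumption \ref{asm:S}(ii), of the form $Q_0=\{e:\,n\cdot e\le \alpha_n\ \forall n\in\mathcal{N}\}$. The slack $\alpha_n$ is chosen large enough to dominate both (a) the widths $\sup_{\S\in\SS}\max_{z\in \conv{\S}} n\cdot z$ from part (i) and (b) the widths of all translated bounded Voronoi cells $V_\S(y)-y$ from part (iii). To prove forward invariance I would fix $e\in Q_0$, $x\in\conv{\S}$, set $y=\pi_\S(e+x)$ and $e'=e+x-y$, and verify $n\cdot e'\le \alpha_n$ for every $n\in\mathcal{N}$: when $V_\S(y)$ is bounded the inclusion $e'\in V_\S(y)-y$ gives the bound directly; when $y$ lies on a boundary face with outward normal $n_F\in\mathcal{N}$, the identity $n_F\cdot y=\max_{z\in\conv{\S}}n_F\cdot z\ge n_F\cdot x$ yields the ``contractive'' inequality $n_F\cdot e'\le n_F\cdot e\le \alpha_{n_F}$ along $n_F$.

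The main obstacle is the calibration of the slacks $\alpha_n$: along directions $n\in\mathcal{N}$ different from the current $n_F$, a single step can introduce a tangential increment $n\cdot(x-y)$ of size up to the $n$-width of $\conv{\S}$, and one must show that these increments cannot accumulate across steps so that a single finite $\alpha_n$ closes the induction. Resolving this is where the three parts of Assumption \ref{asm:S} conspire: part (ii) bounds the combinatorics of possible directions, part (i) controls the per-step tangential drift, and part (iii) guarantees that whenever the trajectory lands in a bounded Voronoi cell the error is ``reset'' inside a uniformly bounded region. I expect the cleanest calibration to follow from a self-consistent system of inequalities on $\{\alpha_n\}_{n\in\mathcal{N}}$ whose solvability is itself a finite-dimensional fixed-point problem.

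Finally, to obtain the minimal invariant set I would introduce the monotone set-valued operator
\[
\Phi(Q)\;:=\;\{\boldsymbol{0}\}\;\cup\;\bigcup_{\S\in\SS}\bigcup_{e\in Q,\,x\in\conv{\S}}\{G_\S(e,x)\}
\]
on the complete lattice of closed subsets of $Q_0$ ordered by inclusion. Knaster--Tarski (or, constructively, taking the closure of $\bigcup_{k\ge 0}\Phi^k(\{\boldsymbol{0}\})$) delivers a least fixed point $Q^\star=\Phi(Q^\star)$, which must be the minimal invariant set, since any invariant set containing $e_0=\boldsymbol{0}$ contains each iterate $\Phi^k(\{\boldsymbol{0}\})$ and hence $Q^\star$. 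This is exactly the fixed-point equation promised in the theorem, and when $\SS$ is finite it yields an effective procedure for computing $Q^\star$ by iterating $\Phi$ from $\{\boldsymbol{0}\}$.
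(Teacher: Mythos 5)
Your fixed-point/minimality argument is essentially the paper's: the operator you call $\Phi$ coincides (up to the redundant $\{\boldsymbol 0\}$ term, since $Q\subseteq\mathfrak g_\SS(Q)$ always holds) with the paper's operator $\mathfrak g_\SS$ from Definition~\ref{def:g}, the equivalence between $G$-invariance and $\mathfrak g_\SS(Q)\subseteq Q$ is Proposition~\ref{prop:inv_mult}, and your Knaster--Tarski least fixed point over the lattice of closed subsets is exactly the monotone limit $\mathfrak g^\infty_\SS(\{\boldsymbol 0\})$ of Theorem~\ref{theo:min_G}. So that half is correct and follows the same route.

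The boundedness argument, however, contains a genuine gap, and you flag it yourself. The contraction $n_F\cdot e'\le n_F\cdot e$ controls only the single direction $n_F$ normal to a face of $\conv\S$ on which the chosen $y$ happens to lie; for every other $n\in\mathcal N$, a step can add a tangential increment as large as the $n$-width of $\conv\S$. Asserting that a finite solution to your ``self-consistent system of inequalities on $\{\alpha_n\}$'' exists is not a bookkeeping detail: one has to exploit the Voronoi-membership of $e+x$ to argue that whenever $\|e\|$ is large the error vector must already point outward along precisely the $n_F$ for which contraction holds, so the transverse drift cannot accumulate. This is exactly the content of Theorem~2.1 in \cite{tresser2007}, which the paper invokes directly rather than re-deriving, and which your sketch would need to reprove. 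In addition, the paper's own contribution to boundedness is an \emph{extension} of that cited result from pure corner-point sets to sets with inner points: via \reflem{vor_Sc} one obtains $\mathfrak g_\S(Q_r)\subseteq \mathfrak g_{\cor(\S)}(Q_r)\cup\bigcup_{c\in\inn(\S)}\bigl((\conv\S+Q_r)\cap V_\S(c)\bigr)-c$, and the proof then shows, using Assumption~\ref{asm:S}(iii), that for $r$ large enough, uniformly over $\SS$, the inner-point contribution is absorbed into $\mathfrak g_{\cor(\S)}(Q_r)=Q_r$. Your sketch notes that bounded Voronoi cells give a direct bound when $y$ is interior, but does not address why a \emph{single} polytope $Q_0$ can simultaneously contain all of these translated cells across $\SS$ and remain invariant under the corner dynamics; that uniform-domination step is where Assumption~\ref{asm:S}(iii) is actually used, and it is missing from your argument.
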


%\vspace{-0.5cm}

\begin{theorem} \label{thm:delayed}
Consider the delayed problem illustrated in Figure \ref{problem2}.  
Assume that $\bigcap_{\S \in \SS} \S \neq \emptyset$. 
Under Assumption \ref{asm:S}, there exists a \emph{bounded} set $D$, such that $x_n + e_n \in D$ for all $n \geq 0$. Moreover, a \emph{minimal} such set $D$ can be efficiently characterized via a fixed point equation.
\end{theorem}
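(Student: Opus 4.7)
The natural state variable for the delayed problem is $z_n := e_n + x_n$, the argument of the greedy minimization that selects $y_n$. Substituting the greedy rule $y_n = \arg\min_{y \in \S_{n+1}} \|z_n - y\|$ into $e_{n+1} = e_n + x_n - y_n$ yields $e_{n+1} = z_n - y_n$ and hence
\[
z_{n+1} \;=\; z_n - \pi_{\S_{n+1}}(z_n) + x_{n+1} \;=:\; F_{\S_{n+1},\, x_{n+1}}(z_n),
\]
where $\pi_\S(\cdot)$ denotes the (greedy) nearest-point map onto $\S$. This is the $F$-dynamics announced in the introduction, and the theorem is equivalent to exhibiting a bounded set $D$ that is forward-invariant under every $F_{\S,x}$ with $\S \in \SS$ and $x \in \conv \S$.

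The plan is to produce invariance via a Lyapunov-type argument anchored at a common point $y^\star \in \bigcap_{\S \in \SS} \S$, whose existence is precisely the extra hypothesis of the theorem. Because $y^\star \in \S_{n+1}$ for every $n$, greediness gives $\|z_n - \pi_{\S_{n+1}}(z_n)\| \leq \|z_n - y^\star\|$, and together with the uniformly bounded polytope diameter from Assumption~\ref{asm:S}(i) this already implies the crude estimate $\|z_{n+1} - y^\star\| \leq \|z_n - y^\star\| + C$. The substantive task is to sharpen this into uniform boundedness by splitting into two regimes. When $z_n \in \conv \S_{n+1}$, the residual $z_n - \pi_{\S_{n+1}}(z_n)$ is a translate of a bounded Voronoi cell of $\S_{n+1}$ and is therefore uniformly controlled by Assumption~\ref{asm:S}(iii); consequently $z_{n+1}$ lies in $\conv \S_{n+1}$ dilated by a uniformly bounded offset. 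When $z_n \notin \conv \S_{n+1}$, the finite set $\mathcal{N}$ of face normals (Assumption~\ref{asm:S}(ii)) partitions the exterior into finitely many normal cones; on each cone the displacement $\pi_{\S_{n+1}}(z_n) - z_n$ has a component in the inward normal direction of magnitude at least the exterior distance to $\conv \S_{n+1}$, up to an additive term of Assumption~\ref{asm:S}(iii) type accounting for projecting onto $\S$ rather than $\conv \S$. For $\|z_n - y^\star\|$ large enough this contraction dominates the bounded perturbation $\|x_{n+1} - y^\star\|$, giving a radius $R^\star$ such that $\|z_n - y^\star\| > R^\star$ implies $\|z_{n+1} - y^\star\| < \|z_n - y^\star\|$.

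Combining the two regimes yields a bounded set $D_0$, for instance the ball $B_{R^\star + C}(y^\star)$, with $F_{\S,x}(D_0) \subseteq D_0$ for every $\S \in \SS$ and $x \in \conv \S$; this already implies the first assertion of the theorem. For the minimal such set I would follow the scheme of Section~\ref{sec:undelayed}: define the monotone operator
\[
T(A) \;:=\; \bigcup_{\S \in \SS,\; x \in \conv \S} F_{\S, x}(A),
\]
observe that forward invariance is equivalent to $T(A) \subseteq A$, and characterize the minimal bounded invariant set as the (closure of the) least fixed point of $A \mapsto \{z_0\} \cup T(A)$ within $D_0$, or equivalently as the closure of the union of forward orbits starting from any reachable initial condition.

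I expect the main obstacle to be the quantitative contraction step outside $\conv \S_{n+1}$: the greedy map projects onto the possibly non-convex set $\S_{n+1}$, not onto $\conv \S_{n+1}$, so one must show that the extra slack between $\pi_{\S_{n+1}}(z)$ and $\pi_{\conv \S_{n+1}}(z)$ is uniformly bounded, which is exactly what Assumption~\ref{asm:S}(iii) is designed to deliver. Verifying that this slack, combined with the perturbation $x_{n+1} - y^\star$, is strictly dominated by the gain in the normal direction for $\|z_n - y^\star\|$ sufficiently large is the delicate geometric step on which the whole argument hinges.
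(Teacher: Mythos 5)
Your reformulation in terms of $z_n = e_n + x_n$, the identification of the $F$-dynamics, and the fixed-point characterization of minimality are all correct and agree with the paper (your operator $T$ is exactly the paper's $\mathfrak{p}_\SS$, since $\bigcup_{x \in \conv\S} F_{\S,x}(A) = \conv\S + \bigcup_{c\in\S}(A\cap V_\S(c)) - c = \mathfrak{p}_\S(A)$, and Proposition~\ref{prop:inv_mult_p} is precisely the fixed-point criterion you describe). The hypothesis $\bigcap_{\S\in\SS}\S\neq\emptyset$ is also used in the paper, but in a different way than you propose: it is used to initialize the iteration at $s_0\in\bigcap\S$, which forces the first iterate to equal $\bigcup_\S \conv\S$ (Lemma~\ref{lem:union_min}), not as the anchor of a Lyapunov function.

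The boundedness step, however, is where your argument breaks down, and the failure is not a gap that can be patched by tightening constants. You claim there is an $R^\star$ such that $\|z_n - y^\star\| > R^\star$ implies $\|z_{n+1} - y^\star\| < \|z_n - y^\star\|$, and then deduce invariance of a Euclidean ball $B_{R^\star + C}(y^\star)$. Both the strict contraction and the weaker ball-invariance claim are false. Take $\SS=\{\S\}$ with $\S$ the four vertices of the unit square and $y^\star=(0,0)$; this satisfies Assumption~\ref{asm:S}. For $z=(R,0)$ with $R$ large, the nearest point in $\S$ is $(1,0)$, so the residual is $(R-1,0)$; choosing the adversarial input $x=(1,1)\in\conv\S$ gives $z_{\mathrm{new}}=(R,1)$ with $\|z_{\mathrm{new}}-y^\star\|=\sqrt{R^2+1} > R = \|z-y^\star\|$ for every $R$. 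Thus no ball centered at $y^\star$ (nor at any other point, by a similar argument in the orthogonal direction) is forward-invariant. The underlying difficulty is that the residual $z-\pi_\S(z)$ is not a small perturbation at all: in the normal cone of a corner $c$ it is essentially $z-c$ itself, i.e.\ of magnitude comparable to $\|z-y^\star\|$, so the map is far from a contraction and the perturbation $x-y^\star$ can push it outward in a direction in which the residual has no slack. The paper avoids this entirely: boundedness is proved constructively by invoking the Adler--Tresser geometric construction (Theorem~2.1 of \cite{tresser2007}), which exploits the finiteness of the face-normal set $\mathcal{N}$ to build a polytopal invariant set whose \emph{shape} matches the normal fan, extended here to accommodate interior points of $\S$ via Assumption~\ref{asm:S}(iii) (see the proof of Theorem~\ref{theo:bound_G} and Theorem~\ref{thm:bounded_F}). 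Any correct boundedness argument must produce an invariant set whose geometry is adapted to $\mathcal{N}$ rather than a metric ball.
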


Note that these theorems obviously imply that $\|e_n\| \leq C$ for some $C < \infty$ for all $n$, and hence property \eqref{eqn:average_error} is satisfied.

%\vspace{-0.3cm}
\section{Preliminaries}
Throughout the paper, $\|\cdot\|$ denotes the $\ell_2$ norm. We use $\natnum$ and  $\posint$ to refer to the natural numbers including and excluding zero, respectively. For arbitrary $n\in \natnum_{>0}$, we write $[n]$ for the set $\{1,\ldots,n\}$. We use $\boldsymbol{0}$ to denote the zero vector $(0,\ldots,0)$, where the dimension should be clear from its context.

Let $d\in \natnum_{>0}$.
%For an arbitrary set $\mcal{U}\in \reals^d$, $-\mcal{U}:=\{ -u \,|\, \forall u\in \mcal{U}\}$.
For arbitrary sets $\mcal{U},\mcal{V} \in \reals^d$,  $\mcal{U}+\mcal{V}$ represents the Minkowski sum of \mcal{U} and \mcal{V}, which is defined as
$\mcal{U}+\mcal{V} := \{u+v \,|\,  u \in \mcal{U},  v\in \mcal{V} \}$. Likewise, $\mcal{U}-\mcal{V}$ represents the Minkowski difference, defined as $\mcal{U}-\mcal{V} := \{u -v) \,|\,  u \in \mcal{U},  v\in \mcal{V} \}$. We let $\conv \mcal{U}$ denote the convex hull of the set $\mcal{U}$, and by $\partial \mcal{U}$ we denote the \emph{boundary} of %the convex hull of 
\mcal{U}.
%\begin{definition} \label{def:proj} %[Projection ]
For any compact set \mcal{V}, we define the
\term{diameter of \mcal{V}} as
$
\diam \mcal{V} := \max \{ \| v-w \| : v,w \in \mcal{V} \}.
$

%  Fix $d\in \natnum_{>0}$.
%  For all
  Let $\mcal{S} \subset \reals^d$ be an arbitrary non-empty closed set.
  Any mapping $\vor[\mcal{S}]{x}$ that satisfies
\[
  \vor[\mcal{S}]{x} =  c, \quad\text{where}\quad
c \in \arg \min_{\rho \in\mcal{S}} \|\rho-x \|.
\]
is called  a \term{closest-point} (or \emph{projection}) operator onto \mcal{S}.
The \emph{Voronoi cell} associated with the set $\S$ and  $c \in \S$ is 
\[
V_{\S}(c) := \{x \in \reals^d: \, \|x - c\| \leq \|x - c'\|, \forall c' \in \S\}.
\]
For any  $\mcal{V} \subseteq \reals^d$, we denote the intersection of $\mcal{V}$ with $V_{\S}(c)$: 
\[
\mcal{V}_{\{c\}} := \mcal{V} \cap V_{\S}(c) 
\]
whenever the set $\S$ is clear from the context.

%sati
%
%
%  and all $x\in \reals^d$ we define
%
%  For all non-empty sets $\mcal{S} \subset \reals^d$ and all $x\in \reals^d$ we define
%\label{def:projection}
%\niek{maybe an equation number instead of a definition}
%\end{definition}

%A \term{power setpoint} is a tuple $u = (P,Q)\in \mathbb{R}^2$, where $P$ denotes real power and $Q$ denotes reactive power.
%Let $\mathcal{B}(\mathbb{R}^2)$ denote the collection of all \emph{bounded} non-empty closed subsets of $\mathbb{R}^2$, and let
 %$\mathcal{C}(\mathbb{R}^2)$ denote the collection of all \emph{convex} sets in $\mathcal{B}(\mathbb{R}^2)$.

 %For vectors $v\in \reals^2$, as well as for sets $\mcal{V} \subseteq \reals^2$, we use superscripts $P$ and $Q$ to refer to the first and second coordinate, respectively. I.e., $v= (v^P,v^Q)$ and $\mcal{V}^P = \{ x\, |\, (x,y) \in \mcal{V}\}$ and $\mcal{V}^Q = \{ y\, |\, (x,y) \in \mcal{V}\}$ hold.

%\begin{definition}
%\niek{maybe an equation number instead of a definition}
%\end{definition}

%For any matrix $M$, we write $M \succeq 0$ to state that it is \emph{positive semi-definite}, and $M \succ 0$ to state that it is \emph{positive definite}. Also, we write $\l\| M \r\|$ to denote its \emph{induced $\ell_2$ matrix norm}.

%Finally, 
Throughout the paper, a ``set'' (or ``subset'') denotes a ``closed set'' (or ``closed subset'') unless specified otherwise explicitly.

%\subsection{Technical Preliminaries}

%We next list well-known results that are useful in our proofs.
%We next list well-known results that are useful in our proofs.

\iffalse

\begin{corollary}
\niek{never used in the paper\ldots}
For all sets $A$ and $B$,
\[
\conv{(A \cup B)} \supseteq \conv{A} \cup \conv{B}
\]
\end{corollary}
\fi

\begin{proposition}[The Minkowski sum is distributive over unions] \label{prop:mink_union}
Let $G$ be a group and let $A$, $B$ and $C$ be arbitrary subsets of $G$. Then, 
$
A + (B \cup C) = (A + B) \cup (A + C).
$
%\vspace{-0.3cm}
\end{proposition}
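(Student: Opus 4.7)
The plan is to prove the identity by the standard element-chasing argument for direct-image distributivity, handling the two inclusions separately. Neither inverses nor associativity of the group operation play any role: the result holds verbatim for an arbitrary binary operation $+: G \times G \to G$, because the image of a product set over a union of the second factor always distributes.

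For the forward inclusion $A + (B \cup C) \subseteq (A+B) \cup (A+C)$, I would take an arbitrary element $z \in A + (B \cup C)$. By the definition of Minkowski sum, $z = a + w$ for some $a \in A$ and $w \in B \cup C$. The key move is to split into cases on the membership of $w$. If $w \in B$, then $z = a+w \in A+B$; otherwise $w \in C$ and $z \in A+C$. In either case $z \in (A+B) \cup (A+C)$.

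For the reverse inclusion $(A+B) \cup (A+C) \subseteq A + (B \cup C)$, I would dually take $z \in (A+B) \cup (A+C)$ and split on which summand it belongs to. If $z \in A+B$, write $z = a+b$ with $a \in A, b \in B$; since $B \subseteq B \cup C$, we have $b \in B \cup C$, so $z \in A + (B \cup C)$. The case $z \in A+C$ is symmetric.

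There is no real obstacle here; the proposition is essentially a restatement of the fact that direct images commute with unions, specialized to the image of $A \times (B \cup C) = (A \times B) \cup (A \times C)$ under the group operation. I would write the proof as two short paragraphs, one per inclusion, and perhaps remark that the statement generalizes without change to arbitrary (not necessarily binary) unions $\bigcup_{i \in I} B_i$.
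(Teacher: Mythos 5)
Your proof is correct, and it is worth noting that the paper states this proposition without proof, treating it as a basic fact; there is therefore no proof in the paper to compare against. Your element-chasing argument (forward inclusion by case-splitting on $w \in B$ vs.\ $w \in C$, reverse inclusion by $B, C \subseteq B \cup C$) is the standard one, and your remark that the group structure is irrelevant --- the identity holds for any binary operation and indeed for arbitrary unions, since direct images commute with unions --- is accurate and, if anything, shows the hypothesis ``$G$ a group'' in the paper's statement is stronger than needed.
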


\iffalse
\begin{proposition}[Minkowski sums and convex hulls commute]
\niek{never used in the paper\ldots}
For arbitrary sets A and B,
\[
\conv{(A + B)} = \conv A + \conv B.
\]
\end{proposition}
\fi

\section{Analysis: Undelayed Problem} \label{sec:undelayed}
%\vspace{-0.1cm}

%\subsection{Perfect Prediction} \label{sec:perf}
Recall from Figure \ref{problem1} that at each time step $n$, the input $x_n$ lies in $\conv \S_{n}$, and the decision $y_n$ is computed according to the greedy algorithm, namely 
\begin{equation} \label{eqn:err_diff}
y_n = \vor[\S_n]{e_n + x_n}.
\end{equation}
Therefore,  the dynamics for the accumulated error variable $e_n$ \eqref{eqn:e_n} is given by
\begin{equation} \label{eqn:e_n_perfect}
  e_{n+1} = e_n + x_n - \vor[\S_n]{e_n + x_n}.
\end{equation}
%As in \cite{adler2005, tresser2007}, we consider the dynamics for the accumulated error variable $e_n$ imposed by the error diffusion algorithm.
Similarly to \cite{adler2005, tresser2007}, 
%To this end, let $d\in \natnum$, and  
for any non-empty 
set $\S \subset \reals^d$ and any $x \in \conv{\S}$ we define the map
\begin{align*}
G_{\S,x }:  \reals^d &\rightarrow \reals^d, \qquad  
e  \mapsto e + x - \vor[\S]{e + x}.
\end{align*}
The dynamics for the accumulated error \eqref{eqn:e_n_perfect} can be then expressed as
  \begin{equation} \label{eqn:err_diff_dyn}
  e_{n+1} = G_{\S_n,x_n }(e_n).    
  \end{equation}
For any set $A \subseteq \reals^d$, we also use $G_{\S,x }$ as a set operator on $A$, by which mean
\[
  G_{\S,x}(A) := \bigcup_{a\in A} G_{\S,x}(a).
\]

\begin{definition}[Invariance] \label{def:invariance}
We say that a set $Q \subseteq \reals^d$ is $G$-invariant with respect to a 
%finite non-empty discrete 
set $\S \subset \reals^d$
if
\[
\forall \, x \in \P, \quad G_{\S,x}(Q) \subseteq Q.
\]
We say that $Q$ is $G$-invariant with respect to a collection $\SS$ if it is $G$-invariant with respect to every $\S \in \SS$. 
%If the mapping $G$ is clear from the context, we simply use ``invariant'' instead of ``$G$-invariant''.
\end{definition}

\begin{remark}
Our definition of invariance is a special case of
\emph{robust positively invariant sets}  in robust set-invariance theory \cite{bla99}.
Indeed, for any dynamical system $s_{n+1} = f(s_n, w_n)$, a robust positively invariant set is any set $\Omega$ that satisfies $f(\Omega, w) \subseteq \Omega$ for all $w \in \mathbb{W}$, where $\mathbb{W}$ is a set of all possible disturbances. In our case, we can view the pair $(\S_n, x_n)$ as a disturbance $w_n$ that lies in the set
\[
\mathbb{W} := \{(\S, x): \, \S \in \SS, x \in \conv \S \}.
\]
\end{remark}

The following observation makes the connection between invariant sets and boundedness of the accumulated error.
\begin{observation} \label{prop:inv_bound}
Let $\SS$ be a collection of subsets of $\reals^d$. Consider the dynamics \eqref{eqn:err_diff_dyn}, where $\S_n \in \SS$ for all $n$. Let $Q$ be a $G$-invariant set with respect to the collection $\SS$. Then, if $e_0 \in Q$, it holds that $e_n \in Q$ for all $n \geq 1$.
\end{observation}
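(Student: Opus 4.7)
The plan is to establish the statement by a straightforward induction on $n$, leveraging the definition of $G$-invariance (\refdef{invariance}) together with the fact that at every time step the pair $(\S_n, x_n)$ belongs to the ``admissible disturbance'' set $\{(\S,x) : \S \in \SS,\, x \in \conv \S\}$ over which invariance is quantified.

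First, I would fix notation: recall that the dynamics \eqref{eqn:err_diff_dyn} reads $e_{n+1} = G_{\S_n, x_n}(e_n)$, where by the problem setup $\S_n \in \SS$ and $x_n \in \conv \S_n$ for every $n$. I would also record the trivial observation that, by the set-valued extension of $G_{\S, x}$ introduced just before \refdef{invariance}, for any singleton $\{a\} \subseteq A$ we have $G_{\S,x}(a) \in G_{\S,x}(A)$.

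The base case $n = 0$ is immediate from the hypothesis $e_0 \in Q$. For the inductive step, assume $e_n \in Q$. Then by the set-operator definition of $G$,
\[
e_{n+1} = G_{\S_n, x_n}(e_n) \in G_{\S_n, x_n}(Q).
\]
Since $Q$ is $G$-invariant with respect to the collection $\SS$, and since $\S_n \in \SS$ and $x_n \in \conv \S_n$, \refdef{invariance} gives $G_{\S_n, x_n}(Q) \subseteq Q$. Hence $e_{n+1} \in Q$, completing the induction.

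There is no substantive obstacle here: the statement is essentially a tautological unpacking of the invariance definition applied along a trajectory, and the only thing to be careful about is to quote the invariance property for the \emph{specific} $\S_n \in \SS$ and $x_n \in \conv \S_n$ that arise at each step, which is precisely what ``invariant with respect to the collection $\SS$'' guarantees uniformly.
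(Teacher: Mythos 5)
Your proof is correct, and it is precisely the straightforward induction the paper leaves implicit: the paper states this as an \emph{Observation} without supplying any proof at all, treating it as a tautological unpacking of \refdef{invariance}. Your argument matches the intended (unwritten) reasoning exactly.
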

In particular, it follows from Observation \ref{prop:inv_bound} that if $Q$ is a bounded invariant set that contains the origin, and $e_0 \in Q$, then the accumulated error is bounded for all $n$ by $\max_{v \in Q} \|v\|$.
%\andy{put first definition of invariance in the sense of Prop 1, and probably omit Def 1, and skip prop.1}

Our next goal is to find \emph{minimal} $G$-invariant sets in order to obtain \emph{tight} bounds for the accumulated error. 
\begin{definition}[Minimality]
  Let $Q \subseteq \reals^d$ be a set that is  G-invariant with respect to a set $\S$ (respectively, a collection $\SS$), and let $Q$ contain  $A\subseteq \reals^d$.
  We say that $Q$ %a G-invariant set $Q$ with respect to a set $\S$ (respectively, a collection $\SS$) that contains a set $A$ 
  is \emph{minimal} if it is contained in any $G$-invariant set $Q'$ (with respect to a set $\S$, respectively, a collection $\SS$) that contains $A$.
  %with respect to a set $A$ if $Q$ contains $A$ and
%any  
  % is contained in
  %the minimal invariant set with respect
\end{definition}

In particular, we provide conditions for the existence of \emph{bounded} minimal $G$-invariant sets, and a method to compute these sets. Our analysis follows that of  \cite{nowicki2004}; however, different from \cite{nowicki2004}, we cover the case of \emph{general} non-convex sets $\S$ rather then just corner points of a polytope.

\subsection{Characterization of $G$-Invariant Sets via a Fixed-Point Equation}
We next provide a convenient characterization of $G$-invariant sets using the following set operators. 
\begin{definition}[Set operators induced by a collection of sets] \label{def:g}
Fix $d \in \natnum$.
For any %discrete 
set %of points 
$\S \subseteq \reals^d$, respectively, a collection $\SS$ of subsets of $\reals^d$, define:
\[
  \mathfrak{g}_\S(Q):=\bigcup_{c \in \S } \vsect{(\P+Q)} -c, \quad\text{and}\quad \mathfrak{g}_\SS(Q):=\bigcup_{\S \in \SS} \mathfrak{g}_\S(Q).
\]
%For a collection $\SS$, let
%\[
%\mathfrak{g}_\SS(Q):=\bigcup_{\S \in \SS} \mathfrak{g}_\S(Q).
%\]
\end{definition}

%\begin{proposition}[Fixed-point characterization of $G$-invariance] \label{prop:inv_mult}
\begin{proposition}[$G$-invariance as a fixed-point equation] \label{prop:inv_mult}
\begin{enumerate}[(i)]
\item[]
\item For any $Q \subseteq \reals^d$, 
%\[
it holds that $Q \subseteq \mathfrak{g}_\SS(Q)$.
%\]
\item $Q \subseteq \reals^d$ is $G$-invariant with respect to $\SS$ if and only if 
\[
\mathfrak{g}_\SS(Q) \subseteq Q .
\]
\end{enumerate}
\end{proposition}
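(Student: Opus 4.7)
My plan is to first rewrite $\mathfrak{g}_\SS(Q)$ in terms of the $G$-operator, namely to establish the identity
\[
\mathfrak{g}_\SS(Q) \,=\, \bigcup_{\S \in \SS}\bigcup_{x \in \P} G_{\S,x}(Q),
\]
from which both parts of the proposition fall out. Working with a single $\S$ at a time, the identity is a ``partition-by-Voronoi-cell'' rewriting: every image point has the form $G_{\S,x}(e) = y - c$, where $y \defas e + x \in Q + \P$ and $c \defas \vor[\S]{y} \in \S$ certifies $y \in V_\S(c)$. Conversely, given $c \in \S$ and $y \in (\P+Q) \cap V_\S(c)$, decomposing $y = x + q$ with $x \in \P, q \in Q$ makes $c$ a legitimate value of $\vor[\S]{y}$, so $y - c = G_{\S,x}(q) \in G_{\S,x}(Q)$. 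Taking unions over $c \in \S$ and $x \in \P$ yields $\mathfrak{g}_\S(Q) = \bigcup_{x \in \P} G_{\S,x}(Q)$, and a further union over $\S$ gives the full identity.

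With the identity in hand, part (ii) is essentially a tautology. By \refdef{invariance}, $Q$ is $G$-invariant with respect to $\SS$ iff $G_{\S,x}(Q) \subseteq Q$ for every $\S \in \SS$ and every $x \in \P$, iff the union on the right above is contained in $Q$, iff $\mathfrak{g}_\SS(Q) \subseteq Q$.

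For part (i), I would exhibit, for each $e \in Q$ and any fixed $\S \in \SS$, a witness $c^* \in \S$ such that the choices $x = c^*$ and $q = e$ certify $e \in \mathfrak{g}_\S(Q)$. The natural choice is to take $c^*$ to be a maximizer of the linear functional $c \mapsto \langle e, c\rangle$ on $\S$; this maximizer exists because, in the context of Assumption~\ref{asm:S}(i), $\S$ is closed and contained in a bounded polytope, hence compact. Then for every $c' \in \S$,
\[
\|e + c^* - c'\|^2 - \|e\|^2 \,=\, 2\langle e, c^* - c'\rangle + \|c^* - c'\|^2 \,\geq\, 0,
\]
so $c^*$ is a closest point of $\S$ to $e + c^*$, i.e., $e + c^* \in V_\S(c^*)$. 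Since $e + c^* \in Q + \P$ as well (using $e \in Q$ and $c^* \in \S \subseteq \P$), we conclude $e = (e + c^*) - c^* \in \mathfrak{g}_\S(Q) \subseteq \mathfrak{g}_\SS(Q)$.

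The only genuinely non-obvious step is the last one: a generic element of $Q$ is not \emph{a priori} a ``next-step error'' starting from $Q$, so one must manufacture an input $x$ that makes the greedy dynamics reproduce $e$ exactly. Choosing $c^*$ as an extreme point of $\S$ in the direction $e$ does the trick by forcing $e + c^*$ into the Voronoi cell of $c^*$; the rest is bookkeeping with the set-valued definitions.
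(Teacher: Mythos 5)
Your proof is correct and follows the same basic decomposition as the paper: an element of $\mathfrak{g}_\S(Q)$ is precisely something of the form $x - \vor[\S]{x}$ with $x \in \P + Q$, and part (ii) is then a direct unwinding of Definition~\ref{def:invariance} (the paper routes this through the single-$\S$ statement, Proposition~\ref{prop:inv}, before taking unions; your identity $\mathfrak{g}_\SS(Q) = \bigcup_{\S}\bigcup_{x \in \P} G_{\S,x}(Q)$ packages the same content in one line). Where you genuinely add something is in part (i): the paper's Lemma~\ref{lem:A_in_g} simply \emph{asserts} that ``there exists $c \in \S$ such that $c = \vor[\S]{c+v}$'' without justification. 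You supply the missing construction by taking $c^*$ to maximize the support functional $c \mapsto \langle e, c\rangle$ over $\S$, and then the expansion $\|e + c^* - c'\|^2 - \|e\|^2 = 2\langle e, c^*-c'\rangle + \|c^*-c'\|^2 \ge 0$ cleanly certifies $e + c^* \in V_\S(c^*)$. This is a real improvement in rigor, at the modest cost of needing $\S$ compact (closed, by the paper's standing convention, plus bounded via Assumption~\ref{asm:S}(i)) so that the maximizer exists; the paper's bare assertion does not make this dependence visible. One small presentational remark: since the paper states the proposition before formally invoking Assumption~\ref{asm:S}, it would be worth flagging explicitly that your existence argument uses boundedness of $\S$, so the reader knows where that hypothesis enters.
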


To prove Proposition \ref{prop:inv_mult}, we need the following auxiliary results.

\begin{lemma} \label{lem:A_in_g}
For any $A \subseteq \reals^d$, we have that
\[
A \subseteq \mathfrak{g}_\S(A).
\]
\label{lem:Asubg}
%\vspace{-0.5cm}
\end{lemma}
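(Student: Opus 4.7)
The task is to show that each $a \in A$ can be written as $b - c$ with $c \in \S$ and $b \in (\P + A) \cap V_\S(c)$. The natural candidate is $b := a + c$: then $b \in \S + A \subseteq \P + A$ is automatic (since $\S \subseteq \P$ and $a \in A$), and $b - c = a$ holds by construction. Thus the entire lemma reduces to the purely geometric claim that for each fixed $a$ one can choose some $c \in \S$ with $a + c \in V_\S(c)$, i.e., $c$ is a closest point of $\S$ to $a + c$.

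My plan is to take $c$ to be a maximizer of the linear functional $c' \mapsto \langle a, c'\rangle$ on $\S$; such a point exists because $\S$ is compact under Assumption~\ref{asm:S}. To verify the Voronoi condition, I would expand, for any competing $c' \in \S$,
\[
\|(a+c) - c'\|^2 - \|a\|^2 \;=\; 2\langle a,\, c - c'\rangle + \|c - c'\|^2,
\]
and note that the first term on the right-hand side is nonnegative by the choice of $c$, while the second trivially is. Hence $\|(a+c) - c'\| \ge \|a\| = \|(a+c) - c\|$, which is precisely the statement $a + c \in V_\S(c)$.

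With this witness in hand, $b := a + c$ lies in both $\P + A$ and $V_\S(c)$, so $a = b - c \in \mathfrak{g}_\S(A)$; since $a \in A$ was arbitrary, $A \subseteq \mathfrak{g}_\S(A)$ follows. The only nontrivial step is producing the witness $c$: the support-direction maximizer trick works precisely because $\S$ is compact, and without boundedness the statement actually fails (e.g., for $\S = [0,\infty) \subset \reals$ and $A = \{1\}$ one computes $\mathfrak{g}_\S(A) = \{0\}$), so this is really where the geometric assumption on $\S$ enters.
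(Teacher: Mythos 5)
Your proof is correct and, in fact, more complete than the paper's own argument. The paper's proof simply asserts ``There exists $c \in \S$ such that $c = \vor[\S]{c + v}$'' without justification and then proceeds exactly as you do (take $x := c+v$, observe $x \in (\P+A)\cap V_\S(c)$, conclude $v = x - c \in \mathfrak{g}_\S(A)$). You supply the missing step: choosing $c$ to maximize $\langle a, \cdot\rangle$ over $\S$ and verifying
\[
\|(a+c)-c'\|^2 - \|(a+c)-c\|^2 = 2\langle a, c-c'\rangle + \|c-c'\|^2 \geq 0
\]
is a clean support-function argument showing that the translated Voronoi cells $\{V_\S(c)-c : c\in\S\}$ cover $\reals^d$, which is exactly the existence claim the paper takes for granted. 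You also correctly flag that this relies on compactness of $\S$: the lemma is stated for ``any $A \subseteq \reals^d$'' without an explicit hypothesis on $\S$, yet your counterexample $\S=[0,\infty)$, $A=\{1\}$ (where $\mathfrak{g}_\S(A)=\{0\}$) shows the statement is false without boundedness. In the paper's setting this is harmless — Assumption~\ref{asm:S} forces $\conv\S$ to be a bounded polytope and sets are closed by convention, so $\S$ is compact — but making the dependence explicit, as you do, is a genuine improvement in rigor.
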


\begin{proof}
Let $v \in A$. There exists $c \in \S$ such that $c = \vor{c + v}$. %In particular, $c$ is one of the vertices of $\conv{\S}$. 
Obviously, $c \in \conv \S$. 
Therefore, $v = (c + v) - c = x - \vor{x}$ for $x = c + v \in (\P + A)_{\{c\}}$ and $\vor{x} = c$, implying that $v \in \mathfrak{g}_\S(A)$.
\end{proof}

\begin{proposition} \label{prop:inv}
$A \subseteq \reals^d$ is $G$-invariant with respect to $\S$ if and only if 
\[
A = \mathfrak{g}_\S(A)
\]
\end{proposition}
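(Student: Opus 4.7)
The plan is to establish the equivalence by unfolding the definition of $\mathfrak{g}_\S$ and recognizing it as the union of all one-step images of $A$ under $G_{\S,x}$ as $x$ ranges over $\P = \conv \S$. Concretely, I will first show that
\[
\mathfrak{g}_\S(A) \;=\; \bigcup_{x \in \P} G_{\S,x}(A).
\]
To see this, pick any $v \in \mathfrak{g}_\S(A)$. By definition there exist $c \in \S$ and a point $z \in (\P + A)_{\{c\}} = (\P + A) \cap V_\S(c)$ with $v = z - c$. Writing $z = x + a$ for some $x \in \P$, $a \in A$, the condition $z \in V_\S(c)$ is exactly $c \in \arg\min_{\rho \in \S}\|\rho - z\|$, so $c = \vor[\S]{x+a}$ (up to the usual tie-breaking choice of the closest-point operator). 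Hence $v = x + a - \vor[\S]{x+a} = G_{\S,x}(a) \in G_{\S,x}(A)$. The reverse inclusion is symmetric: given $v = G_{\S,x}(a)$ with $x \in \P$ and $a \in A$, set $c = \vor[\S]{x+a} \in \S$ and $z = x + a \in \P + A$; then $z \in V_\S(c)$ by definition of the projection, so $z \in (\P+A)_{\{c\}}$ and $v = z - c \in \mathfrak{g}_\S(A)$.

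With this identity in hand, the equivalence is essentially bookkeeping. By \refdef{invariance}, $A$ is $G$-invariant with respect to $\S$ iff $G_{\S,x}(A) \subseteq A$ for every $x \in \P$, which in turn holds iff $\bigcup_{x \in \P} G_{\S,x}(A) \subseteq A$, i.e.\ iff $\mathfrak{g}_\S(A) \subseteq A$. Combining this with \reflem{A_in_g}, which unconditionally gives $A \subseteq \mathfrak{g}_\S(A)$, invariance of $A$ is equivalent to the two-sided inclusion $A = \mathfrak{g}_\S(A)$.

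The only genuinely non-trivial step is the identity $\mathfrak{g}_\S(A) = \bigcup_{x\in\P} G_{\S,x}(A)$; everything else is a direct consequence of the definitions and \reflem{A_in_g}. The one subtlety worth flagging in the write-up is the role of the choice in $\vor[\S]{\cdot}$ on points that lie on the boundary between two Voronoi cells: the definition of $\mathfrak{g}_\S$ ranges over \emph{all} $c \in \S$ such that $z \in V_\S(c)$, so it automatically accounts for every admissible selection of the closest-point operator. Once that point is noted, the proof is a couple of lines.
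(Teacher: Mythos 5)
Your proof is correct and follows essentially the same route as the paper: both arguments work by unpacking $\mathfrak{g}_\S(A)$ as exactly the set of one-step images $\{x-\vor[\S]{x} : x \in \conv\S + A\}$ and then invoke \reflem{A_in_g} for the automatic inclusion $A \subseteq \mathfrak{g}_\S(A)$. The only difference is presentational — you isolate the identity $\mathfrak{g}_\S(A) = \bigcup_{x\in\conv\S} G_{\S,x}(A)$ as an explicit intermediate step (and you flag the tie-breaking subtlety at Voronoi-cell boundaries more visibly), whereas the paper carries out the same element-by-element reasoning inline within the two directions of the equivalence.
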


\begin{proof}
Note that by Lemma \ref{lem:A_in_g}, we only need to consider the condition $A \supseteq \mathfrak{g}_\S(A)$ for the purpose of the proof.

$(\Rightarrow)$ Assume that $A \supseteq \mathfrak{g}_\S(A)$. Let $x \in \P + A$. Also, by the definition of Voronoi cells, $x \in V_\S(\vor{x})$. Thus,
$
x \in (\P + A)_{\{\vor{x}\}}
$
and
\[
x - \vor{x} \in (\P + A)_{\{\vor{x}\}} - \vor{x} \subseteq \mathfrak{g}_\S(A) \subseteq A,
\]
where the last set inclusion follows by the hypothesis. 
Hence, by Definition \ref{def:invariance}, $A$ is an invariant set.

$(\Leftarrow)$ Assume that $A$ is an invariant set. Also, assume by the way of contradiction that $A \not\supseteq \mathfrak{g}_\S(A)$. Namely, there exists $v \in \mathfrak{g}_\S(A)$ such that $v \notin A$. But every $v \in \mathfrak{g}_\S(A)$ can be written as $v = x - \vor{x}$ for some $x \in \P + A$. In other words, there exists $x \in  \P + A$ such that $x - \vor{x} \notin A$, a contradiction to Definition \ref{def:invariance}.
\end{proof}

\begin{proof}[Proof of Proposition \ref{prop:inv_mult}]
By Lemma \ref{lem:A_in_g}, $A \subseteq \mathfrak{g}_{\S} (A)$ for every $\S \in \SS$. %$k \in [K]$. 
Hence, $A \subseteq \mathfrak{g}_\SS (A)$ and part (i) of the proposition follows.

%By the virtue of Lemma \ref{lem:A_in_g_mult}, we can focus on the condition $A \supseteq \mathfrak{g}_\SS (A)$.
We next prove the two directions of part (ii).

$(\Rightarrow)$ Assume $A \supseteq \mathfrak{g}_\SS (A)$. Then, also $A \supseteq \mathfrak{g}_{\S} (A)$ for all $\S \in \SS$. %$k \in [K]$. 
Thus, by Proposition \ref{prop:inv}, $A$ is invariant for all $\S \in \SS$, %$k \in [K]$ 
hence invariant with respect to $\SS$.

$(\Leftarrow)$ Assume $A$ is invariant for all $\S \in \SS$. %$k \in [K]$. 
Then $A \supseteq \mathfrak{g}_{\S} (A)$ all $\S \in \SS$, %$k \in [K]$, 
therefore also $A \supseteq \bigcup_{\S \in \SS } \mathfrak{g}_{\S} (A) = \mathfrak{g}_\SS (A)$.
\end{proof}

\subsection{Minimal $G$-Invariant Set}

The next  result shows how to find minimal $G$-invariant sets.

\begin{theorem}[Minimal Invariant Set] \label{theo:min_G}
Let $A \subseteq \reals^d$. The iterates
\[
\mathfrak{g}^n_\SS (A) := \mathfrak{g}_\SS (\mathfrak{g}^{n-1}_\SS (A)) \quad \text{for } n\in \natnum, n \geq 1, \quad \mathfrak{g}^0_\SS (A) := A,
\]
are monotonic, in the sense that $\mathfrak{g}^n_\SS (A) \subseteq \mathfrak{g}^{n'}_\SS (A)$ for all $n \leq n'$, and the limit set %Thus, the limit \andy{put def of limit as infinite union}
\[
\mathfrak{g}^\infty_\SS (A) := \lim_{n \rightarrow \infty} \mathfrak{g}^n_\SS (A) = \bigcup_{n\geq0} \mathfrak{g}^n_\SS(A) %\vspace{-0.3cm}
\]
%exists. Moreover, $\mathfrak{g}^\infty_\SS (Q)$ 
is the \emph{minimal $G$-invariant set} w.r.t.~$\SS$  containing $A$.
\end{theorem}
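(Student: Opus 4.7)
The plan is to establish three things in sequence: monotonicity of the iterates (which also gives the claimed equality with the union), $G$-invariance of the limit set, and minimality.

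For monotonicity, the essential observation is that Proposition~\ref{prop:inv_mult}(i) supplies $B \subseteq \mathfrak{g}_\SS(B)$ for \emph{any} set $B \subseteq \reals^d$. Applied with $B = \mathfrak{g}^n_\SS(A)$ this gives $\mathfrak{g}^n_\SS(A) \subseteq \mathfrak{g}^{n+1}_\SS(A)$, and a trivial induction yields $\mathfrak{g}^n_\SS(A) \subseteq \mathfrak{g}^{n'}_\SS(A)$ for every $n \le n'$. Since an increasing sequence of sets has its union as its limit, the equality $\mathfrak{g}^\infty_\SS(A) = \bigcup_{n \geq 0} \mathfrak{g}^n_\SS(A)$ is then automatic.

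For invariance, by Proposition~\ref{prop:inv_mult}(ii) it suffices to show $\mathfrak{g}_\SS(\mathfrak{g}^\infty_\SS(A)) \subseteq \mathfrak{g}^\infty_\SS(A)$. The heart of the argument is that $\mathfrak{g}_\SS$ commutes with increasing unions. Unpacking Definition~\ref{def:g}, this relies on three ingredients: (a) the Minkowski sum distributes over unions (Proposition~\ref{prop:mink_union}), so $\P + \bigcup_n B_n = \bigcup_n (\P + B_n)$; (b) intersection with a fixed Voronoi cell $V_\S(c)$ distributes over unions, so the $(\cdot)_{\{c\}}$ operation does as well; and (c) the outer unions over $c \in \S$ and $\S \in \SS$, together with the translation by $-c$, trivially commute with the union over $n$. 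Chaining these gives $\mathfrak{g}_\SS(\mathfrak{g}^\infty_\SS(A)) = \bigcup_n \mathfrak{g}_\SS(\mathfrak{g}^n_\SS(A)) = \bigcup_n \mathfrak{g}^{n+1}_\SS(A) = \mathfrak{g}^\infty_\SS(A)$, and Proposition~\ref{prop:inv_mult}(ii) closes the case.

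For minimality, let $Q'$ be any $G$-invariant set containing $A$. A one-line check establishes that $\mathfrak{g}_\SS$ is monotone: if $B \subseteq C$ then $\P + B \subseteq \P + C$, and the section and translation operations preserve this inclusion. Combined with $\mathfrak{g}_\SS(Q') \subseteq Q'$ (Proposition~\ref{prop:inv_mult}(ii)), an induction on $n$ gives the claim: the base case $\mathfrak{g}^0_\SS(A) = A \subseteq Q'$ holds by hypothesis, and the step reads $\mathfrak{g}^{n+1}_\SS(A) = \mathfrak{g}_\SS(\mathfrak{g}^n_\SS(A)) \subseteq \mathfrak{g}_\SS(Q') \subseteq Q'$. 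Taking the union over $n$ yields $\mathfrak{g}^\infty_\SS(A) \subseteq Q'$. I do not anticipate a real obstacle at any stage; the only part requiring any attention is the commutativity of $\mathfrak{g}_\SS$ with directed unions, which is a routine verification resting on Proposition~\ref{prop:mink_union}.
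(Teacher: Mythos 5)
Your proof is correct and follows essentially the same route as the paper's: monotonicity from Proposition~\ref{prop:inv_mult}(i), invariance of the limit by passing $\mathfrak{g}_\SS$ inside the increasing union, and minimality by combining $\mathfrak{g}_\SS(Q')\subseteq Q'$ with monotonicity of $\mathfrak{g}_\SS$ and induction. The one small refinement is that you justify the commutation of $\mathfrak{g}_\SS$ with the \emph{countable} increasing union by unpacking Definition~\ref{def:g}, whereas the paper invokes the finite additivity of Lemma~\ref{lem:g_prop_mult}(ii) directly for the infinite union; your version is the more careful reading of that step.
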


We first need the following auxiliary results.

\begin{lemma}[Properties of $\mathfrak{g}_\SS$] \label{lem:g_prop_mult}
\begin{enumerate}
    \item[] 
    \item[(i)] (Monotonicity) If $A \subseteq B$ then $\mathfrak{g}_\SS (A) \subseteq \mathfrak{g}_\SS (B)$.
    \item[(ii)] (Additivity) $\mathfrak{g}_\SS (A \cup B) = \mathfrak{g}_\SS (A) \cup \mathfrak{g}_\SS (B)$.
\end{enumerate}
    
\end{lemma}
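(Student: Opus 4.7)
My plan is to prove the additivity statement (ii) first, and then obtain the monotonicity statement (i) as an immediate corollary. Indeed, if $A \subseteq B$, then $A \cup B = B$, so once (ii) is established we get $\mathfrak{g}_\SS(B) = \mathfrak{g}_\SS(A \cup B) = \mathfrak{g}_\SS(A) \cup \mathfrak{g}_\SS(B)$, which forces $\mathfrak{g}_\SS(A) \subseteq \mathfrak{g}_\SS(B)$. So the content of the lemma really lives in (ii).

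For (ii), the strategy is just to unpack the definition of $\mathfrak{g}_\SS$ and push the union $A \cup B$ outward through each of the three operations that appear: Minkowski sum with $\P$, intersection with the Voronoi cell $V_\S(c)$, and translation by $-c$ (which is itself a Minkowski sum with $\{-c\}$). First I would invoke \refprop{mink_union} to write $\P + (A \cup B) = (\P + A) \cup (\P + B)$. Intersecting both sides with $V_\S(c)$ and using the elementary fact that intersection distributes over union yields $\vsect{(\P + (A \cup B))} = \vsect{(\P + A)} \cup \vsect{(\P + B)}$. Applying \refprop{mink_union} once more, this time with the singleton $\{-c\}$, shows that translation by $-c$ distributes over this union. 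Taking the union over $c \in \S$ and swapping the order of the two unions then gives $\mathfrak{g}_\S(A \cup B) = \mathfrak{g}_\S(A) \cup \mathfrak{g}_\S(B)$ for every fixed $\S \in \SS$. Finally, taking the union over $\S \in \SS$ and again swapping the order of unions yields $\mathfrak{g}_\SS(A \cup B) = \mathfrak{g}_\SS(A) \cup \mathfrak{g}_\SS(B)$, as desired.

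There is not really a hard step here: every operation involved in $\mathfrak{g}_\SS$ commutes with (or respects) unions, so the proof is a direct unpacking. The only point that requires a little care, rather than mathematical depth, is bookkeeping: making sure that the double union over $c \in \S$ and $\S \in \SS$ is reordered correctly, and explicitly citing \refprop{mink_union} at both places where Minkowski sums meet unions so that the argument does not silently rely on anything that has not been stated. Once (ii) is written out in a couple of lines, (i) is a one-liner.
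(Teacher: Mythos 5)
Your proof is correct, and it takes a genuinely different route from the paper's. The paper first asserts monotonicity (i) as ``straightforward from the definition'', and then proves additivity (ii) in two halves: the inclusion $\mathfrak{g}_\S(A) \cup \mathfrak{g}_\S(B) \subseteq \mathfrak{g}_\S(A \cup B)$ follows from (i), while the reverse inclusion is shown by element-chasing --- taking $v \in \mathfrak{g}_\S(A \cup B)$, writing $v = x - \vor{x}$ with $x \in \P + (A \cup B)$, and using \refprop{mink_union} to split $x$ into the $A$-part or the $B$-part. You instead prove (ii) directly by a purely algebraic argument: every operation in the definition of $\mathfrak{g}_\S$ (Minkowski sum with $\P$, intersection with the Voronoi cell, translation by $-c$, the union over $c$, and the outer union over $\S$) distributes over finite unions, so $\mathfrak{g}_\SS(A\cup B) = \mathfrak{g}_\SS(A)\cup\mathfrak{g}_\SS(B)$ falls out of bookkeeping, and (i) then follows as a one-line corollary via $A \cup B = B$. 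Your route is slightly cleaner logically, since (i) is derived rather than asserted, and it makes the structural reason for additivity (distributivity of each constituent operation) explicit; the paper's route avoids the double-union reordering at the cost of a small element-chasing argument and of treating (i) as self-evident. Either is a perfectly acceptable proof.
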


\begin{proof}
We start by proving these properties for a single set $\S$, namely for $\mathfrak{g}_{\S}$. Property $(i)$ is straightforward by Definition \ref{def:g}. To prove (ii), first note that by the monotonicity property (i), we have that $\mathfrak{g}_\S(A) \subseteq \mathfrak{g}_\S(A \cup B)$ and $\mathfrak{g}_\S(B) \subseteq \mathfrak{g}_\S(A \cup B)$, hence $\mathfrak{g}_\S(A) \cup \mathfrak{g}_\S(B) \subseteq \mathfrak{g}_\S(A \cup B)$. For the other direction, let $v \in \mathfrak{g}_\S(A \cup B)$. Then, $v = x - \vor{x}$ for some $x \in \P + A \cup B = (\P + A) \cup (\P + B)$, where the last equality follows by \refprop{mink_union}. Therefore, $v \in \mathfrak{g}_\S(A)$ or $v \in \mathfrak{g}_\S(B)$. This implies that $\mathfrak{g}_\S(A) \cup \mathfrak{g}_\S(B) \supseteq \mathfrak{g}_\S(A \cup B)$ and completes the proof of property (ii).

The result then follows by using the definition of $\mathfrak{g}_\SS$ as the union of $\mathfrak{g}_{\S}$ over $\S \in \SS$.
\end{proof}

\iffalse
\begin{lemma}[Properties of $\mathfrak{g}_\S$] \label{lem:g_prop}
\begin{enumerate}
    \item[] 
    \item[(i)] (Monotonicity) If $A \subseteq B$ then $\mathfrak{g}_\S(A) \subseteq \mathfrak{g}_\S(B)$.
    \item[(ii)] (Additivity) $\mathfrak{g}_\S(A \cup B) = \mathfrak{g}_\S(A) \cup \mathfrak{g}_\S(B)$.
\end{enumerate}
\end{lemma}

\begin{proof}
Property $(i)$ is straightforward. To prove (ii), first note that by the monotonicity property (i), we have that $\mathfrak{g}_\S(A) \subseteq \mathfrak{g}_\S(A \cup B)$ and $\mathfrak{g}_\S(B) \subseteq \mathfrak{g}_\S(A \cup B)$, hence $\mathfrak{g}_\S(A) \cup \mathfrak{g}_\S(B) \subseteq \mathfrak{g}_\S(A \cup B)$. For the other direction, let $v \in \mathfrak{g}_\S(A \cup B)$. Then, $v = x - \vor{x}$ for some $x \in \P + A \cup B = (\P + A) \cup (\P + B)$, where the last equality follows by \refprop{mink_union}. Therefore, $v \in \mathfrak{g}_\S(A)$ or $v \in \mathfrak{g}_\S(B)$. This implies that $\mathfrak{g}_\S(A) \cup \mathfrak{g}_\S(B) \supseteq \mathfrak{g}_\S(A \cup B)$ and completes the proof of the Lemma.
\end{proof}
\fi

%\vspace{-0.5cm}

\begin{proposition}[Monotonicity of the iterates] \label{prop:mono_mult}
Let $A \subseteq \reals^d$. Then the iterates $\mathfrak{g}^n_\SS (A)$ are monotonic, in the sense that $\mathfrak{g}^n_\SS (A) \subseteq \mathfrak{g}^{n'}_\SS (A)$ for all $n \leq n'$. Thus, there exists
\[
\mathfrak{g}^\infty_\SS (A) := \lim_{n \rightarrow \infty} \mathfrak{g}^n_\SS (A) = \bigcup_{n\geq0} \mathfrak{g}^n_\SS (A). %\vspace{-0.3cm}
\]
\end{proposition}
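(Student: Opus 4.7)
The plan is to prove the monotonicity statement by a one-line induction on $n$ that leverages exactly two facts already in hand: (a) $Q \subseteq \mathfrak{g}_\SS(Q)$ for every set $Q$, which is Proposition~\ref{prop:inv_mult}(i); and (b) the monotonicity of $\mathfrak{g}_\SS$ itself, Lemma~\ref{lem:g_prop_mult}(i). Given these, the existence of the limit is automatic: an increasing chain of subsets of $\reals^d$ always has a set-theoretic limit equal to its union.

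More concretely, I would first establish the single-step inclusion $\mathfrak{g}^n_\SS(A) \subseteq \mathfrak{g}^{n+1}_\SS(A)$ by induction on $n$. For the base case $n=0$, we have $\mathfrak{g}^0_\SS(A) = A \subseteq \mathfrak{g}_\SS(A) = \mathfrak{g}^1_\SS(A)$ by Proposition~\ref{prop:inv_mult}(i). For the inductive step, assume $\mathfrak{g}^{n-1}_\SS(A) \subseteq \mathfrak{g}^n_\SS(A)$; applying $\mathfrak{g}_\SS$ to both sides and invoking Lemma~\ref{lem:g_prop_mult}(i) yields $\mathfrak{g}^n_\SS(A) = \mathfrak{g}_\SS(\mathfrak{g}^{n-1}_\SS(A)) \subseteq \mathfrak{g}_\SS(\mathfrak{g}^n_\SS(A)) = \mathfrak{g}^{n+1}_\SS(A)$, as required. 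Chaining single-step inclusions then gives $\mathfrak{g}^n_\SS(A) \subseteq \mathfrak{g}^{n'}_\SS(A)$ for all $n \leq n'$.

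For the second part, because $\{\mathfrak{g}^n_\SS(A)\}_{n \geq 0}$ is an increasing chain of subsets of $\reals^d$, the pointwise limit (in the sense of $\liminf$ and $\limsup$ of sets coinciding) exists and equals the union: writing $\mathfrak{g}^\infty_\SS(A) := \bigcup_{n \geq 0} \mathfrak{g}^n_\SS(A)$, every point of this union belongs to $\mathfrak{g}^n_\SS(A)$ for some $n$ and hence, by monotonicity, to every $\mathfrak{g}^{n'}_\SS(A)$ with $n' \geq n$. This justifies the notation $\lim_{n \to \infty} \mathfrak{g}^n_\SS(A)$.

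There is no real obstacle in this proof; it is structurally routine. The only thing to be slightly careful about is not conflating the limit set with a topological closure or a Hausdorff limit: here ``limit'' simply means the union of the increasing chain, which is what both the statement and the downstream use in Theorem~\ref{theo:min_G} require.
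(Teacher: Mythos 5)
Your proof is correct and follows exactly the paper's route: the paper's own (one-line) proof also cites Lemma~\ref{lem:g_prop_mult}(i) and Proposition~\ref{prop:inv_mult}(i) and declares the result a direct consequence; you have simply spelled out the induction and the set-theoretic limit that the paper leaves implicit.
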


\begin{proof}
This proposition is a direct consequence of Lemma \ref{lem:g_prop_mult} (i) and \refprop{inv_mult} (i).  
\end{proof}

\iffalse

\begin{definition}[Iterates of $\mathfrak{g}_\S(A)$] \label{def:g_iter}
In the setting of Definition \ref{def:g}, we let
\[
\mathfrak{g}^0_\S(A) := A
\]
and
\[
\mathfrak{g}^n_\S(A) := \mathfrak{g}_\S(\mathfrak{g}^{n-1}_\S(A)) \quad \text{for } n\in \natnum, n \geq 1.
\]
%We also set
%\[
%\mathcal{R}^N_I(A) = \bigcup_{n=0}^N \mathfrak{g}^n_I(A)
%\]
%and
%\[
%\mathcal{R}^\infty_I (A) = \lim_{N\rightarrow \infty}  \mathcal{R}^N_I(A).
%\]
\end{definition}

\begin{proposition}[Monotonicity of the iterates] \label{prop:mono}
%Let $A \subseteq \reals^d$ be such that $A \subseteq \mathfrak{g}_I(A)$. Then 
%\[
%\mathcal{R}^\infty_I (A) = \mathfrak{g}^\infty_I(A) := \lim_{n \rightarrow \infty} %\mathfrak{g}^n_I(A).
%\]
%In particular, the origin $A = \{0\}$ satisfies $A \subseteq \mathfrak{g}_I(A)$.
Let $A \subseteq \reals^d$. Then the iterates $\mathfrak{g}^n_\S(A)$ are monotonic, in the sense that $\mathfrak{g}^n_\S(A) \subseteq \mathfrak{g}^{n'}_\S(A)$ for all $n \leq n'$. Thus, there exists
\[
\mathfrak{g}^\infty_\S(A) := \lim_{n \rightarrow \infty} \mathfrak{g}^n_\S(A) = \bigcup_{n\geq0} \mathfrak{g}^n_\S(A).
\]
\end{proposition}

\begin{proof}
This proposition is a direct consequence of Lemma \ref{lem:g_prop} (i) and Lemma \ref{lem:A_in_g}.  
\end{proof}
\fi

\begin{proof}[Proof of Theorem \ref{theo:min_G}]
We first prove the invariance of $\mathfrak{g}^\infty_\SS(A)$. We have that
%\vspace{-0.3cm}
\begin{eqnarray*}
\mathfrak{g}_\SS( \mathfrak{g}^\infty_\SS(A)) &=& \mathfrak{g}_\SS\Bigg( \bigcup_{n \geq 0}  \mathfrak{g}^n_\SS(A) \Bigg) 
= \bigcup_{n \geq 0} \mathfrak{g}_\SS\left(\mathfrak{g}^n_\SS(A) \right) \\
&=& \bigcup_{n \geq 1} \mathfrak{g}^n_\SS(A) 
= A \cup \bigcup_{n \geq 1} \mathfrak{g}^n_\SS(A) = \mathfrak{g}^\infty_\SS(A), 
\end{eqnarray*}
where the first equality follows by Proposition \ref{prop:mono_mult}, the second equality follows by Lemma \ref{lem:g_prop_mult} (ii), and the last equality follows by Proposition \ref{prop:inv_mult} (i) as $A \subseteq \mathfrak{g}_\SS(A)$. Thus, by Proposition \ref{prop:inv_mult},  $\mathfrak{g}^\infty_\SS(A)$ is invariant. It also contains $A$ by its definition.

To prove minimality, let $Q$ be any invariant set containing $A$. Then
$
Q \supseteq \mathfrak{g}_\SS( Q ) \supseteq \mathfrak{g}_\SS( A ),
$
where the first inclusion follows by Proposition \ref{prop:inv_mult} and the second one follows by the monotonicity property (Lemma \ref{lem:g_prop_mult} (i)). Applying these rules recursively, we obtain that
$
Q \supseteq \mathfrak{g}_\SS^n( A ), \, n \geq 0.
$
This implies that
$
Q \supseteq \cup_{n=0}^N \mathfrak{g}^n_\SS(A), \, N \geq 0
$
and hence
$
Q \supseteq \mathfrak{g}^\infty_\SS(A).
$
Therefore $\mathfrak{g}^\infty_\SS(A)$ is contained in any invariant set that contains $A$. This completes the proof of the Theorem.
\end{proof}

%\vspace{-0.2cm}

\subsection{Boundedness} %of the Minimal $G$-Invariant Set} 
\label{sec:undelayed_bounded}

%\vspace{-0.2cm}

The following result provides conditions on the collection $\SS$ that guarantee the boundedness of the set defined by Theorem \ref{theo:min_G}. In particular, these conditions are formulated in Assumption \ref{asm:S}. Part (iii) of this assumption can be now formally stated.
For any $\S \in \SS$, consider a partition of $\S$ imposed by $\conv S$ into two disjoint sets: 
%\begin{enumerate}
    
    \noindent $\cor (\S)$: the ``corner points'' of $\S$, namely $\cor (\S) \subseteq \S$ such that $\cor (\S) \subseteq \partial \, \conv{S}$.
    
    \noindent $\inn (\S)$: the ``inner points'' of $\S$, namely $\inn (\S) \subseteq \S$ such that $\inn (\S) \not\subseteq \partial \, \conv{S}$.
    
%\end{enumerate}
Note that in this definition, we include in the corner points also any points in $\S$ that lie on the edges of $\conv{\S}$. It is easy to see that the Voronoi cells associated with $\inn (\S)$ are bounded, while the Voronoi cells associated with $\cor (\S)$ are unbounded. Moreover, $\conv{\S} = \conv{\cor (\S)}$. Now, part (iii) of Assumption \ref{asm:S} can be stated as
%\vspace{-0.2cm}
\begin{equation} \label{asm:partIII}
\sup_{\S \in \SS} \sup_{c \in \inn (\S)} \diam (V_{\S}(c)) < \infty. %\vspace{-0.2cm}
\end{equation}
\begin{theorem}[Bounded Invariant Set] \label{theo:bound_G}
Under Assumption \ref{asm:S}, the minimal $G$-invariant set with respect to $\SS$ that contains the origin is \emph{bounded}. 
\end{theorem}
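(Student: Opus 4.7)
The plan is to exhibit a bounded set $B\subseteq\reals^d$ that is $G$-invariant with respect to $\SS$ and contains the origin; by the minimality part of Theorem~\ref{theo:min_G}, this immediately yields $\mathfrak{g}^\infty_\SS(\{0\})\subseteq B$ and hence the boundedness claim. The basic decomposition I would work with splits $\mathfrak{g}_\S(Q)$ according to whether the nearest point $c$ is an inner or a corner point of $\S$:
\[
\mathfrak{g}_\S(Q) \;=\; \bigcup_{c\in\inn(\S)}\!\!\bigl((\P+Q)\cap V_\S(c)\bigr)-c \;\;\cup\;\; \bigcup_{c\in\cor(\S)}\!\!\bigl((\P+Q)\cap V_\S(c)\bigr)-c,
\]
and then takes the union over $\SS$. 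The two resulting pieces are governed by qualitatively different geometry and must be handled separately.

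The inner piece is controlled directly by Assumption~\ref{asm:S}(iii): since $c\in V_\S(c)$, every element of $V_\S(c)-c$ has norm at most $\diam V_\S(c)$, so the inner contribution sits inside the closed ball of radius $B_{\inn}:=\sup_{\S\in\SS}\sup_{c\in\inn(\S)}\diam V_\S(c)$, which is finite by \eqref{asm:partIII} and independent of $Q$. The corner piece is the genuine obstacle because $V_\S(c)-c$ is an unbounded polyhedral cone. The geometric fact I would exploit is that at a corner $c\in\partial\,\conv{\S}$ the translated cell $V_\S(c)-c$ is an outward-pointing cone whose bounding hyperplanes have normals drawn from the finite set $\mathcal N$, while the tangent cone $\conv{\S}-c$ lies on the opposite sides of these very hyperplanes. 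Combining this with the polytope-size bound of Assumption~\ref{asm:S}(i) and the finiteness of $\mathcal N$ from (ii), I would follow the \cite{adler2005,tresser2007}-style recipe and take
\[
B \;:=\; \bigcap_{n\in\mathcal N}\bigl\{\,v\in\reals^d : \langle v,n\rangle \le M\,\bigr\}
\]
for a single constant $M$ chosen to dominate both $B_{\inn}$ and the uniform upper bound on $\diam\,\conv\S$, $\S\in\SS$.

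The step I expect to be the main obstacle is the verification $\mathfrak{g}_\SS(B)\subseteq B$ on the corner contributions. For every $\S\in\SS$ and every $c\in\cor(\S)$, one must project $(\P+B)-c$ onto each $n\in\mathcal N$ and show the image is capped by $M$: on the outward normals at $c$ the cap will come from the defining half-space of the Voronoi cone $V_\S(c)-c$, while on the remaining normals it will come from the a priori width of $B$ along $n$ plus the bounded diameter of $\P$. The delicate point is to choose a single $M$ such that these two mechanisms close up simultaneously across \emph{every} $\S\in\SS$ and \emph{every} corner $c\in\cor(\S)$; this is precisely where Assumption~\ref{asm:S}(i)--(ii) is essential, and where the case analysis, generalizing the polytope-corners setting of \cite{adler2005,tresser2007} to admit arbitrary non-convex $\S$ (so that both types of generators coexist), will concentrate.
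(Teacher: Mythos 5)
The overall strategy is sound and matches the paper's: exhibit a bounded $G$-invariant set $B$ containing the origin, then invoke minimality from Theorem~\ref{theo:min_G}. You also correctly identify the corner/inner decomposition of $\mathfrak{g}_\S(Q)$ and correctly dispose of the inner piece via Assumption~\ref{asm:S}(iii). However, your treatment of the corner piece contains a genuine gap.

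The geometric claim on which your corner-case argument rests---namely that, for a corner $c\in\partial\,\conv\S$, the translated Voronoi cell $V_\S(c)-c$ is a cone whose bounding hyperplanes have normals drawn from the face-normal set $\mathcal N$---is false. The bounding hyperplanes of a Voronoi cell are perpendicular bisectors of segments $cc'$, $c'\in\S$, so their normals are the difference vectors $c-c'$, not the face normals of $\conv\S$. (Take the triangle with vertices $(0,0)$, $(1,0)$, $(0,1)$: the bisector normals are $(1,0)$, $(0,1)$, $(1,-1)$, none of which is an outward face normal of the triangle.) Consequently the ``cap from the defining half-space of the Voronoi cone'' that you rely on to verify $\mathfrak{g}_\SS(B)\subseteq B$ along a direction $n\in\mathcal N$ simply is not available in the form you describe. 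Since you flag this very step as the crux, the proposal as written does not close.

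The paper sidesteps this by not reconstructing the invariant set for the corner-points sub-collection at all: it cites Theorem~2.1 of \cite{tresser2007} to obtain a bounded \emph{convex} set $Q_r$, invariant for $\cor(\SS)$ and containing a ball of radius $r$ for every sufficiently large $r$, and then reduces the general case to this one using \reflem{vor_Sc} ($V_\S(c)\subseteq V_{\cor(\S)}(c)$ for $c\in\cor(\S)$). This lemma gives immediately that the corner contribution of $\mathfrak{g}_\S(Q_r)$ sits inside $\mathfrak{g}_{\cor(\S)}(Q_r)=Q_r$, while the inner contribution (your ball of radius $B_{\inn}$) is swallowed by $\mathfrak{g}_{\cor(\S)}(Q_r)$ once $r$ is large, because the latter grows to all of $\reals^d$ as $r\to\infty$. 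If you want to salvage a direct construction in the spirit of your $B=\bigcap_{n\in\mathcal N}\{v:\langle v,n\rangle\le M\}$, you would essentially have to re-derive Theorem~2.1 of \cite{tresser2007} yourself; the correct mechanism there is not a containment of the Voronoi cone in $\mathcal N$-half-spaces, but a contraction argument along each face-normal direction that exploits the greedy choice and Assumptions~\ref{asm:S}(i)--(ii). As it stands, the cleaner route is the reduction via \reflem{vor_Sc}, which your proposal omits.
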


%To prove this theorem, we use the results from \cite{adler2005, tresser2007}, where it was proven that there exists a convex bounded invariant set that contains the origin in the special case where a set $\S$ is the collection of \emph{corner points} (vertexes) of $\conv \S$; in other words, when $\S = \cor(\S)$. 

To prove this theorem, we need the following basic result about the Voronoi cells of $\S$ and $\cor(\S)$.

\begin{lemma} \label{lem:vor_Sc}
We have that $V_\S(c) \subseteq V_{\cor(\S)}(c)$ for all $c \in \cor(\S)$.
\end{lemma}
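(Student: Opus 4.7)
The plan is to observe that the statement is an immediate consequence of the fact that a Voronoi cell is defined by a collection of half-space constraints, and if one shrinks the defining set, one simply removes constraints, which can only enlarge the cell.

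More concretely, I would first note that since $\cor(\S) \subseteq \S$ and $c \in \cor(\S)$, the cell $V_{\cor(\S)}(c)$ is well-defined. Then I take an arbitrary $x \in V_\S(c)$. By the definition of the Voronoi cell recalled in the Preliminaries,
\[
\|x - c\| \leq \|x - c'\| \quad \text{for all } c' \in \S.
\]
Restricting the universal quantifier from $\S$ to the subset $\cor(\S) \subseteq \S$, this in particular yields
\[
\|x - c\| \leq \|x - c'\| \quad \text{for all } c' \in \cor(\S),
\]
which is exactly the condition defining $V_{\cor(\S)}(c)$. Hence $x \in V_{\cor(\S)}(c)$, and the inclusion follows.

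There is essentially no obstacle here: the argument is a one-line set-theoretic observation that weakening the constraint system (by dropping the half-space constraints contributed by the inner points $\inn(\S)$) can only enlarge the admissible region. The only thing worth flagging is that the inclusion is generally strict whenever $c$ has an inner-point neighbor in $\S$, which is exactly why this lemma will be useful later: it lets one upper-bound $V_\S(c)$ for corner points $c$ by the unbounded Voronoi cell of the purely polytopal configuration $\cor(\S)$, whose geometry is controlled by the face-normal set $\mathcal N$ of Assumption~\ref{asm:S}(ii).
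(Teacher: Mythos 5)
Your proof is correct and takes essentially the same approach as the paper's: pick $x \in V_\S(c)$, observe that the defining inequalities hold for all $c' \in \S$ and hence in particular for all $c' \in \cor(\S) \subseteq \S$, and conclude $x \in V_{\cor(\S)}(c)$. The additional remarks about strictness and the role of Assumption~\ref{asm:S}(ii) are accurate but not part of the lemma's proof.
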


\begin{proof}
Let $c \in \cor(\S)$ and $x \in V_\S(c)$. By the definition of the Voronoi cell
\[
\|x - c\| \leq \|x - c'\|, \quad \forall c' \in \S.
\]
However, since $\cor(\S) \subseteq \S$, it is also true that
\[
\|x - c\| \leq \|x - c'\|, \quad \forall c' \in \cor(\S),
\]
implying that $x \in V_{\cor(\S)}(c)$.
\end{proof}
%\vspace{-1cm}
\begin{proof}[Proof of Theorem \ref{theo:bound_G}]

In \cite{adler2005, tresser2007}, it was shown that for the collection $\cor(\SS) := \{\cor(\S), \S \in \SS\}$, under the hypothesis of Theorem \ref{theo:bound_G},  for any $r \geq 0$ large enough, one can construct a \emph{bounded convex} invariant set $Q_r$ which contains a ball with radius $r$ centered in the origin (Theorem 2.1 in \cite{tresser2007}). That is
\begin{equation} \label{eqn:inv_Sc}
\forall \S \in \SS, \quad \mathfrak{g}_{\cor(\S)}(Q_r) = Q_r.
\end{equation}
We next extend this proof to the case where $\inn(\S) \neq \emptyset$.

By the definition of $\mathfrak{g}$, we have that 
%\begin{eqnarray*}
\begin{align*}
\mathfrak{g}_{\S}(Q_r) &=  \bigcup_{c \in \S } (\P+Q_r) \cap V_{\S}(c) -c \\
&= \left [ \bigcup_{c \in \cor(\S) } (\P+Q_r) \cap V_{\S}(c) -c \right ] \\
&\phantom{=} \bigcup \left [ \bigcup_{c \in \inn(\S) } (\P+Q_r) \cap V_{\S}(c) -c \right ] \\
&\subseteq \left [ \bigcup_{c \in \cor(\S) } (\P+Q_r) \cap V_{\cor(\S)}(c) -c \right ] \\
&\phantom{=}\bigcup \left [ \bigcup_{c \in \inn(\S) } (\P+Q_r) \cap V_{\S}(c) -c \right ] \\
&= \mathfrak{g}_{\cor(\S)}(Q_r) \bigcup \left [ \bigcup_{c \in \inn(\S) } (\P+Q_r) \cap V_{\S}(c) -c \right ], 
\end{align*}
%\end{eqnarray*}
where the set inclusion follows by \reflem{vor_Sc}. 
Now observe that there exists $r_0 < \infty$ such that for all $r \geq r_0$,
\[
(\P+Q_r) \cap V_{\S}(c)  = V_{\S}(c), \quad c \in \inn(\S)
\]
as the Voronoi cells for $c \in \inn(\S)$ are bounded. Thus, from the uniform boundedness of bounded Voronoi cells \eqref{asm:partIII}, for $r \geq r_0$, 
\[
\bigcup_{c \in \inn(\S) } (\P+Q_r) \cap V_{\S}(c) -c
\]
is a bounded set. Moreover, since the Voronoi cells for $c \in \S_{c}$ are unbounded, 
\[
\lim_{r \rightarrow \infty} \bigcup_{c \in \cor(\S) } (\P+B(0, r)) \cap V_{\cor(\S)}(c) -c = \reals^d,
\]
where $B(0,r)$ denotes a ball of radius $r$ centered at the origin.
Hence, there exists $r_1 \geq r_0$, such that for all $r \geq r_1$
%\begin{eqnarray}
  \begin{align}
\nonumber    \bigcup_{c \in \inn(\S) } & (\P  +Q_r) \cap V_{\S}(c) -c \\
\nonumber  &\subseteq \bigcup_{c \in \cor(\S) } (\P+B(0, r)) \cap V_{\cor(\S)}(c) -c  \\
\nonumber  &\subseteq \bigcup_{c \in \cor(\S) } (\P+Q_r) \cap V_{\cor(\S)}(c) -c  \\
\label{eqn:unbounded_domin}
&=\mathfrak{g}_{\cor(\S)}(Q_r) . 
%\end{eqnarray}
\end{align}

By the uniform boundedness of $V_\S(c)$ for $c\in \inn(\S)$ and $\S \in \SS$ \eqref{asm:partIII}, it follows that there exists $r_1^* < \infty$ such that \eqref{eqn:unbounded_domin} holds for all $r \geq r_1^*$ and all $\S \in \SS$.
Thus, for all $\S \in \SS$, 
 $\mathfrak{g}_{\S}(Q_r) \subseteq \mathfrak{g}_{\cor(\S)}(Q_r) = Q_r$, where the last equality follows by the invariance of $Q_r$ for $\cor(\SS)$ \eqref{eqn:inv_Sc}. Therefore,  for $r \geq r_1^*$,
\[
\mathfrak{g}_{\SS}(Q_r) = \bigcup_{\S \in \SS} \mathfrak{g}_{\S}(Q_r) \subseteq Q_r,
\]

%\vspace{-0.2cm}

\noindent implying that $Q_r$ is a $G$-invariant set with respect to $\SS$ by \refprop{inv_mult}. 
Since $Q_r$ is a bounded convex set, the result of  Theorem \ref{theo:bound_G}  follows.
\end{proof}

%\subsection{The Minimal $G$-Invariant Set is Star-Convex and Contains All Bounded Voronoi Cells}

\subsection{Other Properties of $G$-Invariant Sets}

\begin{definition}[\cite{moszynska2006}]
  We say that a set $A\subseteq \reals^d$ is \emph{star-convex} in $x \in A$ %\reals^d$
if for all $\alpha \in A$, the line segment connecting $x$ and $\alpha$ is contained in $A$.
The \emph{kernel} of $A$ is the set of all points $z \in A$ such that $A$ is star-convex in $z$.
\end{definition}
\begin{theorem} \label{thm:starconvex}
Let $A \subseteq \reals^d $ be  star-convex in the origin.
Let $\S$ and $\SS$ be, respectively, a subset and a collection of subsets of $\reals^d$.
Then, $\mathfrak{g}_\S(A)$ and $\mathfrak{g}_\SS(A)$ are star-convex in the origin.
\end{theorem}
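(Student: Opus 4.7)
The plan is to show star-convexity at the origin directly: for an arbitrary $v \in \mathfrak{g}_\S(A)$ and $\lambda \in [0,1]$, exhibit a witness that places $\lambda v$ in $\mathfrak{g}_\S(A)$. The origin itself lies in $\mathfrak{g}_\S(A)$ because star-convexity of $A$ at $0$ forces $0 \in A$, and \reflem{A_in_g} gives $A \subseteq \mathfrak{g}_\S(A)$.

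For the main step, unfold the definition: $v = x - c$ for some $c \in \S$ and some $x \in (\P + A) \cap V_\S(c)$, and write $x = p + a$ with $p \in \P$ and $a \in A$. The natural candidate for the $\lambda$-scaled witness is
\[
x' \defas (1-\lambda) c + \lambda x,
\]
since then $x' - c = \lambda(x - c) = \lambda v$. I would then verify the two membership conditions separately. First, $x' \in V_\S(c)$ follows from the fact that Voronoi cells are convex (they are intersections of closed half-spaces), together with $c \in V_\S(c)$ trivially and $x \in V_\S(c)$ by assumption, so the convex combination $x'$ remains in $V_\S(c)$. Second, $x' \in \P + A$ follows by regrouping
\[
x' = \bigl((1-\lambda) c + \lambda p\bigr) + \lambda a.
\]
Since $c \in \S \subseteq \P$ and $p \in \P$, convexity of $\P$ yields $(1-\lambda)c + \lambda p \in \P$; since $A$ is star-convex at the origin and $a \in A$, the segment from $0$ to $a$ lies in $A$, so $\lambda a \in A$. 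This gives $x' \in \P + A$, hence $x' \in (\P + A) \cap V_\S(c)$, and therefore $\lambda v = x' - c \in \mathfrak{g}_\S(A)$.

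The extension to $\mathfrak{g}_\SS(A)$ is essentially immediate from its definition as a union: if $v \in \mathfrak{g}_\SS(A)$ then $v \in \mathfrak{g}_\S(A)$ for some $\S \in \SS$, whence $\lambda v \in \mathfrak{g}_\S(A) \subseteq \mathfrak{g}_\SS(A)$ by the single-set argument; and $0 \in \mathfrak{g}_\SS(A)$ by the same reasoning as above.

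There is no real obstacle here; the one insight that makes everything click is recognizing that both $c$ and $x$ lie in the convex Voronoi cell $V_\S(c)$, so convex combinations of them stay there, while on the $\P + A$ side one combines convexity of $\P$ (to move $c$ towards $p$) with the star-convexity hypothesis on $A$ (to shrink $a$ towards $0$). The fact that $c$ plays a double role — as the Voronoi center we are subtracting and as an element of $\P$ used to absorb the $(1-\lambda)$-fraction — is the only slightly nontrivial point.
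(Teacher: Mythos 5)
Your proof is correct, and it rests on the same key insight as the paper's: $(\P + A)_{\{c\}}$ is star-convex at $c$ because $\P$ and $V_\S(c)$ are convex while $A$ is star-convex at the origin, and translating by $-c$ and taking the union preserves star-convexity at the origin. The only difference is stylistic — the paper invokes \refprop{conv_is_starconv}, \refprop{starconv_intersection}, \refprop{starconv_minkowski}, and \refprop{starconv_union} as black boxes, whereas you unfold exactly the same reasoning into a direct witness construction ($x' = (1-\lambda)c + \lambda x$) — so the argument is essentially the same, just more self-contained.
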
 
\begin{corollary} 
  The minimal $G$-invariant set (with respect to $\S$ respectively $\SS$) that contains the origin is star-convex.
\end{corollary}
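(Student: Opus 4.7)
The plan is to prove star-convexity in the origin for $\mathfrak{g}_\S(A)$ first, and then derive the statement for $\mathfrak{g}_\SS(A)$ by taking unions. Fix an arbitrary $v \in \mathfrak{g}_\S(A)$ and $\lambda \in [0,1]$; the goal is to show $\lambda v \in \mathfrak{g}_\S(A)$. By Definition~\ref{def:g}, we can write $v = x - c$ for some $c \in \S$ and some $x \in (\P+A) \cap V_\S(c)$, and further decompose $x = p + a$ with $p \in \P$ and $a \in A$.

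The key idea is to exhibit $\lambda v$ as a suitable difference $x' - c$ with the \emph{same} base point $c \in \S$. The natural candidate is
\[
x' \defas (1-\lambda)c + \lambda x = c + \lambda(x-c),
\]
so that $x' - c = \lambda v$ by construction. It then remains to verify that $x' \in (\P+A)\cap V_\S(c)$. For the membership in $\P+A$, write
\[
x' = \bigl[(1-\lambda)c + \lambda p\bigr] + \lambda a;
\]
the bracketed term lies in $\P$ since $c \in \S \subseteq \P$, $p \in \P$, and $\P$ is convex, while $\lambda a \in A$ by the hypothesis that $A$ is star-convex in the origin.

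For the membership in the Voronoi cell, the crucial observation is that $V_\S(c)$ is itself a convex set: by definition it equals $\bigcap_{c'\in\S} \{x \in \reals^d : \|x-c\| \le \|x-c'\|\}$, an intersection of closed half-spaces. Since $c \in V_\S(c)$ trivially and $x \in V_\S(c)$ by assumption, the convex combination $x'$ also lies in $V_\S(c)$. This establishes $\lambda v \in \mathfrak{g}_\S(A)$ and hence star-convexity of $\mathfrak{g}_\S(A)$ in the origin.

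Finally, to extend to $\mathfrak{g}_\SS(A) = \bigcup_{\S \in \SS} \mathfrak{g}_\S(A)$, take any $v$ in this union; it belongs to some $\mathfrak{g}_\S(A)$, and the line segment from $\boldsymbol{0}$ to $v$ is contained in that same $\mathfrak{g}_\S(A)$, hence in the union. The corollary is immediate: by Theorem~\ref{theo:min_G} the minimal $G$-invariant set containing $\boldsymbol{0}$ is $\mathfrak{g}^\infty_\SS(\{\boldsymbol{0}\})$, obtained by iterating $\mathfrak{g}_\SS$ starting from $\{\boldsymbol{0}\}$, which is trivially star-convex in the origin; star-convexity is preserved at each iterate by the theorem, and a nested union of sets star-convex in the origin is itself star-convex in the origin. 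I do not expect a serious obstacle here; the only subtle point is remembering that Voronoi cells (relative to arbitrary closed sets $\S$) are convex, which is what makes the same $c$ work as a closest point for both $x$ and the shrunken point $x'$.
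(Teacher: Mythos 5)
Your proof is correct, and it proves both \refthm{starconvex} and the corollary. Where it differs from the paper is in how \refthm{starconvex} itself is established: the paper deduces star-convexity of $\mathfrak{g}_\S(A)$ abstractly, by chaining together general preservation lemmas — \refprop{conv_is_starconv} (convex sets are star-convex everywhere), \refprop{starconv_minkowski} (Minkowski sums of star-convex sets are star-convex, with additive kernels, a result cited from the literature), and \refprop{starconv_intersection}/\refprop{starconv_union} (intersections and unions of sets sharing a common star-center). You instead give a direct elementwise argument: write $v = x - c$ with $x\in(\P+A)\cap V_\S(c)$, set $x' = c + \lambda(x-c)$, and check that $x'$ stays in $\P+A$ (using convexity of $\P$, the fact that $c\in\P$, and star-convexity of $A$ in the origin) and in $V_\S(c)$ (using convexity of the Voronoi cell and $c\in V_\S(c)$). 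In effect you prove, in the special case needed, the Minkowski-sum preservation fact the paper outsources to \refprop{starconv_minkowski}. Your route is self-contained and avoids the external reference, which is a genuine advantage; the paper's route is more modular and reuses named lemmas. The passage from \refthm{starconvex} to the corollary (iterating from $\{\boldsymbol{0}\}$, preservation at each step, closure under the nested union $\mathfrak{g}^\infty_\SS$) is the same in both and is handled correctly by you.
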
 
To prove \refthm{starconvex}, we will need the following results, which are well-known hence stated without proofs.
\begin{proposition} 
Any non-empty convex set $A\subseteq \reals^d $ is star-convex in any point $x\in A$. \label{prop:conv_is_starconv}
\end{proposition}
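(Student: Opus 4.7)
The plan is to verify the statement directly from the two definitions involved. Fix an arbitrary non-empty convex set $A \subseteq \reals^d$ and pick any point $x \in A$; I must show that for every $\alpha \in A$ the segment joining $x$ and $\alpha$ is contained in $A$. Parametrize the segment as $\{tx + (1-t)\alpha : t \in [0,1]\}$. Since both $x$ and $\alpha$ lie in $A$, the defining property of convex sets — every convex combination of two points of $A$ stays in $A$ — immediately yields $tx + (1-t)\alpha \in A$ for all $t \in [0,1]$. Thus the whole segment lies in $A$, which is exactly the condition for $A$ to be star-convex in $x$. Because $x \in A$ was arbitrary, $A$ is in fact star-convex in every one of its points, so the kernel of $A$ coincides with $A$ itself.

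There is no real obstacle here: the proposition is essentially a restatement of the definition of convexity phrased in the language of star-convexity, which is presumably why the authors elect to omit the proof. The one conceptual point worth flagging — and the reason the proposition is useful to cite within the proof of \refthm{starconvex} — is that convexity is genuinely stronger than star-convexity in a single specified point; it is equivalent to star-convexity in every point simultaneously, and the short argument above is precisely what makes that equivalence explicit in the direction that will be needed downstream.
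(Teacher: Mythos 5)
Your proof is correct: fixing $x,\alpha\in A$, convexity gives $tx+(1-t)\alpha\in A$ for all $t\in[0,1]$, which is exactly the star-convexity condition at $x$. The paper itself states this proposition without proof (labeling it ``well-known''), and your direct unpacking of the two definitions is the obvious and only reasonable argument, so there is nothing to compare; the brief remark you append about convexity being equivalent to star-convexity at every point is accurate and harmless, though not needed for the statement.
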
 
\begin{corollary} 
The kernel of a non-empty convex set $A\subseteq \reals^d $ is $A$ itself.
\end{corollary}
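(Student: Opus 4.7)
The plan is to prove the two set inclusions $\mathrm{kernel}(A) \subseteq A$ and $A \subseteq \mathrm{kernel}(A)$ separately, each of which follows immediately from the definitions together with the preceding Proposition~\ref{prop:conv_is_starconv}.

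First I would observe that the inclusion $\mathrm{kernel}(A) \subseteq A$ is immediate from the definition of the kernel, which is stated as the set of points $z \in A$ satisfying an additional star-convexity property; so membership in the kernel requires membership in $A$ by construction, with no use of convexity needed.

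For the reverse inclusion $A \subseteq \mathrm{kernel}(A)$, I would take an arbitrary point $x \in A$ and invoke Proposition~\ref{prop:conv_is_starconv}, which asserts that a non-empty convex set is star-convex in each of its points. Since $A$ is non-empty and convex by hypothesis, $A$ is star-convex in $x$. Thus $x$ satisfies the defining property of the kernel, so $x \in \mathrm{kernel}(A)$. As $x \in A$ was arbitrary, the inclusion follows, and combining the two inclusions yields $\mathrm{kernel}(A) = A$.

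There is no real obstacle here: the corollary is essentially a restatement of Proposition~\ref{prop:conv_is_starconv} phrased in terms of the kernel, so the proof amounts to unwinding definitions. The only minor care needed is to note explicitly that $A$ is non-empty in the hypothesis so that Proposition~\ref{prop:conv_is_starconv} applies (otherwise the kernel of an empty set is vacuously empty and the statement still holds, but invoking the proposition requires non-emptiness).
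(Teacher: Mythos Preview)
Your proposal is correct. The paper actually states this corollary without proof, grouping it among results that are ``well-known hence stated without proofs''; your argument is exactly the natural one-line justification, deriving the corollary directly from the definition of the kernel together with \refprop{conv_is_starconv}.
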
 

\begin{proposition}[\cite{moszynska2006}]
Let $A\subseteq \reals^d $ and $B\subseteq \reals^d $ be star-convex sets in $x \in A \cap B$.
Then, $A \cup B$ and $A \cap B$ are star-convex in $x$.
\label{prop:starconv_union}
\label{prop:starconv_intersection}
\end{proposition}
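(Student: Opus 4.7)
The plan is to unwind the definition of star-convexity on each side and check the line-segment condition separately for the union and for the intersection. Fix $x \in A \cap B$, and for any point $\alpha \in \reals^d$ let $[x,\alpha] := \{(1-t)x + t\alpha : t \in [0,1]\}$ denote the line segment from $x$ to $\alpha$. The hypothesis says $[x,\alpha] \subseteq A$ whenever $\alpha \in A$, and $[x,\alpha] \subseteq B$ whenever $\alpha \in B$. The goal is to check the analogous property for the set $A \cup B$ and for the set $A \cap B$, both with respect to the common point $x$.

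For the union, I would argue by cases on an arbitrary $\alpha \in A \cup B$: in either case, $\alpha$ lies in one of $A$ or $B$, the corresponding star-convexity hypothesis gives $[x,\alpha] \subseteq A$ or $[x,\alpha] \subseteq B$, and in either case $[x,\alpha] \subseteq A \cup B$. Since $x \in A \cap B \subseteq A \cup B$, the set $A \cup B$ contains the star-center $x$ as well, so it is star-convex in $x$.

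For the intersection, let $\alpha \in A \cap B$. Then simultaneously $\alpha \in A$ and $\alpha \in B$, so star-convexity of $A$ in $x$ yields $[x,\alpha] \subseteq A$ and star-convexity of $B$ in $x$ yields $[x,\alpha] \subseteq B$; intersecting these inclusions gives $[x,\alpha] \subseteq A \cap B$. Again $x \in A \cap B$ by hypothesis, so the center lies in the set and $A \cap B$ is star-convex in $x$.

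There is no real obstacle here; the statement is essentially a direct consequence of the definition, and the only subtlety worth flagging is that the two sets must be star-convex with respect to a \emph{common} point $x \in A \cap B$ (this is exactly what ensures that the intersection is non-empty and that a single star-center works for the union). This is why the hypothesis is phrased as ``star-convex in $x \in A \cap B$'' rather than merely ``each star-convex in some point.''
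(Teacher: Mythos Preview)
Your proof is correct; this is indeed a straightforward unwinding of the definition, and your remark about needing a common star-center $x\in A\cap B$ is exactly the point. The paper does not supply its own proof of this proposition---it is stated as a well-known fact with a citation---so there is no alternative argument to compare against.
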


%\begin{proposition} 
%Let $S$ be a set that is star-convex in the point $x$.
%Let $T$ be a convex set such that $x\in T$.
%Then, $S \cap T$ is star-convex in $x$.
%\end{proposition} 
\begin{proposition}[\cite{krantz2014}, see \cite{Li95} for a proof for the case of polygons] 
Let $A\subseteq \reals^d $ and $B\subseteq \reals^d $ be star-convex sets with kernels $k_A$ and $k_B$ respectively.
Then, it holds that $A+B$ is a star-convex set, with kernel $k_A + k_B$. 
\label{prop:starconv_minkowski}
\end{proposition}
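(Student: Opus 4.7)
The plan is to establish the set equality $\mathrm{ker}(A+B) = k_A + k_B$. Since $A$ and $B$ are star-convex, both $k_A$ and $k_B$ are non-empty, hence so is $k_A + k_B$; this equality therefore yields both star-convexity of $A+B$ and the claimed kernel identification simultaneously.

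\textbf{Easy inclusion $k_A + k_B \subseteq \mathrm{ker}(A+B)$.} For any $z = k_1 + k_2$ with $k_1 \in k_A$, $k_2 \in k_B$, and any $p = a + b \in A+B$, I would regroup the convex combination
\[
(1-t)z + tp = \bigl[(1-t)k_1 + ta\bigr] + \bigl[(1-t)k_2 + tb\bigr]
\]
and invoke the kernel properties of $k_1$ and $k_2$ termwise to place the first bracket in $A$ and the second in $B$. Hence $[z,p] \subseteq A+B$, so $z \in \mathrm{ker}(A+B)$.

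\textbf{Hard inclusion $\mathrm{ker}(A+B) \subseteq k_A + k_B$.} Given $z \in \mathrm{ker}(A+B)$, the plan is to produce a decomposition $z = a^\star + b^\star$ with $a^\star \in k_A$, $b^\star \in k_B$. I would select $(a^\star, b^\star)$ from the fiber $\mathcal{D}_z := \{(a,b) \in A \times B : a + b = z\}$ by an extremality argument---for instance, minimizing a strictly convex functional that measures distance from $k_A \times k_B$, exploiting Brunn's theorem that the kernel of a star-convex set is convex so that the sublevel sets behave well. To verify $a^\star \in k_A$, I would take an arbitrary $a' \in A$; since $z \in \mathrm{ker}(A+B)$ and $a' + b^\star \in A+B$, the segment $[z, a' + b^\star] \subseteq A+B$, and the extremality of the chosen decomposition would then force the natural parametrization $\bigl[(1-t)a^\star + ta'\bigr] + b^\star$ of this segment to be a valid $A$--$B$ decomposition, placing $(1-t)a^\star + ta' \in A$ for all $t \in [0,1]$. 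A symmetric argument gives $b^\star \in k_B$.

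\textbf{Main obstacle.} The delicate step is the ``coordinatewise descent'' in the hard inclusion: a priori, membership $\bigl[(1-t)a^\star + ta'\bigr] + b^\star \in A+B$ only asserts the existence of \emph{some} decomposition in $A \times B$, not necessarily one whose $B$-component equals $b^\star$. Blocking alternative decompositions and making the extremal selection rigorous is the technical heart of the argument; it is carried out in \cite{krantz2014} for general $d$ via convex-analytic tools leveraging the convexity of $k_A, k_B$, and in \cite{Li95} for the planar polygonal case via elementary geometric arguments on visibility kernels of polygons.
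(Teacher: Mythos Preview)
The paper does not supply its own proof of this proposition; it simply cites \cite{krantz2014} and \cite{Li95}. So there is nothing to match against, and I will assess your sketch on its own merits.

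Your argument for the inclusion $k_A + k_B \subseteq \mathrm{ker}(A+B)$ is correct, and this is in fact the only direction the paper ever uses: in the proof of \refthm{starconvex} one needs merely that $\conv\S + A$ is star-convex in every point of $\conv\S$.

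The reverse inclusion $\mathrm{ker}(A+B) \subseteq k_A + k_B$, which you set out to prove, is \emph{false in general}, so no extremality or fiber-selection argument of the kind you outline can succeed. A concrete counterexample in $\reals^2$: take
\[
A = ([-1,1]\times\{0\}) \cup (\{0\}\times[0,1]), \qquad B = ([-1,1]\times\{0\}) \cup (\{0\}\times[-1,0]).
\]
Both are star-convex with $k_A = k_B = \{(0,0)\}$, hence $k_A + k_B = \{(0,0)\}$. A direct computation gives
\[
A + B = ([-2,2]\times\{0\}) \cup [-1,1]^2,
\]
and one checks that $(\tfrac12,0) \in \mathrm{ker}(A+B)$: segments to points on the horizontal arms stay on the line $y=0$, and segments to points of the square $[-1,1]^2$ stay inside that square. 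Thus $\mathrm{ker}(A+B) \supsetneq k_A + k_B$. The obstacle you yourself flag---that membership of $(1-t)a^\star + t a' + b^\star$ in $A+B$ only guarantees \emph{some} $A\times B$ decomposition, not the one with $B$-component $b^\star$---is precisely where the argument breaks, and it is not repairable; the cited references cannot be establishing this direction either.

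Read the proposition as ``$A+B$ is star-convex and its kernel contains $k_A + k_B$''. Your first paragraph already proves that, and nothing more is needed for the paper.
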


\begin{proof}[Proof of Theorem~\ref{thm:starconvex}]
The set $\conv \S$ is a convex set and $A$ is star-convex in the origin. Hence, by \refprop{conv_is_starconv} and \refprop{starconv_minkowski} it holds that $\conv \S + A$ is star-convex in any point of $\conv \S$. Fix an arbitrary $c \in \S$. In particular, $\conv \S + A$ is star-convex in $c$. The Voronoi region $V_\S(c)$ is convex and always contains the point $c$ itself. Now, by applying \refprop{conv_is_starconv} and \refprop{starconv_intersection}, the set $(\S + A) \cap V_\S(c)$ is star convex in the point $c$. Translating this set by the vector $-c$ yields a set that is star convex in the origin. 
We picked $c$ arbitrarily, thus we may conclude that $(\S + A)_{\{c\}} - \{c\}$ is star-convex in the origin for any $c \in \S$. We now apply \refprop{starconv_union} which gives us the claim for $\mathfrak{g}_\S$, and another invocation of \refprop{starconv_union} yields the claim for $\mathfrak{g}_\SS$.
\end{proof}

%\vspace{-0.2cm}

The minimal $G$-invariant set has the following interesting property, 
which we proved by making use of the star-convexity of the minimal $G$-invariant set. 
In Section \ref{sec:num}, we give some illustrative examples of this property.
\begin{proposition}[Minimal Invariant Set Covers Translated Bounded Voronoi Regions]
  \label{prop:bounded_vor_regions}
Let $Q$ be the minimal $G$-invariant set with respect to $\mathcal S$ that contains the origin, and let $\mathcal S_{nc}:= \mathcal S \setminus (\partial\, \conv \mathcal S)$ be the ``non-corner'' points of $\mathcal S$. Then, 
\[
  Q \supseteq V_{\mathcal S}(y) - \{ y \} \quad \text {for all } y \in \mathcal S_{nc}.
\]
\end{proposition}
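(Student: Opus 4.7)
The plan is to fix any $y \in \mathcal{S}_{nc}$ and any $v \in V_{\mathcal{S}}(y)$, and to exhibit $v - y$ as the state of the error dynamics after finitely many steps starting from $e_0 = 0 \in Q$, so that $G$-invariance of $Q$ places $v - y$ in $Q$. Write $\alpha := v - y$ and observe that since $y$ lies strictly inside $\conv \mathcal{S}$, there is a maximal $t^* \in (0, 1]$ with $y + t^* \alpha \in \conv \mathcal{S}$; by convexity of $V_{\mathcal{S}}(y)$, the whole segment $\{y + s\alpha : s \in [0,1]\}$ is contained in $V_{\mathcal{S}}(y)$.

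The core of the argument is an induction on $n$ establishing that $s_n \alpha \in Q$, where $s_n := \min(n t^*, 1)$. The base case $s_0 = 0$ is immediate. For the inductive step, given $e := s_n \alpha \in Q$, I choose $x := y + (s_{n+1} - s_n)\alpha$; since $s_{n+1} - s_n \in [0, t^*]$, the point $x$ lies on the segment $[y, y + t^*\alpha] \subseteq \conv \mathcal{S}$, and $e + x = y + s_{n+1}\alpha$ lies on $[y, v] \subseteq V_{\mathcal{S}}(y)$. Hence $\vor[\mathcal{S}]{e+x} = y$, so $G_{\mathcal{S}, x}(e) = s_{n+1}\alpha$, which belongs to $Q$ by its $G$-invariance. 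After at most $\lceil 1/t^* \rceil$ iterations one reaches $s_n = 1$, giving $\alpha = v - y \in Q$.

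The main subtlety I anticipate is the possible ambiguity of $\vor[\mathcal{S}]{\cdot}$ on the boundary of $V_{\mathcal{S}}(y)$: if $v$ sits on a bisector between $y$ and some $y' \in \mathcal{S}$, the selector used in the definition of $G$ might return $y'$ rather than $y$ at the final step, sabotaging the construction. I would bypass this by running the induction strictly inside $V_{\mathcal{S}}(y)$ (keeping $s_n < 1$), so that $\vor[\mathcal{S}]{e+x}$ is unambiguously $y$ at every iteration. This establishes $s_n \alpha \in Q$ for a sequence $s_n \nearrow 1$. By the star-convexity of $Q$ at the origin (\refthm{starconvex}), each such $s_n \alpha$ in $Q$ upgrades to the whole segment $[0, s_n]\alpha \subseteq Q$, and hence the open segment $[0, 1)\alpha$ lies in $Q$. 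The paper's standing convention that sets are closed then yields $\alpha = v - y \in Q$ by taking $s \to 1^-$.
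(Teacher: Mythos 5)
Your proof is correct, and it takes a genuinely different route from the paper's. The paper argues by contradiction: if some $z \in V_{\mathcal S}(y)-\{y\}$ were missing from $Q$, then (using star-convexity and closedness of $Q$) one can pick the ``exit point'' $e \in \partial Q$ on the segment from the origin to $z$, and a single application of $G_{\S,x}$ with a small step $x$ toward $z$ would push $e$ just outside $Q$ while staying in the Voronoi cell, contradicting invariance. You instead run the dynamics \emph{forward} from $e_0 = 0$, explicitly constructing a sequence of valid inputs $x_n \in \conv\S$ that drive $e_n$ along the segment $[0, v-y]$ to $v-y$; invariance keeps every iterate in $Q$, and (if needed) closedness of $Q$ handles the terminal limit. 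Both arguments rest on the same geometric facts --- convexity of $V_{\mathcal S}(y)$, interiority of $y$ in $\conv\S$ (so step sizes up to $t^*>0$ are feasible inputs), and the standing convention that $Q$ is closed --- but your constructive version avoids having to locate a boundary point of $Q$ and reads as a direct reachability argument, which is arguably more elementary. One small simplification you could make: the invariance condition $\mathfrak{g}_\S(Q) \subseteq Q$ (\refprop{inv_mult}) is selector-agnostic --- it requires $p - c \in Q$ for \emph{every} $c$ with $p \in V_\S(c)$, not just the one returned by $\vor[\S]{\cdot}$ --- so the tie-breaking subtlety you flag never actually arises, and you could take $s_n = \min(nt^*,1)$ and terminate in finitely many steps without the limiting argument. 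Likewise, once you have $s_n\alpha \in Q$ with $s_n \nearrow 1$, closedness of $Q$ alone gives $\alpha \in Q$; the detour through star-convexity, while valid, is not needed for that final step.
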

In \reffig{proofill}, we give an example illustration which might be helpful to get an intuition for the proof.
\begin{figure}
  \centering
  \includegraphics[width=0.2\textwidth]{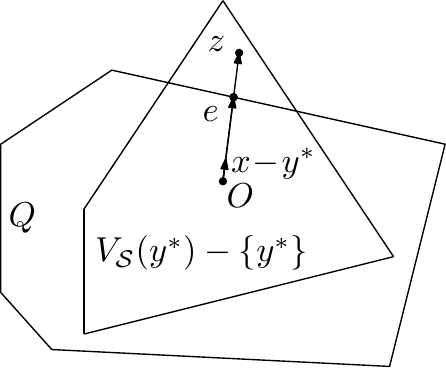}
  \caption{\label{fig:proofill}Illustration of the proof of \refprop{bounded_vor_regions}.}
  %\vspace{-0.5cm}
\end{figure}
\begin{proof}
  Assume for the sake of contradiction that there exists $y^* \in \mathcal S_{nc}$ for which $ Q \not\supseteq V_\mathcal{S} (y^*) -\{y^* \}$. Let $W:=  V_\mathcal{S} (y^*) -\{y^* \}$.
  Choose $z \in W \setminus Q $. 
  Let $e \in \partial Q \cap W$ be the point on the line through $z$ and the origin that is 
   closest  to $z$ (in Euclidean distance).
  %Let $e$ be the intersection that is closest (in Euclidean distance) to $z$ between the boundary of $Q$ and the line through $z$ and the origin. 
  Note that by the fact that $Q$ is star-convex in the origin, by the convexity of $W$, and by the fact that $W$
  also contains the origin,
  %both these sets contain the origin,
  such a point $e$ is well-defined.
%orthogonal projection of $z$ on $Q$; note that this implies that $e$ lies on the boundary of $Q$. 
Because $y^* \in \mathcal S_{nc}$ lies in the interior of $\conv \S$, for small enough $\varepsilon > 0$ there exists
$x \in \conv \S$ such that 
the following three properties hold: (i) $\|x-y^*\|=\varepsilon$, (ii) the difference vector $x-y^*$ points in the same direction as $z$ (and, therefore, $e$), and (iii)  $e + (x - y^*)$ lies on the line segment between $e \in W$
%V_\mathcal{S} (y^*) - \{y^* \} $ 
and $z \in W$. %V_\mathcal{S} (y^*) - \{y^* \} $. 

%$x+e$ lies on the line segment between $z+y^*$ and $y^*$.
%
%an $\varepsilon$-ball around $y^*$, denoted as $B_{\varepsilon,y^*}$, such that $B_{\varepsilon,y^*} \subset \conv \S$.
%
%Now, choose $x \in B_{\varepsilon,y^*}$ distinct from $y^*$ such that 
 By convexity of $V_\mathcal{S}(y^*)$ and by construction of $x$ and $e$, it holds that  $x+e-y^* \in W =  V_\mathcal{S}(y^*)- \{y^* \}$, or $x+e \in V_\mathcal{S}(y^*)$.  %indeed, we chose $z$ such that $z+y^* \in V_\mathcal{S}(y^*)$, trivially  $y^* \in V_\mathcal{S}(y^*)$, hence  $x + e$, which lies on the line segment between $z+y^*$ and $y^*$, hence by the convexity of 
%$V_\mathcal{S}(y^*)$ we have that $x+e \in V_\mathcal{S}(y^*)$.
%and has length $|x-y^*|>0$, which is always possible because $y^*$ is a non-corner point.
%$x+e \in V_\mathcal{S}(y^*)$ and such that 
However, since $e$ lies already on the boundary of $Q$, %and $x-y^*$ points 
%Hence, 
the vector $x+e-y^* \notin Q$ % V_\mathcal{S} (y^*) -\{y^* \} \setminus Q $ 
which contradicts the invariance of $Q$.
\end{proof}

\section{Analysis: Delayed Problem} \label{sec:delayed}
%\vspace{-0.2cm}
We now analyze the problem illustrated in Figure \ref{problem2}, wherein the input $x_n$ at time step $n$ is chosen from the \emph{delayed} convex set $ \conv \S_{n-1}$. We then have the following error dynamics
%\vspace{-0.2cm}
\[ 
e_{n+1} = e_n + x_n - \vor[\S_n]{e_n + x_n} %\vspace{-0.2cm}
\]
where $x_n \in \conv{\S_{n-1}}$ and $\S_n, \S_{n-1} \in \SS$. 
Observe that this dynamics involves a pair of sets rather then a single set, and hence 
%Observe that in this dynamics, the set from which the request is taken differs from the set to which the projection is performed. Hence, 
the previous results cannot be applied directly. Fortunately, we can reformulate this dynamics in terms of the modified input $z_n = e_n + x_n$ similarly to the approach in \cite{adler2005,nowicki2004}\footnote{Although similar approach was used in \cite{adler2005,nowicki2004}, it was applied to a different problem; to the best of our knowledge, we are the first to analyze the delayed problem using this reformulation.}. For this new state variable, we have that
\begin{equation} \label{eqn:x_dyn}
z_{n+1} = z_n + x_{n+1} - \vor[\S_n]{z_n},
\end{equation}
where $x_{n+1} \in \conv{\S_{n}}$. The advantage of reformulation \eqref{eqn:x_dyn} is that it depends on the single set $\S_n$ rather than the pair $(\S_n, \S_{n-1})$. Thus, the following operator can be defined.

\begin{definition}
For any finite non-empty  set $\S \subset \reals^d$ and any $x \in \conv{\S}$ we define the map
\begin{align*}
F_{\S,x }:  \reals^d &\rightarrow \reals^d  \\
z & \mapsto z + x - \vor[\S]{z}
\end{align*}
\end{definition}

The dynamics \eqref{eqn:x_dyn} can be then expressed as
\begin{equation} \label{eqn:x_dyn2}
z_{n+1} = F_{\S_n,x_{n+1}}(z_n)
\end{equation}
and $F$-invariance with respect to $\SS$ is given by Definition \ref{def:invariance} by replacing $G$ with $F$.

%we have the following definition of invariance.

\iffalse
\begin{definition}[Invariant domain] \label{def:invariant_D}
We say that a set $D \subseteq \reals^d$ is an \emph{invariant domain} (or simply \emph{invariant}) with respect to a set $\S \subset \reals^d$
if
\[
\forall \, x \in \P, \quad F_{\S,x}(D) \subseteq D.
\]
Given a collection \SS of sets $\S \subset \reals^d$, we say that  $D \subseteq \reals^d$ is invariant with respect to \SS if it is invariant with respect to every $\S \in \SS$. 
\end{definition}
\andy{definition of invariance: (F, S)-invariant, G-invariant with respect to S}
\fi

The following proposition makes the connection between $F$-invariant sets and boundedness of the accumulated error.
\begin{proposition} \label{prop:inv_bound_domain}
Let $\SS$ be a collection of subsets of $\reals^d$. Consider the dynamics \eqref{eqn:x_dyn2}, where $\S_n \in \SS$ for all $n$. Let $D$ be an $F$-invariant set with respect to the collection $\SS$, and assume that $z_0 \in D$.
\begin{enumerate}
    \item[(i)] Then it holds that 
    $\|e_n\| \leq \max_{\S \in \SS} \max_{v \in D - \conv \S} \|v\|$
    for all $n \geq 1$.
    \item[(ii)] If in addition $\S \subseteq D$ for all $\S \in \SS$, we have that $\|e_n\| \leq \diam D$ for all $n \geq 1$.
\end{enumerate}
\end{proposition}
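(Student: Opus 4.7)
The plan is to reduce everything to the $F$-invariance of $D$ for the state variable $z_n = e_n + x_n$, then recover a bound on $e_n$ via the identity $e_n = z_n - x_n$ together with the feasibility $x_n \in \conv \S_{n-1}$.

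First I would show by induction on $n$ that $z_n \in D$ for all $n \geq 0$. The base case $z_0 \in D$ is assumed. For the inductive step, the dynamics \eqref{eqn:x_dyn2} gives $z_{n+1} = F_{\S_n, x_{n+1}}(z_n)$ with $\S_n \in \SS$ and $x_{n+1} \in \conv \S_n$; applying $F$-invariance of $D$ with respect to $\SS$ (Definition \ref{def:invariance} with $F$ in place of $G$) immediately yields $z_{n+1} \in D$.

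For part (i), fix $n \geq 1$. Then $e_n = z_n - x_n$, where $z_n \in D$ by Step 1, and $x_n \in \conv \S_{n-1}$ with $\S_{n-1} \in \SS$. Hence
\[
e_n \in D - \conv \S_{n-1} \subseteq \bigcup_{\S \in \SS}(D - \conv \S),
\]
and taking norms gives $\|e_n\| \leq \max_{\S \in \SS}\max_{v \in D - \conv \S}\|v\|$, as desired.

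For part (ii), under the extra hypothesis $\S \subseteq D$ for every $\S \in \SS$, I would write $x_n$ explicitly as a convex combination: by Carathéodory's theorem, $x_n = \sum_{i} \lambda_i c_i$ with $c_i \in \S_{n-1} \subseteq D$, $\lambda_i \geq 0$, $\sum_i \lambda_i = 1$. Then
\[
e_n = z_n - x_n = \sum_i \lambda_i (z_n - c_i),
\]
and since both $z_n$ and each $c_i$ lie in $D$, the triangle inequality gives
\[
\|e_n\| \leq \sum_i \lambda_i \|z_n - c_i\| \leq \sum_i \lambda_i \diam D = \diam D.
\]
There is no real obstacle here; the argument is essentially bookkeeping. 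The only subtlety worth flagging is the indexing convention for the delayed problem (namely, at step $n$ the input $x_n$ comes from the previous oracle set $\S_{n-1}$, so that the pair $(\S_n, x_{n+1})$ in the reformulated dynamics \eqref{eqn:x_dyn2} involves the same index $n$), which is precisely what makes the $F$-invariance applicable in a single-set form.
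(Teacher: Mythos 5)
Your proof is correct, and for part (i) it matches the paper's argument exactly: $z_n \in D$ by $F$-invariance, together with $e_n = z_n - x_n$ and $x_n \in \conv \S_{n-1}$. For part (ii) you take a slight detour: you decompose $x_n$ via Carath\'eodory as a convex combination of points of $\S_{n-1} \subseteq D$ and apply the triangle inequality to $e_n = z_n - x_n$. The paper instead works with the error recursion directly, observing that $e_{n+1} = z_n - \vor[\S_n]{z_n}$ is a difference of two points that \emph{both} lie in $D$ (since $z_n \in D$ and $\vor[\S_n]{z_n} \in \S_n \subseteq D$), so $\|e_{n+1}\| \leq \diam D$ is immediate with no need to expand a convex combination. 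Both arguments are elementary and give the same bound; the paper's version is a bit cleaner because it exploits the fact that the greedy action itself is a point of $D$, whereas yours only uses the weaker fact that $x_n$ lies in the convex hull of a subset of $D$. Your remark on the indexing is also correct and is the one subtlety of the delayed setting worth flagging.
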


\begin{proof}
Part (i) of the proposition follows trivially by the invariance of $D$, and the fact that $e_n = z_n - x_n$ for $z_n \in D$ and $x_n \in \conv \S$ for some $\S \in \SS$.
For part (ii), observe that
\[
e_{n+1} = x_n + e_n - \vor[\S_n]{x_n + e_n} = z_n - \vor[\S_n]{z_n}
\]
where both $z_n \in D$ and $\vor[\S_n]{z_n} \in \S_n \subseteq D$. Hence, $\|e_{n+1}\| \leq \diam D$.
\end{proof}

We next formulate results that provide conditions for the existence of bounded minimal $F$-invariant sets, and a method to compute these sets. 

\subsection{Characterization of $F$-Invariant Sets via a Fixed-Point Equation}

We first define set-operators similarly to \cite{nowicki2004,tresser2007}.

\begin{definition}[Set operators] \label{def:p}
Fix $d \in \natnum$.
For any set $\S \subseteq \reals^d$, we define:
\[
\mathfrak{p}_\S(D):=\P + \bigcup_{c \in \S } \vsect{D} -c.
%\quad \mathfrak{P}_\S(D):=\conv{\left(\mathfrak{p}_\S(D)\right)}.
\]
Also, for a collection \SS, let
\[
\mathfrak{p}_\SS (D):=\bigcup_{\S \in \SS} \mathfrak{p}_\S(D).
%\quad \mathfrak{P}_\SS (D):= \bigcup_{\S \in \SS} \mathfrak{P}_\S (D).
\]
%\niek{do we need to define them explicitly\ldots?}
Finally, we define the iterates $\mathfrak{p}^n_\S(D)$ and  $\mathfrak{p}^n_\SS(D)$ in the same way as we did for the $G$-iteration in Theorem~\ref{theo:min_G}. %for the $G$-iteration.
%as before.
% $\mathfrak{p}^n_\S(D)$ and  $\mathfrak{p}^n_\SS(D)$. % as before.
% , \mathfrak{P}^n_\S(D),  and $\mathfrak{P}^n_\SS(D)$, 
%\iffalse
%Let $D_{\circ},D_{\circ}' \subset \mathbb R^n$ such that $D_\circ \supseteq \conv \S$ and
%$D'_\circ \supseteq \bigcup_{\S \in \SS} \conv   \S$. Then,
%\begin{align*}
%  \mathfrak{p}^n_\S(D_\circ) &:=  \mathfrak{p}_\S ( \mathfrak{p}^{n-1}_\S(D_\circ)) && n \in \mathbb{N}, n\geq 1,&\mathfrak{p}^0_\S(D_\circ) &:= D_\circ, \\
%  \mathfrak{p}^n_\SS(D'_\circ) &:=  \mathfrak{p}_\SS ( \mathfrak{p}^{n-1}_\SS(D'_\circ)) && n \in \mathbb{N}, n\geq 1,&\mathfrak{p}^0_\SS(D'_\circ) &:= D'_\circ,%  \mathfrak{P}^n_\S(D_\circ) &:=  \mathfrak{P}_\S ( \mathfrak{P}^{n-1}_\S(D_\circ)) && n \in \mathbb{N}, n\geq 1,&\mathfrak{P}^0_\S(D_\circ) &:= D_\circ. 
%\end{align*}
%Furthermore, let $D_{\circ}' \subset \mathbb R^n$ such that $D'_\circ \supseteq \bigcup_{\S \in \SS} \conv   \S$, and define
%\begin{align*}
 \\
%  \mathfrak{P}^n_\SS(D'_\circ) &:=  \mathfrak{P}_\SS ( \mathfrak{P}^{n-1}_\SS(D'_\circ)) && n \in \mathbb{N}, n\geq 1,&\mathfrak{P}^0_\SS(D'_\circ) &:= D'_\circ. 
%\end{align*}
%\fi
%\vspace{-0.5cm}
\end{definition}

\begin{proposition} \label{prop:inv_mult_p}
For any set $D \subseteq \reals^d$, it holds that $D \subseteq \mathfrak{p}_\SS(D)$, and $D$ is $F$-invariant with respect to $\SS$ if and only if 
\[
\mathfrak{p}_\SS(D) \subseteq D. %\vspace{-0.3cm}
\]
\end{proposition}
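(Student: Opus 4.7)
The plan is to mirror the structure of \refprop{inv_mult} and its supporting lemma (\reflem{A_in_g}), adapted to the $F$-dynamics. The key observation is that $\mathfrak{p}_\S(D)$ admits the alternative description
\[
  \mathfrak{p}_\S(D) \;=\; \bigcup_{x \in \conv \S}\, F_{\S,x}(D),
\]
since every $z \in D$ lies in $\vsect{D}$ for $c = \vor[\S]{z}$, and then $F_{\S,x}(z) = x + (z - c)$ with $x \in \conv \S$; conversely, any element of $\mathfrak{p}_\S(D)$ has this form. Once this is available, the equivalence between the fixed-point inclusion $\mathfrak{p}_\S(D) \subseteq D$ and $F$-invariance with respect to $\S$ is essentially the definition of invariance.

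\textbf{Step 1 (analogue of \reflem{A_in_g}).} Show that $D \subseteq \mathfrak{p}_\S(D)$ for every non-empty set $\S$. Given $v \in D$, pick $c \in \S$ with $c = \vor[\S]{v}$; then $v \in V_\S(c)$, so $v - c \in \vsect{D} - c$. Since $c \in \S \subseteq \conv \S$, taking $x = c$ in the Minkowski sum yields $v = c + (v-c) \in \conv \S + (\vsect{D} - c) \subseteq \mathfrak{p}_\S(D)$. Taking a union over $\S \in \SS$ immediately gives $D \subseteq \mathfrak{p}_\SS(D)$, establishing the first half of the proposition.

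\textbf{Step 2 (fixed-point characterization for a single $\S$).} Combine Step 1 with the alternative description above. If $\mathfrak{p}_\S(D) \subseteq D$, then for every $x \in \conv \S$ we have $F_{\S,x}(D) \subseteq \mathfrak{p}_\S(D) \subseteq D$, so $D$ is $F$-invariant with respect to $\S$. Conversely, if $D$ is $F$-invariant with respect to $\S$, then for every $x \in \conv \S$ and every $z \in D$, $F_{\S,x}(z) \in D$; taking the union over all such $x$ and $z$ gives $\mathfrak{p}_\S(D) \subseteq D$.

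\textbf{Step 3 (extension to collections).} This is identical to the argument in \refprop{inv_mult}(ii). If $\mathfrak{p}_\SS(D) \subseteq D$, then $\mathfrak{p}_\S(D) \subseteq D$ for every $\S \in \SS$, so by Step 2 $D$ is $F$-invariant with respect to each $\S$, hence with respect to $\SS$. Conversely, $F$-invariance with respect to $\SS$ gives $\mathfrak{p}_\S(D) \subseteq D$ for each $\S \in \SS$, and taking the union yields $\mathfrak{p}_\SS(D) = \bigcup_{\S \in \SS}\mathfrak{p}_\S(D) \subseteq D$.

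No real obstacle is expected here: the proof is entirely parallel to \refprop{inv_mult}, and the only substantive bookkeeping is verifying the alternative description of $\mathfrak{p}_\S(D)$ in terms of $F_{\S,x}$, which rests on the tautology $z \in V_\S(\vor[\S]{z})$. The non-emptiness of $\S$ is used implicitly in Step 1 and should be noted.
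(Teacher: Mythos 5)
Your proof is correct and gives a complete, self-contained argument where the paper itself offers only a one-line pointer: the paper's proof of this proposition simply cites Lemma~\ref{lem:p_prop} and Proposition~\ref{prop:inv_p}, both of which are stated \emph{without proof} as ``extensions of'' results in \cite{nowicki2004}. You instead reproduce the $G$-case chain (\reflem{A_in_g}, \refprop{inv}, \refprop{inv_mult}) for the $\mathfrak{p}$ operator, which is exactly what the paper implicitly invites the reader to do. The nicest part of your write-up is making explicit the identity $\mathfrak{p}_\S(D) = \bigcup_{x\in\conv\S} F_{\S,x}(D)$; the paper never states this, and it renders the fixed-point characterization essentially definitional.

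One subtlety worth flagging, though it does not distinguish your proof from the paper's: that identity holds only if the closest-point map is treated as set-valued. The definition $\mathfrak{p}_\S(D) = \conv\S + \bigcup_{c\in\S} D_{\{c\}} - c$ takes a union over \emph{every} $c\in\S$ for which $z\in V_\S(c)$, whereas $F_{\S,x}(z) = z + x - \vor[\S]{z}$ commits to a single chosen projection. For $z$ on a Voronoi boundary, $\mathfrak{p}_\S(D)$ may contain points $x + z - c'$ that $\bigcup_x F_{\S,x}(D)$ misses. This affects only your $(\Rightarrow)$ direction in Step~2 (``$F$-invariant $\Rightarrow \mathfrak{p}_\S(D)\subseteq D$''), where you need $\mathfrak{p}_\S(D)\subseteq\bigcup_x F_{\S,x}(D)$; the direction used in $(\Leftarrow)$ always holds. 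The paper's own proof of \refprop{inv} for the $G$-dynamics makes precisely the same unqualified assertion (``every $v\in\mathfrak{g}_\S(A)$ can be written as $v = x - \vor{x}$''), so you are at the same level of rigor; if one wishes to be pedantic, both proofs should either interpret $\vor[\S]{\cdot}$ as set-valued in the definition of $F$ and $G$, or restrict the statement of invariance to hold for every admissible choice of projection operator.
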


 To prove this proposition, we need the following two results
that were proven in  \cite{nowicki2004} for the case where a set $\S$ is the collection of \emph{corner points} (vertexes) of $\conv \S$. These proofs extend exactly in the same way as these of $G$-invariance provided in \refsec{undelayed}; thus,  we present these two results without a proof.
% \andy{Might add the proofs in the Appendix...}

\begin{lemma}[Extension of Lemma 3.11 in \cite{nowicki2004}] \label{lem:p_prop}
\begin{enumerate}
    \item[] 
    \item[(i)] (Monotonicity) If $A \subseteq B$ then $\mathfrak{p}_\SS(A) \subseteq \mathfrak{p}_\SS(B)$.
    \item[(ii)] (Additivity) $\mathfrak{p}_\SS(A \cup B) = \mathfrak{p}_\SS(A) \cup \mathfrak{p}_\SS(B)$.
\end{enumerate}
\end{lemma}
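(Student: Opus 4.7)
The plan is to mirror the proof of \reflem{g_prop_mult} essentially verbatim, first establishing the two properties for the single-set operator $\mathfrak{p}_\S$ and then lifting them to $\mathfrak{p}_\SS$ via the definition $\mathfrak{p}_\SS(\cdot) = \bigcup_{\S \in \SS} \mathfrak{p}_\S(\cdot)$. Monotonicity for $\mathfrak{p}_\S$ is immediate from Definition~\ref{def:p}: if $A \subseteq B$, then $A \cap V_\S(c) \subseteq B \cap V_\S(c)$ for every $c \in \S$, hence the translated Voronoi sections $A_{\{c\}} - c$ are contained in $B_{\{c\}} - c$, and taking a union over $c$ and then Minkowski-summing with $\conv \S$ both preserve set inclusion.

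For additivity, the key algebraic manipulation is to push the union $A \cup B$ through the intersection with each Voronoi cell and then through the outer union, i.e.
\[
\bigcup_{c \in \S} ((A \cup B) \cap V_\S(c)) - c \;=\; \Bigl(\bigcup_{c \in \S} A_{\{c\}} - c\Bigr) \cup \Bigl(\bigcup_{c \in \S} B_{\{c\}} - c\Bigr),
\]
using that intersection distributes over union and that finite/infinite unions commute. Finally, Minkowski-summing with $\conv \S$ distributes over the outer union by \refprop{mink_union}, yielding $\mathfrak{p}_\S(A\cup B) = \mathfrak{p}_\S(A) \cup \mathfrak{p}_\S(B)$.

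To lift to the collection $\SS$: monotonicity is immediate since $A \subseteq B$ implies $\mathfrak{p}_\S(A) \subseteq \mathfrak{p}_\S(B)$ for each $\S$, and the union over $\S$ preserves this. Additivity follows from commuting the two unions:
\[
\mathfrak{p}_\SS(A \cup B) = \bigcup_{\S \in \SS}\bigl(\mathfrak{p}_\S(A) \cup \mathfrak{p}_\S(B)\bigr) = \mathfrak{p}_\SS(A) \cup \mathfrak{p}_\SS(B).
\]

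There is no real obstacle here; the proof is purely set-theoretic bookkeeping and relies only on (a) distributivity of intersection over union, (b) associativity/commutativity of unions, and (c) \refprop{mink_union} (distributivity of Minkowski sum over unions). The only conceptual difference with \reflem{g_prop_mult} is the extra outer Minkowski sum with $\conv \S$ that appears in the definition of $\mathfrak{p}_\S$, but this factor is common to both sides of each identity and never interacts with the Voronoi partition, so it goes through transparently.
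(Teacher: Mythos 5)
Your proof is correct. The paper itself gives no explicit argument for this lemma --- it simply states that the proofs ``extend exactly in the same way'' as those of \reflem{g_prop_mult} for the $G$-operator --- so your proposal is a faithful filling-in of what is left implicit. One small stylistic difference worth noting: the paper's proof of the $G$-analogue handles additivity by double inclusion, getting one direction from monotonicity and the other by picking an element $v = x - \vor[\S]{x}$ with $x \in \P + (A \cup B)$ and invoking \refprop{mink_union}. You instead give a direct chain of set identities (distributivity of intersection over union, then commutativity of unions, then \refprop{mink_union} on the outer Minkowski sum). Both routes are elementary and correct; yours is arguably tidier for $\mathfrak{p}_\S$ because the $\conv\S$ term sits \emph{outside} the Voronoi decomposition, so it never interacts with the cell-wise splitting, whereas in $\mathfrak{g}_\S$ the $\conv\S$ term is inside and one must reason through the closest-point map. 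Either way, the lift to the collection $\SS$ is immediate from $\mathfrak{p}_\SS(\cdot) = \bigcup_{\S \in \SS} \mathfrak{p}_\S(\cdot)$, as you observe.
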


\begin{proposition}[Extension of Lemma 3.2 and Proposition 3.12 in \cite{nowicki2004}] \label{prop:inv_p}
Any $D \subseteq \reals^d$ satisfies $D \subseteq \mathfrak{p}_\SS(D)$. Moreover, 
$D$ is $F$-invariant with respect to $\SS$ if and only if 
$
\mathfrak{p}_\SS(D) \subseteq D.
$
\end{proposition}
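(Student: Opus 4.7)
The plan is to mirror the two-step proof of \refprop{inv_mult} for the $G$-dynamics, replacing $\mathfrak{g}_\S$ by $\mathfrak{p}_\S$ and $G_{\S,x}$ by $F_{\S,x}$. The only structural difference between the two set operators is that in $\mathfrak{p}_\S(D)=\P+\bigcup_{c\in\S}D_{\{c\}}-c$ the Minkowski sum with $\P$ sits \emph{outside} the union, because in $F_{\S,x}(z)=z+x-\vor[\S]{z}$ the input $x$ is added \emph{after} projecting $z$, whereas in the $G$-map $x$ is added \emph{before} the projection. This reshuffling is cosmetic: the roles of $x$ and of the displacement $z-c$ separate cleanly in both cases.

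For the first claim, $D \subseteq \mathfrak{p}_\SS(D)$, it suffices (by the definition of $\mathfrak{p}_\SS$ as a union over $\S \in \SS$) to show $D \subseteq \mathfrak{p}_\S(D)$ for each fixed $\S$. Given $v \in D$, I choose $c:=\vor[\S]{v} \in \S$, which gives $v \in V_\S(c)$, hence $v \in D_{\{c\}}$ and $v-c \in D_{\{c\}}-c$. Since $c \in \S \subseteq \P$, I write $v = c + (v-c) \in \P + (D_{\{c\}}-c) \subseteq \mathfrak{p}_\S(D)$.

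For the equivalence, the direction in which $\mathfrak{p}_\SS(D) \subseteq D$ implies $F$-invariance proceeds by fixing $\S \in \SS$, $x \in \P$, $z \in D$, setting $c:=\vor[\S]{z}$ (so that $z \in D_{\{c\}}$), and noting that $F_{\S,x}(z) = x + (z-c) \in \P + (D_{\{c\}}-c) \subseteq \mathfrak{p}_\S(D) \subseteq \mathfrak{p}_\SS(D) \subseteq D$. Conversely, any $v \in \mathfrak{p}_\SS(D)$ can be decomposed via \refdef{p} as $v = x + w - c$ with $\S \in \SS$, $c \in \S$, $w \in D_{\{c\}}$, $x \in \P$; since $w \in V_\S(c)$, the point $c$ is a valid value of $\vor[\S]{w}$, so $v = F_{\S,x}(w) \in D$ by $F$-invariance. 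The main (minor) obstacle is handling ties in the projection operator $\vor[\S]{\cdot}$: Voronoi cells are defined with closed inequalities and so boundary points lie in several cells, but since the paper defines $\vor[\S]{\cdot}$ as \emph{any} closest-point map, we may select the specific $c$ demanded by each decomposition. Apart from this bookkeeping, the proof is a line-by-line translation of the one for \refprop{inv_mult}, consistent with the paper's claim that the $F$-proofs extend identically.
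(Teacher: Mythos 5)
Your proof is correct and is exactly the kind of translation the paper has in mind: the paper itself states \refprop{inv_p} without proof, remarking only that the arguments ``extend exactly in the same way'' as those for $G$-invariance (\reflem{Asubg} and \refprop{inv}), so your line-by-line transcription is precisely what the authors intended. Two small remarks. First, your proof of $D \subseteq \mathfrak{p}_\S(D)$ is actually \emph{cleaner} than the paper's analogous \reflem{Asubg}: since in $\mathfrak{p}_\S$ the Voronoi intersection is applied directly to $D$ (rather than to $\P + D$), you only need $v \in V_\S(\vor[\S]{v})$, which is immediate, whereas the $G$-version needs a $c$ with $c = \vor[\S]{c+v}$, a slightly less obvious fixed-point claim. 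Second, regarding the tie-breaking caveat you raise: this same issue is already present, unremarked, in the paper's proof of \refprop{inv} (the paper asserts ``every $v \in \mathfrak{g}_\S(A)$ can be written as $v = x - \vor[\S]{x}$'' without noting that the selector $\vor[\S]{\cdot}$ might not pick the $c$ appearing in the decomposition), so your handling is consistent with --- in fact slightly more candid than --- the paper's own treatment.
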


\iffalse

The case of the convex iteration (namely, the one involving the operator $\mathfrak{P}_\SS$ in \refdef{p}) is not treated in \cite{nowicki2004}. The proof of the next result exactly follows the lines of that of Proposition \ref{prop:inv_conv}.

\begin{proposition} \label{prop:inv_conv_p}
A \emph{convex} set $D \subseteq \reals^d$ is $F$-invariant if and only if 
\[
D = \mathfrak{P}_\SS(D)
\]
\end{proposition}

\begin{proof}
Observe that by \refprop{inv_p}, we have for any $D$ that
\[
D \subseteq \mathfrak{p}_\SS(D) \subseteq \mathfrak{P}_\SS(D).
\]
Thus, we next focus on the condition $D \supseteq \mathfrak{P}_\SS(D)$.

$(\Rightarrow)$ Assume that $D \supseteq \mathfrak{P}_\SS(D)$. Then clearly $D \supseteq \mathfrak{p}_\S(D)$ for all $\S \in \SS$, and by Proposition \ref{prop:inv_p}, $D$ is an $F$-invariant set.

$(\Leftarrow)$ Assume that $D$ is a convex $F$-invariant set. Also, assume by the way of contradiction that $D \not\supseteq \mathfrak{P}_\S(D)$ for some $\S \in \SS$. Namely, there exists $v \in \mathfrak{P}_\S(D)$ such that $v \notin D$. But every $v \in \mathfrak{P}_\S(D)$ can be written as $v = \sum_i \beta_i v_i$ with $\sum_i \beta_i = 1$, $\beta \geq 0$, and $v_i \in \mathfrak{p}_\S(D) \subseteq D$ by the $F$-invariance of $D$. Therefore, we have that $v_i \in D$, but $v = \sum_i \beta_i v_i \notin D$, a contradiction to convexity of $D$.
\end{proof}
\fi

\begin{proof}[Proof of Proposition \ref{prop:inv_mult_p}]
The proof follows the lines of the proofs of Proposition \ref{prop:inv_mult} and \ref{prop:inv_conv} by leveraging the results of Lemma \ref{lem:p_prop} and Proposition \ref{prop:inv_p}.
\end{proof}

\subsection{Minimality and Boundedness}

\begin{theorem} \label{thm:main_uncertain}
Let \SS be a collection of sets $\S \subset \reals^d$, and let $D$ be any given set. 
Then the iterate $\mathfrak{p}^n_\SS(D)$ is monotonically non-decreasing, and the set
\[
\mathfrak{p}^\infty_\SS(D) := \lim_{n \rightarrow \infty} \mathfrak{p}^n_\SS(D) 
%\quad \text{and} \quad \mathfrak{P}^\infty_\SS(D) := \lim_{n \rightarrow \infty} \mathfrak{P}^n_\SS(D)
\]
is the minimal $F$-invariant set that contains the set $D$. 
%and the minimal \emph{convex} $F$-invariant set, respectively, 
%Moreover, if \SS is such that $\conv{\SS} := \{\conv{\S}, \S \in \SS \}$ is a collection of polytopes such that the sizes of the polytopes are uniformly bounded and such that the set $\mathcal{N}$ of outgoing normals to the faces of the polytopes is finite, then both $\mathfrak{p}^\infty_\SS(D)$ and $\mathfrak{P}^\infty_\SS(D)$ are bounded sets.
\end{theorem}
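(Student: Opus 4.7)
The plan is to mirror exactly the proof of \refthm{min_G}, since the structural ingredients for the $\mathfrak{p}$-operator established in \reflem{p_prop} and \refprop{inv_p} parallel those for the $\mathfrak{g}$-operator. First I would establish monotonicity of the iterates: by \refprop{inv_p} we have $D \subseteq \mathfrak{p}_\SS(D)$, and applying the monotonicity part of \reflem{p_prop} repeatedly to this inclusion gives $\mathfrak{p}^n_\SS(D) \subseteq \mathfrak{p}^{n+1}_\SS(D)$ for every $n\geq 0$, hence (by induction) $\mathfrak{p}^n_\SS(D) \subseteq \mathfrak{p}^{n'}_\SS(D)$ whenever $n \leq n'$. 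Consequently the limit $\mathfrak{p}^\infty_\SS(D)$ exists and equals the union $\bigcup_{n\geq 0}\mathfrak{p}^n_\SS(D)$.

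Next I would verify that $\mathfrak{p}^\infty_\SS(D)$ is $F$-invariant. Using the additivity property of \reflem{p_prop} (extended to arbitrary unions, which is legitimate since $\mathfrak{p}_\SS$ distributes over unions pointwise by its very definition), I would compute
\[
\mathfrak{p}_\SS(\mathfrak{p}^\infty_\SS(D)) \;=\; \mathfrak{p}_\SS\Bigl(\bigcup_{n\geq 0}\mathfrak{p}^n_\SS(D)\Bigr) \;=\; \bigcup_{n\geq 0}\mathfrak{p}^{n+1}_\SS(D) \;=\; \bigcup_{n\geq 1}\mathfrak{p}^n_\SS(D).
\]
Adjoining the $n=0$ term $D$, which is harmless since $D \subseteq \mathfrak{p}_\SS(D)$, this equals $\mathfrak{p}^\infty_\SS(D)$. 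Hence $\mathfrak{p}_\SS(\mathfrak{p}^\infty_\SS(D)) \subseteq \mathfrak{p}^\infty_\SS(D)$, which by \refprop{inv_mult_p} is precisely $F$-invariance. Clearly $\mathfrak{p}^\infty_\SS(D) \supseteq \mathfrak{p}^0_\SS(D) = D$.

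For minimality, let $Q$ be any $F$-invariant set containing $D$. By \refprop{inv_mult_p}, $Q \supseteq \mathfrak{p}_\SS(Q)$, and by monotonicity of $\mathfrak{p}_\SS$ applied to $D \subseteq Q$, we have $\mathfrak{p}_\SS(Q) \supseteq \mathfrak{p}_\SS(D)$, so $Q \supseteq \mathfrak{p}_\SS(D)$. Iterating this pair of inclusions gives $Q \supseteq \mathfrak{p}^n_\SS(D)$ for every $n \geq 0$, and taking the union yields $Q \supseteq \mathfrak{p}^\infty_\SS(D)$, as required.

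The proof is essentially mechanical given the tools already assembled, and I do not foresee a real obstacle; the only subtle point is the commutation of $\mathfrak{p}_\SS$ with the countable increasing union in the invariance step, but this is immediate from the definition of $\mathfrak{p}_\SS$ (which is built from unions, Minkowski sums, and Voronoi-cell intersections, all of which distribute over increasing unions in the relevant direction) combined with the finite additivity of \reflem{p_prop}.
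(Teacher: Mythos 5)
Your proof is correct and takes essentially the same route the paper intends: the paper defers the argument to Propositions~\ref{prop:mono_p} and~\ref{prop:main_single_p}, which it states without proof as direct analogues of the $G$-dynamics machinery, and explicitly suggests mirroring the proof of Theorem~\ref{theo:min_G}. You carry that plan out in full, using Lemma~\ref{lem:p_prop} and Proposition~\ref{prop:inv_p}/\ref{prop:inv_mult_p} exactly as the $\mathfrak{g}$-counterparts are used there; your note about extending binary additivity to the countable increasing union is the same (unremarked) step the paper takes in the proof of Theorem~\ref{theo:min_G}, and your justification via the definition of $\mathfrak{p}_\S$ is sound.
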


\begin{theorem} \label{thm:bounded_F}
Let $\SS$ be a collection of non-empty subsets of $\reals^d$. Assume that $\bigcap_{\S \in \SS} \S \neq \emptyset$. 
Under the conditions of Theorem \ref{theo:bound_G}, for any $s_0 \in \bigcap_{\S \in \SS} \S$, $\mathfrak{p}^\infty_\SS(\{s_0\})$ 
%and $\mathfrak{P}^\infty_\SS(\{s_0\})$ are
is a bounded set that contains $\cup_{\S \in \SS} \conv \S $.
\end{theorem}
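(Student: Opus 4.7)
The plan is to verify the two conclusions separately: the containment $\mathfrak{p}^\infty_\SS(\{s_0\}) \supseteq \bigcup_{\S \in \SS} \conv\S$ by a direct one-step computation, and boundedness by exhibiting an explicit bounded $F$-invariant set containing $s_0$ and invoking the minimality asserted in Theorem~\ref{thm:main_uncertain}.

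For the containment, I would first compute $\mathfrak{p}_\S(\{s_0\})$ for each $\S \in \SS$. Since $s_0 \in \S$, we have $\vor[\S]{s_0} = s_0$, and the set $\{s_0\} \cap V_\S(c)$ equals $\{s_0\}$ when $c = s_0$ and is empty otherwise; hence $\bigcup_{c \in \S}(\{s_0\} \cap V_\S(c)) - c = \{0\}$, yielding $\mathfrak{p}_\S(\{s_0\}) = \conv\S + \{0\} = \conv\S$. Taking the union over $\S$ gives $\mathfrak{p}^1_\SS(\{s_0\}) = \bigcup_\S \conv\S$, and monotonicity of the iterates (Theorem~\ref{thm:main_uncertain}) promotes this to the desired containment at the limit.

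For boundedness, the candidate bounded $F$-invariant set is
\[
D^* := Q + \bigcup_{\S \in \SS} \conv\S,
\]
where $Q$ is a bounded $G$-invariant set w.r.t.\ $\SS$ that contains the origin, whose existence is guaranteed by Theorem~\ref{theo:bound_G}. To verify $F$-invariance, write an arbitrary $z \in D^*$ as $z = e + x$ with $e \in Q$ and $x \in \conv\S'$ for some $\S' \in \SS$; then for any $\S \in \SS$ and any $x' \in \conv\S$,
\[
F_{\S, x'}(z) = e + x - \vor[\S]{e + x} + x' = G_{\S, x}(e) + x',
\]
where $G_{\S, x}(e) \in Q$ by the $G$-invariance of $Q$ and $x' \in \bigcup_\S \conv\S$, so $F_{\S, x'}(z) \in D^*$. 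Boundedness of $D^*$ follows because $Q$ is bounded by Theorem~\ref{theo:bound_G}, and, since $s_0 \in \S \subseteq \conv\S$ for every $\S$ while diameters of the $\conv\S$ are uniformly bounded by Assumption~\ref{asm:S}(i), the union $\bigcup_\S \conv\S$ is contained in a ball centered at $s_0$. Finally $s_0 \in D^*$ because $0 \in Q$ and $s_0 \in \bigcup_\S \conv\S$, so by the minimality in Theorem~\ref{thm:main_uncertain}, $\mathfrak{p}^\infty_\SS(\{s_0\}) \subseteq D^*$ and is therefore bounded.

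The main observation—and the only non-routine step—is the identity $F_{\S, x'}(z) = G_{\S, x}(e) + x'$, which exposes the delayed dynamics as a $G$-step on the error coordinate plus a decoupled shift by the next input, thereby allowing the Minkowski sum $Q + \bigcup_\S \conv\S$ to serve as a bounded invariant set. Once this decomposition is spotted, the rest of the argument merely assembles already-established pieces, so no serious obstacle arises.
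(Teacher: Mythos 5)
Your containment argument is correct and coincides with the paper's own route: the one-step computation $\mathfrak{p}_\S(\{s_0\}) = \conv\S$ (using $s_0 \in \S$, hence $s_0 \in V_\S(s_0)$ only) and the monotonicity of the iterates is exactly Lemma~\ref{lem:union_min} plus Proposition~\ref{prop:mono_p}.

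The boundedness argument, however, has a genuine gap in the $F$-invariance verification of $D^* := Q + \bigcup_{\S\in\SS}\conv\S$. You decompose $z \in D^*$ as $z = e + x$ with $e\in Q$ and $x \in \conv\S'$ for \emph{some} $\S'\in\SS$, and then assert ``$G_{\S,x}(e)\in Q$ by the $G$-invariance of $Q$.'' But Definition~\ref{def:invariance} only guarantees $G_{\S,x}(Q)\subseteq Q$ for $x\in\conv\S$ — the \emph{same} $\S$ that appears inside the Voronoi projection — whereas your $x$ lives in $\conv\S'$ with $\S'$ generally different from $\S$. In the delayed dynamics this mismatch is not an edge case but the essence of the problem: $x_n \in \conv\S_{n-1}$ while the projection uses $\S_n$, which is precisely why the paper remarks, before introducing the $F$-reformulation, that the error dynamics ``involves a pair of sets rather than a single set, and hence the previous results cannot be applied directly.'' As a concrete witness, take $d=1$, $\SS=\{\S_1,\S_2\}$ with $\S_1=\{0,1\}$, $\S_2=\{0,100\}$. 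Then the minimal $G$-invariant set containing $0$ is $Q=[-50,50]$, $\bigcup_\S\conv\S=[0,100]$, $D^*=[-50,150]$. For $z=150$ the only admissible decomposition is $e=50\in Q$, $x=100\in\conv\S_2$; applying $\S=\S_1$ gives $z-\vor[\S_1]{z}=149\notin Q$, and $z\notin Q+\conv\S_1=[-50,51]$, so no re-decomposition can rescue the step. (In this example $D^*$ does happen to satisfy $\mathfrak{p}_\SS(D^*)\subseteq D^*$, but your argument does not demonstrate it; whether $Q+\bigcup_\S\conv\S$ is always $F$-invariant under Assumption~\ref{asm:S} would require a separate proof.) The paper sidesteps this entirely: it invokes the bounded convex set furnished by Theorem 2.1 of \cite{tresser2007} and mirrors the proof structure of Theorem~\ref{theo:bound_G} directly for the $\mathfrak{p}$-operator, rather than attempting to transfer $G$-invariance of $Q$ across a Minkowski sum.
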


We note that Theorem \ref{thm:bounded_F} allows us to invoke part (ii) of Proposition \ref{prop:inv_bound_domain} to obtain an explicit bound on the accumulated error as follows.

\begin{corollary}
Suppose that $e_0 = 0$. Under the conditions of Theorem \ref{thm:bounded_F}, for any $x_0 \in \cup_{\S \in \SS} \conv \S$, we have that
\[
\| e_n \| \leq \diam \l ( \mathfrak{p}^\infty_\SS(\{s_0\})\r), \quad n \geq 1, 
\]
for any $s_0 \in \bigcap_{\S \in \SS} \S$.
\end{corollary}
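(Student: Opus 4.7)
The plan is to apply Proposition~\ref{prop:inv_bound_domain}(ii) directly, using $D := \mathfrak{p}^\infty_\SS(\{s_0\})$ as the invariant set witness. This reduces the corollary to a routine verification that the three hypotheses of that proposition hold for this choice of $D$ with $z_0 = e_0 + x_0 = x_0$.

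First, I would instantiate $D = \mathfrak{p}^\infty_\SS(\{s_0\})$ and record the two properties furnished by Theorem~\ref{thm:bounded_F}: $D$ is bounded (so $\diam D < \infty$) and $D \supseteq \bigcup_{\S \in \SS} \conv\S$. Since the minimality construction in Theorem~\ref{thm:main_uncertain} also guarantees that $D$ is $F$-invariant with respect to $\SS$, all of the structural requirements on $D$ in Proposition~\ref{prop:inv_bound_domain} are already in place.

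Next I would verify the hypothesis $\S \subseteq D$ for every $\S \in \SS$ that is needed for part~(ii). This is immediate because $\S \subseteq \conv\S \subseteq \bigcup_{\S' \in \SS} \conv\S' \subseteq D$. Then I would check the initial-condition hypothesis $z_0 \in D$: by definition $z_0 = e_0 + x_0$, and with $e_0 = 0$ we obtain $z_0 = x_0$; the assumption $x_0 \in \bigcup_{\S \in \SS} \conv\S \subseteq D$ supplies the required containment.

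With these checks in place, Proposition~\ref{prop:inv_bound_domain}(ii) yields $\|e_n\| \leq \diam D = \diam\bigl(\mathfrak{p}^\infty_\SS(\{s_0\})\bigr)$ for all $n \geq 1$, which is exactly the claim. There is essentially no obstacle here; the work was done upstream in Theorem~\ref{thm:bounded_F} (boundedness and coverage of $\bigcup_\S \conv\S$) and in Proposition~\ref{prop:inv_bound_domain}(ii) (the diameter bound given the covering property). The only thing worth stating carefully is why $z_0 \in D$, which is where the hypotheses $e_0 = 0$ and $x_0 \in \bigcup_\S \conv\S$ are used, and why the choice of $s_0 \in \bigcap_{\S \in \SS} \S$ is admissible (this is precisely the non-emptiness assumption carried over from Theorem~\ref{thm:bounded_F}).
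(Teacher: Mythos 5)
Your proof is correct and matches the paper's argument: both apply Theorem~\ref{thm:bounded_F} to obtain that $D = \mathfrak{p}^\infty_\SS(\{s_0\})$ is a bounded $F$-invariant set containing $\bigcup_{\S\in\SS}\conv\S$, observe $z_0 = x_0 \in D$, and conclude via Proposition~\ref{prop:inv_bound_domain}(ii). Your write-up is simply more explicit about verifying the hypotheses (in particular $\S\subseteq D$), which the paper leaves implicit.
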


\begin{proof}
Since $z_0 = x_0 + e_0 = x_0 \in \cup_{\S \in \SS} \conv \S$, by Theorem \ref{thm:bounded_F}, $z_n \in \mathfrak{p}^\infty_\SS(\{s_0\})$ for all $n$. In particular, the latter  set contains $\cup_{\S \in \SS} \conv \S$, and the result follows by  Proposition \ref{prop:inv_bound_domain} (ii).
\end{proof}

To prove Theorem \ref{thm:main_uncertain}, we need the following extension of the results in \cite{nowicki2004}, that are provided here without a proof.
%\vspace{-0.1cm}
\begin{proposition}[Monotonicity of the iterates; Extension of Lemma 3.13 in \cite{nowicki2004}] \label{prop:mono_p}
Let $D \subseteq \reals^d$. Then the iterates $\mathfrak{p}^n_\SS(D)$ are monotonic, in the sense that $\mathfrak{p}^n_\SS(D) \subseteq \mathfrak{p}^{n'}_\SS(D)$ for all $n \leq n'$. Thus, there exists
\[
\mathfrak{p}^\infty_\SS(D) := \lim_{n \rightarrow \infty} \mathfrak{p}^n_\SS(D) = \bigcup_{n\geq0} \mathfrak{p}^n_\SS(D).
\] %\vspace{-0.5cm}
\end{proposition}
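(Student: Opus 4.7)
The plan is to mimic the proof strategy used for the analogous $G$-statement, namely Proposition~\ref{prop:mono_mult}, since all the ingredients required to run the same argument have already been established for $\mathfrak{p}_\SS$. Specifically, Lemma~\ref{lem:p_prop}(i) gives monotonicity of the operator $\mathfrak{p}_\SS$ (i.e.\ $A \subseteq B \Rightarrow \mathfrak{p}_\SS(A) \subseteq \mathfrak{p}_\SS(B)$), and Proposition~\ref{prop:inv_p} gives the key inflation property $D \subseteq \mathfrak{p}_\SS(D)$ for every $D \subseteq \reals^d$. These two facts together are enough to drive the argument.

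The proof will proceed by induction on $n$ to show the one-step containment $\mathfrak{p}^n_\SS(D) \subseteq \mathfrak{p}^{n+1}_\SS(D)$ for all $n \geq 0$. For the base case $n = 0$, note that $\mathfrak{p}^0_\SS(D) = D$ and $\mathfrak{p}^1_\SS(D) = \mathfrak{p}_\SS(D)$, and the containment $D \subseteq \mathfrak{p}_\SS(D)$ is exactly the inflation property from Proposition~\ref{prop:inv_p}. For the inductive step, assume $\mathfrak{p}^{n-1}_\SS(D) \subseteq \mathfrak{p}^n_\SS(D)$; applying the monotonicity of $\mathfrak{p}_\SS$ (Lemma~\ref{lem:p_prop}(i)) to both sides yields $\mathfrak{p}^n_\SS(D) = \mathfrak{p}_\SS(\mathfrak{p}^{n-1}_\SS(D)) \subseteq \mathfrak{p}_\SS(\mathfrak{p}^n_\SS(D)) = \mathfrak{p}^{n+1}_\SS(D)$, closing the induction. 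The general statement $\mathfrak{p}^n_\SS(D) \subseteq \mathfrak{p}^{n'}_\SS(D)$ for $n \leq n'$ then follows by chaining one-step inclusions.

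Having established a monotonically non-decreasing chain of sets $\mathfrak{p}^0_\SS(D) \subseteq \mathfrak{p}^1_\SS(D) \subseteq \mathfrak{p}^2_\SS(D) \subseteq \cdots$, the limit $\mathfrak{p}^\infty_\SS(D)$ is well-defined and coincides with the nested union $\bigcup_{n \geq 0} \mathfrak{p}^n_\SS(D)$, which is a standard fact about increasing unions of sets.

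There is no real obstacle here: the entire content of the proposition is inherited from Lemma~\ref{lem:p_prop}(i) and Proposition~\ref{prop:inv_p}, which are the direct $F$-analogues of the $G$-side results already in hand. The only minor care point is being explicit about the base case of the induction, since $\mathfrak{p}^0_\SS(D)$ is defined to be $D$ itself rather than produced by an application of $\mathfrak{p}_\SS$; but the inflation property addresses this cleanly.
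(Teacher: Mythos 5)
Your proposal is correct and follows exactly the route the paper intends: it mirrors the paper's proof of the $G$-side analogue (Proposition~\ref{prop:mono_mult}), which the paper states is a direct consequence of the operator's monotonicity and the inflation property, and the paper explicitly omits the $F$-side proof on the grounds that it extends in the same way. Your use of Lemma~\ref{lem:p_prop}(i) and Proposition~\ref{prop:inv_p} as the two ingredients, together with a short induction, is precisely what the authors had in mind.
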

%\vspace{-0.3cm}
\begin{proposition}[Extension of Corollary 3.15 in \cite{nowicki2004}] \label{prop:main_single_p}
Let $D \subseteq \reals^d$. The set $\mathfrak{p}^\infty_\SS(D)$ is the \emph{minimal} $F$-invariant set containing the set $D$. 
\end{proposition}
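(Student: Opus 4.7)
The plan is to mimic the proof of \refthm{min_G} (the $G$-analogue), but using the $F$-side toolkit: \refprop{inv_mult_p} (which characterizes $F$-invariance as the fixed-point inequality $\mathfrak{p}_\SS(D) \subseteq D$), \reflem{p_prop} (monotonicity and additivity of $\mathfrak{p}_\SS$), and \refprop{mono_p} (monotonicity of the iterates, so that $\mathfrak{p}^\infty_\SS(D) = \bigcup_{n \geq 0} \mathfrak{p}^n_\SS(D)$ is well-defined). The claim splits into two parts: (a) $\mathfrak{p}^\infty_\SS(D)$ is $F$-invariant and contains $D$; and (b) any $F$-invariant set $Q \supseteq D$ must satisfy $Q \supseteq \mathfrak{p}^\infty_\SS(D)$.

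For (a), containment of $D$ is immediate, since $D = \mathfrak{p}^0_\SS(D) \subseteq \bigcup_{n \geq 0} \mathfrak{p}^n_\SS(D) = \mathfrak{p}^\infty_\SS(D)$. To establish invariance, I would compute, via \refprop{inv_mult_p}, that $\mathfrak{p}_\SS(\mathfrak{p}^\infty_\SS(D)) \subseteq \mathfrak{p}^\infty_\SS(D)$. Specifically, I would write
\[
\mathfrak{p}_\SS\!\left(\bigcup_{n \geq 0} \mathfrak{p}^n_\SS(D)\right) = \bigcup_{n \geq 0} \mathfrak{p}_\SS\!\left(\mathfrak{p}^n_\SS(D)\right) = \bigcup_{n \geq 1} \mathfrak{p}^n_\SS(D) \subseteq \mathfrak{p}^\infty_\SS(D),
\]
using additivity from \reflem{p_prop}(ii) for the first equality and the definition of the iterates for the second. (Strictly, additivity is stated for two sets; I would note that it extends to arbitrary unions by monotonicity plus the argument that any element of the union on the left sits in some $\mathfrak{p}_\SS(\mathfrak{p}^n_\SS(D))$.) By \refprop{inv_mult_p}, the resulting inclusion is equivalent to $F$-invariance.

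For (b), I would argue by induction on $n$ that any $F$-invariant $Q \supseteq D$ satisfies $Q \supseteq \mathfrak{p}^n_\SS(D)$. The base case $n=0$ is the hypothesis $Q \supseteq D$. For the inductive step, invariance yields $Q \supseteq \mathfrak{p}_\SS(Q)$ by \refprop{inv_mult_p}, and monotonicity \reflem{p_prop}(i) gives $\mathfrak{p}_\SS(Q) \supseteq \mathfrak{p}_\SS(\mathfrak{p}^{n-1}_\SS(D)) = \mathfrak{p}^n_\SS(D)$. Taking the union over all $n$ produces $Q \supseteq \mathfrak{p}^\infty_\SS(D)$, which is exactly minimality.

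The whole argument is a routine translation of the $G$-case proof of \refthm{min_G}, and no genuine obstacle arises; the only mild subtlety is extending the additivity property from pairwise unions to the (possibly infinite) union appearing in the definition of $\mathfrak{p}^\infty_\SS(D)$, but this is handled either by the elementwise check sketched above or by a straightforward induction using \reflem{p_prop}(ii) together with the fact that the iterates form a nested increasing family by \refprop{mono_p}.
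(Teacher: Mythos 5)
Your proof is correct and takes essentially the same route the paper intends: the paper states this proposition without proof, remarking only that it extends Corollary~3.15 of \cite{nowicki2004} ``exactly in the same way as these of $G$-invariance provided in \refsec{undelayed},'' i.e., by transcribing the proof of \refthm{min_G} to the $\mathfrak{p}_\SS$ operator using \refprop{inv_mult_p}, \reflem{p_prop}, and \refprop{mono_p}, which is precisely what you do. Your aside about extending additivity from pairwise to countable unions is a small point of care the paper glosses over in both the $G$- and $F$-cases, but it is handled exactly as you indicate.
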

%\vspace{-0.1cm}
To prove Theorem \ref{thm:bounded_F}, we need the following  result.
%\vspace{-0.2cm}
\begin{lemma} \label{lem:union_min}
Let $\SS$ be a collection of non-empty subsets of $\reals^d$. Assume that $\bigcap_{\S \in \SS} \S \neq \emptyset$. Let 
\[
D := \bigcup_{\S \in \SS} \conv S.
\]
Then, for any $s_0 \in \bigcap_{\S \in \SS} \S$, we have that
$
\mathfrak{p}_\SS(\{s_0\}) = D.
$
Moreover, if $D$ is $F$-invariant with respect to $\SS$, it is the \emph{minimal} $F$-invariant set with respect to $\SS$ that contains $s_0$. 
\end{lemma}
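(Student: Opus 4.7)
The plan is to prove the two assertions separately, and both should fall out quickly from the definition of $\mathfrak{p}_\SS$ combined with results already proved in the paper.

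For the first claim, $\mathfrak{p}_\SS(\{s_0\}) = D$, I would start with a single $\S \in \SS$ and unpack
\[
\mathfrak{p}_\S(\{s_0\}) = \conv\S + \bigcup_{c \in \S} \bigl(\{s_0\} \cap V_\S(c)\bigr) - c.
\]
The key observation is that since $s_0 \in \S$ (by the hypothesis $s_0 \in \bigcap_{\S \in \SS} \S$), the condition $s_0 \in V_\S(c)$ forces $\|s_0 - c\| \le \|s_0 - s_0\| = 0$, i.e.\ $c = s_0$. Therefore only the index $c = s_0$ contributes to the inner union, and
\[
\bigcup_{c \in \S} \bigl(\{s_0\} \cap V_\S(c)\bigr) - c = \{s_0 - s_0\} = \{\boldsymbol{0}\}.
\]
Hence $\mathfrak{p}_\S(\{s_0\}) = \conv\S + \{\boldsymbol{0}\} = \conv\S$. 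Taking the union over all $\S \in \SS$ yields $\mathfrak{p}_\SS(\{s_0\}) = \bigcup_{\S \in \SS} \conv\S = D$.

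For the second claim, suppose $D$ is $F$-invariant with respect to $\SS$. By Proposition~\ref{prop:inv_mult_p}, invariance gives $\mathfrak{p}_\SS(D) \subseteq D$; combined with the always-valid inclusion $D \subseteq \mathfrak{p}_\SS(D)$ (also from Proposition~\ref{prop:inv_mult_p}), we obtain $\mathfrak{p}_\SS(D) = D$. Then by a trivial induction, using the first claim as the base case,
\[
\mathfrak{p}^{n}_\SS(\{s_0\}) = \mathfrak{p}^{\,n-1}_\SS(D) = D \qquad \text{for every } n \geq 1,
\]
so the monotone limit satisfies $\mathfrak{p}^\infty_\SS(\{s_0\}) = D$. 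Applying Proposition~\ref{prop:main_single_p} identifies $\mathfrak{p}^\infty_\SS(\{s_0\})$ as the minimal $F$-invariant set containing $\{s_0\}$, and the equality above transfers that minimality to $D$ itself.

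I do not anticipate any real obstacle here: the entire argument rests on the elementary fact that an element of $\S$ is its own unique nearest neighbor in $\S$, which collapses the Voronoi decomposition in $\mathfrak{p}_\S(\{s_0\})$ to a single trivial term. The only mild care point is to verify (using Proposition~\ref{prop:inv_mult_p}) that $D \subseteq \mathfrak{p}_\SS(D)$ always holds, so that the minimality argument needs only the one-sided inclusion coming from $F$-invariance.
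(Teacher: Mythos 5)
Your proof is correct and follows essentially the same route as the paper's: the paper likewise collapses $\bigcup_{c\in\S}(\{s_0\}\cap V_\S(c))-c$ to $\{\boldsymbol{0}\}$ (writing it tersely as ``$s_0-s_0$'') because $s_0\in\S$ is its own unique nearest neighbor, then observes that $F$-invariance of $D$ makes the iteration stabilize at $D$ after one step, so Theorem~\ref{thm:main_uncertain} (equivalently Proposition~\ref{prop:main_single_p}) yields minimality. You simply spell out the Voronoi-cell argument in more detail than the paper does.
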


\begin{proof}
Let $s_0 \in \bigcap_{\S \in \SS} \S$. We use the iteration of \refthm{main_uncertain} to prove that $D$ is the minimal $F$-invariant set that contains $s_0$. For the first iteration, for every $\S \in \SS$, we have by \refdef{p} that
%\vspace{-0.2cm}
\begin{eqnarray*}
\mathfrak{p}_\S(\{s_0\}) &:=& \P + \bigcup_{c \in \S } \vsect{\{s_0\}} -c \\
&=& \P + s_0 - s_0 = \P, %\vspace{-0.5cm}
\end{eqnarray*}
where the second equality follows by the fact that $s_0 \in \S$. Therefore, 
$
\mathfrak{p}_\SS(\{s_0\}) := \bigcup_{\S \in \SS} \mathfrak{p}_\S(\{s_0\}) = \bigcup_{\S \in \SS} \P := D.
$
Now for the second iteration,
$
\mathfrak{p}_\SS^2(\{s_0\}) = \mathfrak{p}_\SS(D).
$
Therefore, if $D$ is invariant, we have that $\mathfrak{p}_\SS^2(\{s_0\}) = D$, implying that the iteration has converged, and by \refthm{main_uncertain}, $D$ is the minimal $F$-invariant set with respect to $\SS$ that contains $s_0$.
\end{proof}

\begin{proof}[Proof of Theorem \ref{thm:bounded_F}]
The proof of boundedness of $\mathfrak{p}^\infty_\SS(\{s_0\})$ %and $\mathfrak{P}^\infty_\SS(\{s_0\})$ 
leverages the results of \cite{adler2005,tresser2007}, particularly Theorem 2.1 in \cite{tresser2007}, and follows that of Theorem \ref{theo:bound_G}. The fact that these sets contain $\cup_{\S \in \SS} \conv \S $ follows by Lemma \ref{lem:union_min} and the monotonicity of the iterates (Propositions \ref{prop:mono_p}).
\end{proof}

%\vspace{-0.5cm}
\section{A Computational Method for Computing Invariant Sets}
%\vspace{-0.2cm}

%\subsubsection{Computational Method for Computing an Invariant Set}
We can turn the iteration of Theorem \ref{theo:min_G} ($\mathfrak{g}^{n}$) %, Theorem \ref{theo:min_G_conv} ($\mathfrak{G}^{n}$) 
or \refthm{main_uncertain} ($\mathfrak{p}^{n}$) % or $\mathfrak{P}^{n}$) 
into a computational method (an algorithm that does not necessarily terminate) by augmenting the iteration with the stopping rule that corresponds to the invariance property (\refprop{inv_mult}): ending the iteration when the vertex-representation of 
the $(n+1)$th and $n$th iterates are equal. 
%$\mathfrak{G}^{n+1}_\SS (Q)$ equals that of $\mathfrak{G}^{n}_\SS (Q)$.
We have implemented the method for the special case of point sets in $\reals^2$.
The implementation is written in C++ with the help of the CGAL library; %\cite{cgal}, 
the source code is available online \cite{boumangit}. To prevent loss of precision during the iterations, and to be able to perform exact equality tests, we use exact rational arithmetic, instead of floating-point arithmetic.
%
%comparisons  
%a) using exact rational arithmetic, and b) 
%
Note that this choice restricts all vertices to have coordinates in $\mathbb{Q}$.

A typical problem with exact arithmetic is coefficient growth during the computation.
%of numerators and denominators
%is that 
%the magnitudes of numerators and denominators involved in the computation might grow during the computation.
In the particular case of rational arithmetic, a vertex coordinate might \emph{approach} a
mixed number (a sum of an integer and a proper fraction) whose fractional part has small numerator and denominator (by ``small'' we mean just a few digits),
%hereafter called \emph{nice fractions}), 
like $1/2$, $5\tfrac13$, $300\tfrac56$, etc., but never reach it in finite time. 
As a practical way to mitigate this problem, we apply, once in every $r \in \natnum$ iterations, the following conditional rounding function to each coordinate (of every vertex of the iterate) %$\mathfrak{G}^{n}_\SS (Q)$ if 
for which the maximum of the 
number of decimal digits of the numerator and denominator of that coordinate exceeds a threshold $s\in \natnum$.
For a positive $\epsilon \in \mathbb{R}$ (say, $\epsilon = 10^{-8}$) and a set $\mathcal{X}_k := \{ a/b : a,b \in \mathbb N ,\, 0\leq a \leq b,\, 1\leq b \leq k \}$ 
of proper fractions with small numerators and denominators (and including $0$ and $1$), 
parameterized by some $k\in \mathbb N$ (in practice, we pick $k \in [10^2, 10^3]$), 
%finite set of proper fractions with small numerators and denominators $\mathcal{X}\subset\mathbb{Q}$, 
we define
%\vspace{-0.35cm}
\begin{align*} 
\mathcal{R}:  \mathbb{Q} &\rightarrow \mathbb{Q} \\
q & \mapsto \begin{cases}
\lfloor q \rfloor + t & \text{if } |t- (q\!\mod 1) | \leq \epsilon, \\
q & \text{otherwise}, 
\end{cases}
\end{align*}
%\vspace{-0.35cm}
\noindent where  $t:= \argmin_{x \in \mathcal{X}_k }|x-(q\!\mod 1)|$.

The parameter $r$ has been introduced to prevent the rounding operation from hindering convergence, which can happen if the rounding operation keeps undoing a perturbation of the iterate applied by the set operator.
The parameter $s$ ensures that rounding only ``kicks in'' when it is really necessary, i.e., when sufficient coefficient-growth has occurred. Both parameters should not be chosen too small; $r=10$ and $s=20$ works well for us.

Recall that it immediately follows from the stopping rule that if the method converges, it means that it has found an invariant set. Hence, we may in principle perturb the set in an arbitrary way after each iteration in an attempt to aid convergence, provided that we ensure that any such ``ad-hoc'' perturbation cannot interfere with the perturbations performed by the set operator and with the procedure to check for convergence. The ``rounding trick'' outlined above works well in practice.

Note, however, that by perturbing the iterate %set 
%$\mathfrak{G}^{n}_\SS (Q)$ 
(through $\mathcal{R}$) during the iteration, we cannot guarantee anymore that the method finds the \emph{minimal} invariant set. Nonetheless, if the method converges  and  
coordinate rounding occurs \emph{only} just before convergence, in other words, if 
the last ``rounding-free'' iterate is $\delta$-close (measured by a suitable metric for sets)   %distance between sets in ) 
to the invariant set found by the method, then that invariant set is 
a $\delta$-close approximation to the minimal invariant set, by the monotonicity the iterates.
%e computational method is guaranteed to find a    un-rounden the one might argue )

%In general, it follows 
An additional benefit of applying $\mathcal{R}$ %to all vertex-coordinates 
is that the method is likely to  find an invariant set whose vertex-coordinates have small representation. 

%\andy{If the iteration converges, we find an invariant set. So any trick that helps to converge works. In particular...}

%\vspace{-0.3cm}

\section{Examples in $\mathbb{R}^2$} \label{sec:num}
%\vspace{-0.1cm}
In a first set of examples shown in \reffig{evo1} and \reffig{singleset1}, we let $\mathbb{S} = \{ \mcal{S} \}$. The $G$-invariant error sets have been found with our computational method. 

\begin{figure}[t]
  \begin{subfigure}[b]{.24\textwidth}
  \centering
        \includegraphics[scale=.6,page=1]{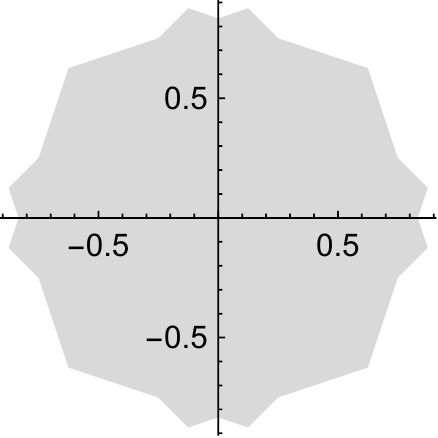}
        \caption{First iteration.}
        \label{fig:fulliter1}
    \end{subfigure}
    \hfill
  \begin{subfigure}[b]{.24\textwidth}
  \centering
        \includegraphics[scale=.6,page=2]{figs/joined}
        \caption{Second iteration.}
        \label{fig:fulliter2}
    \end{subfigure}
    \hfill
  \begin{subfigure}[b]{.24\textwidth}
  \centering
        \includegraphics[scale=.6,page=3]{figs/joined}
        \caption{Third iteration.}
        \label{fig:fulliter3}
    \end{subfigure}
    \hfill
  \begin{subfigure}[b]{.24\textwidth}
  \centering
        \includegraphics[scale=.6,page=4]{figs/joined}
        \caption{Fourth iteration.}
        \label{fig:fulliter4}
    \end{subfigure}
    \caption{Evolution of the iterate for the $G$-iteration ($\mathfrak{g}_\S$) for point set  $\S :=\{ (\pm 2,0), (0,\pm 2), (\pm \tfrac12,\pm \tfrac12) \}.$ Convergence in the fourth iteration.\label{fig:evo1}}
%\vspace{-0.6cm}
\end{figure}
\begin{figure}[t]
  \begin{subfigure}{.48\columnwidth}
  \centering
        \includegraphics[scale=.5]{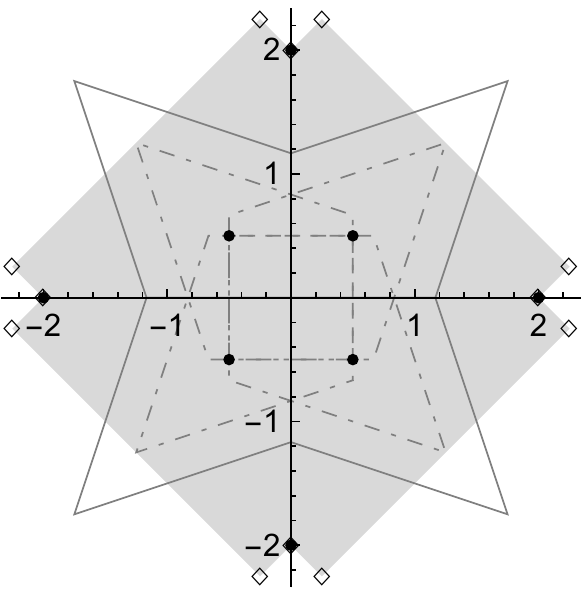}
        \caption{\raggedright $\S :=\{ (\pm 2,0), (0,\pm 2), (\pm \tfrac12,\pm \tfrac12) \}$. Minimal invariant set reached after $4$ iterations.}
        \label{fig:sset1}
    \end{subfigure}
    \hfill
 \begin{subfigure}{.48\columnwidth}
  \centering
  \includegraphics[scale=.5]{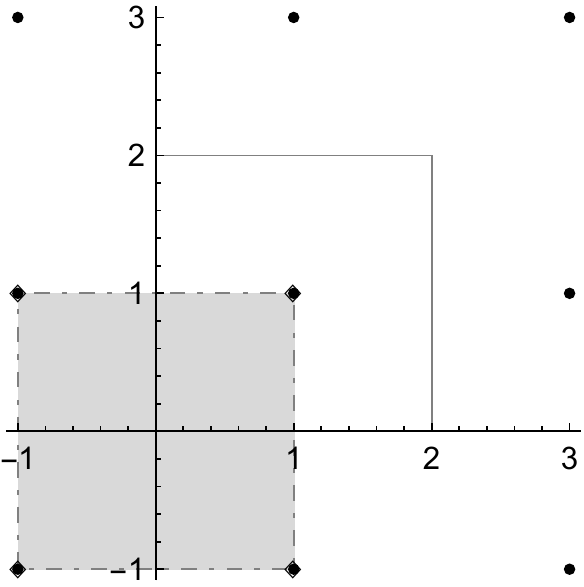} %\hspace{2cm}
\caption{\raggedright $\S :=\{ - 1,1,3\}^2 $. Minimal invariant set reached after $1$ iteration.}
\label{fig:sset4}
    \end{subfigure}\\[2em]
  \begin{subfigure}{.46\columnwidth}
  \centering
  \includegraphics[scale=.6]{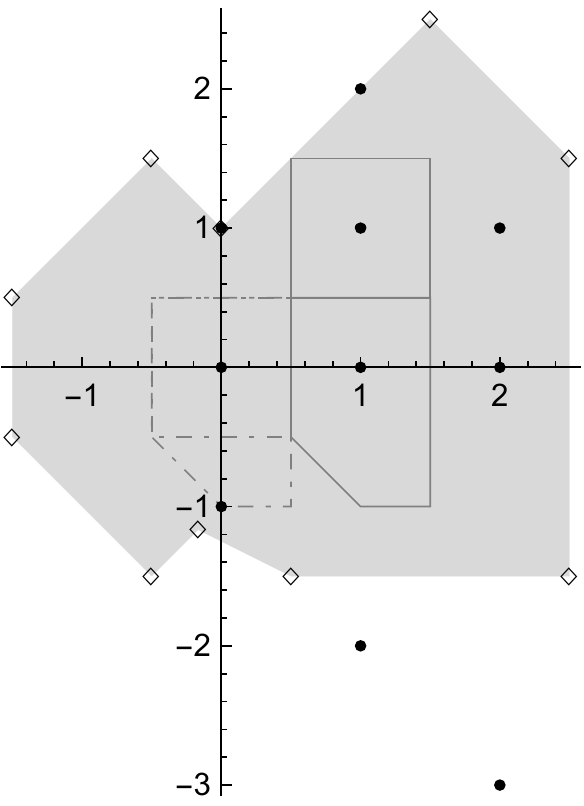}
  \caption{\raggedright $\S :=(\{ 0,1,2\} \times \{0,1\}) \cup \{(1,2), (0,-1),(1,-2),(2,-3) \} $. ``Nearly minimal'' invariant set (some coordinate rounding occurred near convergence) reached after $63$ iterations.}
  \label{fig:sset3}
    \end{subfigure}
    \hfill
 \begin{subfigure}{.48\columnwidth}
  \centering
  \includegraphics[scale=.6]{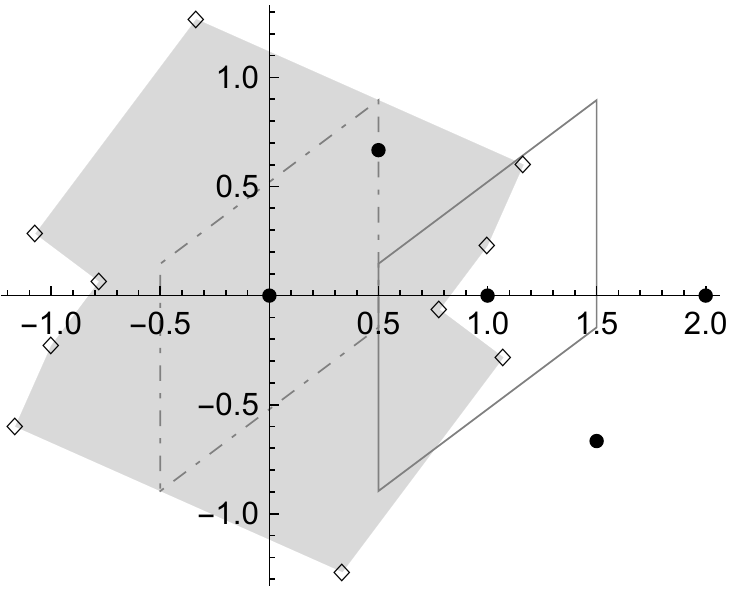}\\
  \caption{\raggedright $\S :=\{(0,0), (\tfrac12, \tfrac23),(1,0),$ $(\tfrac32,-\tfrac23),(2,0)\}$. Minimal invariant set reached after $6$ iterations.}
  \label{fig:sset2}
    \end{subfigure}
    \caption{Examples of $G$-invariant sets that correspond to single-set collections $\mathbb S :=\{\mathcal{S}\}$.%, where in each subfigure $\mathcal S$ is a set of points in $\mathbb{R}^2$. 
    The black dots indicate the points, the shaded areas represent the invariant sets and the diamonds represent their corner points, the solid gray lines indicate the bounded Voronoi regions, and the dashed lines show those regions translated by their corresponding Voronoi vertices. Note that \refprop{bounded_vor_regions} asserts that the invariant set covers all translated bounded Voronoi cells.\label{fig:singleset1}} 
%\vspace{-0.5cm}   
\end{figure}

\begin{figure}
  \begin{subfigure}[b]{.30\columnwidth}
  \centering
  \includegraphics[scale=.6]{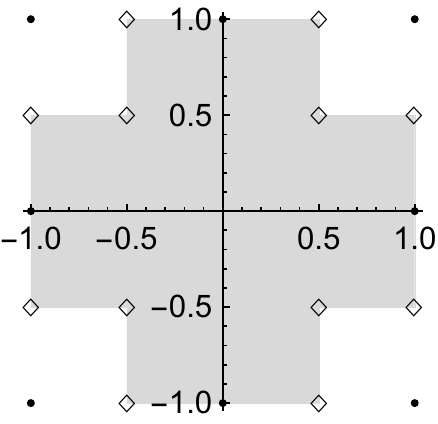}
      \caption{$\S_1$}
      \label{fig:pts1}
    \end{subfigure}
    \hfill
  \begin{subfigure}[b]{.30\columnwidth}
  \centering
        \includegraphics[scale=.6]{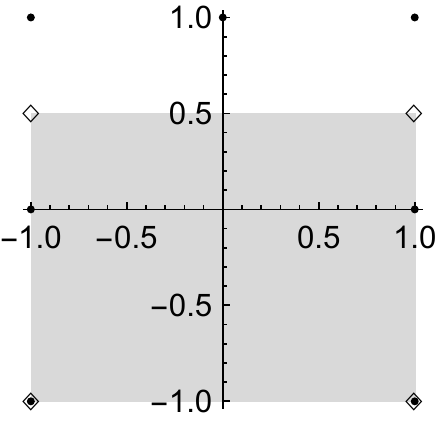}
      \caption{$\S_2$}
      \label{fig:pts2}
    \end{subfigure}
\hfill
  \begin{subfigure}[b]{.30\columnwidth}
  \centering
        \includegraphics[scale=.6]{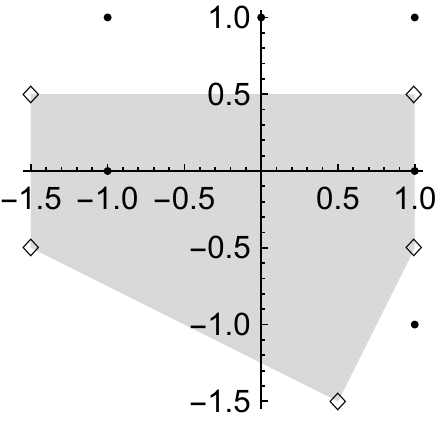}
      \caption{$\S_3$}
      \label{fig:pts3}
    \end{subfigure}
    \caption{Members of the collection $\SS'$, drawn with their individual $G$-invariant error sets.\label{fig:sprime}} 
\end{figure}

\begin{figure}
  \begin{subfigure}{.48\columnwidth}
  \centering
        \includegraphics[scale=.6]{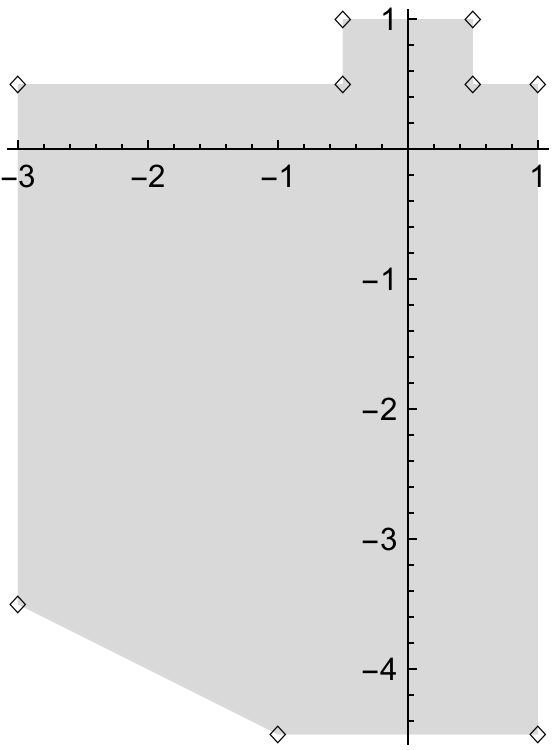}
\caption{The nearly minimal $G$-invariant error set for  $\mathbb{S}'\!=\!\{\mcal{S}_1,\mcal{S}_2,\mcal{S}_3\}$.
}
      \label{fig:jointG}
    \end{subfigure}
    \hfill
  \begin{subfigure}{.48\columnwidth}
  \centering
        \includegraphics[scale=.6]{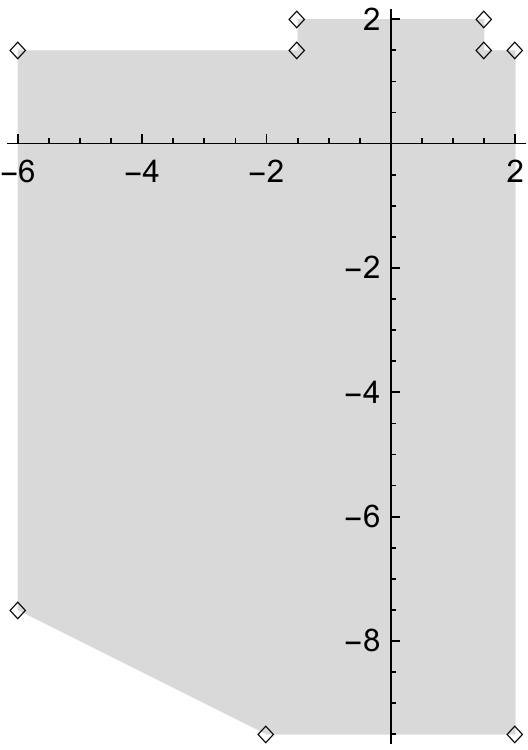}
\caption{The minimal $F$-invariant error set for  $\mathbb{S}'\!=\!\{\mcal{S}_1,\mcal{S}_2,\mcal{S}_3\}$, found after $362$ iterations.} \label{fig:finvset}
      \label{fig:jointF}
    \end{subfigure}
    \caption{Invariant sets corresponding to the collection $\SS'$.
    Note that the $G$-invariant error sets that correspond to the individual sets $\S_i$ (shown in \reffig{sprime}) are much smaller than the joint $G$-invariant error set. \label{fig:jointsets}}
    \vspace{-0.3cm}
\end{figure}

%, with $\mcal{S} = \mcal{X}\times \mcal{Y}$, where $\mcal{X} = \{-1,1,3,5\}$ and $\mcal{Y}:=\{-1,1\}$, i.e., eight points on a rectangular grid. \reffig{reg} shows these vertices and the minimal invariant error set (in grey), which was found by our computational method (based on Theorem~\ref{theo:min_G_conv}) after one iteration.

In our second example, we consider  
$\mathbb{S}' = \{ \mcal{S}_1,  \mcal{S}_2,  \mcal{S}_3 \}$, with 
\begin{align*}
\mcal{S}_1 &:= \{ (-1, -1),(0, -1),(1, -1),(1, 0),(1, 1),(0, 1), \\
           & \phantom{:=\{ } (-1, 1), (-1, 0) \}, \\
\mcal{S}_2 &:= \mcal{S}_1 \setminus \{(0,-1)\}, \\ 
\mcal{S}_3 &:= \mcal{S}_2 \setminus \{(-1,-1)\}.
\end{align*}
In words: the set $\mcal{S}_1$ is a collection of points that are placed equidistantly on a rectangle, and the set $\mcal{S}_2$ and $\mcal{S}_3$ respectively are created by removing one resp. two points from $\mcal{S}_1$; see \reffig{sprime}. This could correspond to a setting  in practice where the decision maker can implement points from $\mcal{S}_1$ most of the time, but once in a while one particular setpoint, (and sometimes even an additional particular setpoint) becomes temporarily infeasible. %unavailable.

%\vspace{-0.3cm}
\subsection{Examples of $G$-Invariance}
%\vspace{-0.2cm}
The nearly minimal invariant error set, shown in \reffig{jointG}, has $9$ vertices and was found after $185$ iterations, with $\epsilon = 10^{-8}$ as rounding parameter.
From this figure, we see that the nearly minimal invariant error set corresponding to $\mathbb{S}'$ (the ``joint'' invariant error set) is significantly larger than the (nearly) minimal invariant error sets corresponding to singletons $\mathbb{S}''=\{\mcal{S}_i\}$ for all $i\in \{1,2,3\}$.

%\vspace{-0.3cm}

\subsection{Examples of $F$-Invariance}
%\vspace{-0.2cm}
For single-set collections $\SS=\{\S\}$, the minimal $F$-invariant affine region equals $\conv \S + Q$, where $Q$ is the minimal $G$-invariant error set. Hence, computing the $F$-invariant regions for the same single-set examples as used in the previous subsection again would be uninteresting. 
For collections $\SS$ with more than one member, the jointly $F$-invariant region is not equivalent to the jointly $G$-invariant set. For example, in \reffig{finvset} we show the jointly $F$-invariant region for the collection $\SS'$.  
%\begin{figure}
%  \centering
%  \includegraphics[scale=.7]{figs/f_three_joint2}
%\caption{The minimal $F$-invariant error set for  $\mathbb{S}'\!=\!\{\mcal{S}_1,\mcal{S}_2,\mcal{S}_3\}$, found after $362$ iterations.} \label{fig:finvset}
%\end{figure}

%\vspace{-0.3cm}
\subsubsection*{An Example where \SS has Infinite Cardinality}
%\vspace{-0.2cm}
In this example we consider a triangle $\mcal{T}(h)$ whose height is parameterized by $h\in [0,h_{\max}]\subset \reals$: 
\[
  \mcal{T}(h) :=  \{ (x,y)\in \reals^2 : 0 \leq y \leq h, \frac{|x|}{y}\leq \tan \varphi \},
\]
where $\varphi$ is some fixed angle; 
see \reffig{Tset} for an illustration.
We define the collection of sets as $$\SS = \{\mcal{T}(h) \}_{0 \leq h \leq h_{\max}}.$$
Note that the collection $\SS$ has infinite cardinality.

The motivation for studying this particular example stems from an application in power systems: 
the triangular shape can be interpreted as the feasible set of a power converter for a photovoltaic panel, where the two dimensions correspond to active (along the $Y$ axis) and reactive power (along the $X$ axis), the angle $\varphi$  corresponds to the \emph{power factor}, and $h$ represents the amount of available power; see, e.g., \cite{commelec1}.

\begin{theorem} \label{thm:pv}
%Let $P_{\max}$, $\varphi$, and $\S_n$ (for every $n\in \natnum$) be defined as above for a given PV system. 
%    Consider a resource agent for this PV system
%    that: (i) uses a persistent predictor to advertise $\A_{n+1} := \S_{n}$, and (ii) implements setpoints according to the greedy (error diffusion) algorithm. Then, 
%    
The set    $D := \mcal{T}(h_{\max})$ is the minimal $F$-invariant set with respect to $\SS = \{\mcal{T}(h) \}_{0 \leq h \leq h_{\max}}$.
%    
%    Therefore, if $e_0 \in D$, the accumulated error for this resource agent is bounded by
%    \[
%    \|e_n\| \leq \diam D = \max\l\{ \frac{P_{\max}}{\cos\varphi}, 2\, P_{\max}\tan \varphi \r\}
%    \]
%    for all $n \in \natnum$.
\end{theorem}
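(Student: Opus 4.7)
The plan is to apply \reflem{union_min} with $s_0 := (0,0) \in \bigcap_{h \in [0,h_{\max}]} \mathcal{T}(h)$. Since each $\mathcal{T}(h)$ is already convex, $\conv \mathcal{T}(h) = \mathcal{T}(h)$, and the nested family $\mathcal{T}(h) \subseteq \mathcal{T}(h')$ for $h \le h'$ gives $\bigcup_{h \in [0,h_{\max}]} \conv \mathcal{T}(h) = \mathcal{T}(h_{\max}) = D$. Hence, as soon as I show that $D$ is $F$-invariant with respect to \SS, \reflem{union_min} immediately yields that $D$ is the minimal $F$-invariant set (w.r.t.\ \SS) containing $s_0$.

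By \refprop{inv_mult_p}, $F$-invariance is equivalent to $\mathfrak{p}_\SS(D) \subseteq D$, i.e., for every $h \in [0, h_{\max}]$, every $z \in D$, and every $x \in \mathcal{T}(h)$, one has $z + x - \vor[\mathcal{T}(h)]{z} \in D$. Setting $p := \vor[\mathcal{T}(h)]{z}$ and $e := z - p$, this reduces to $x + e \in \mathcal{T}(h_{\max})$. If $z \in \mathcal{T}(h)$, then $p = z$, $e = 0$, and the claim is trivial. Otherwise $z_y > h$, since $\mathcal{T}(h)$ and $\mathcal{T}(h_{\max})$ share the apex $(0,0)$ and agree on the slanted edges below $y = h$, so any $z \in D$ with $z_y \le h$ already lies in $\mathcal{T}(h)$.

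I would then split the remaining case by the value of $|z_x|$. If $|z_x| \le h\tan\varphi$, then $p = (z_x, h)$ and $e = (0, z_y - h)$; for $x = (x_x, x_y) \in \mathcal{T}(h)$, the translate $x + e$ satisfies $|x_x| \le x_y \tan\varphi \le (x_y + z_y - h)\tan\varphi$ and $0 \le x_y + z_y - h \le z_y \le h_{\max}$, so $x + e \in D$. If instead $|z_x| > h\tan\varphi$, a short perpendicular-foot computation shows that the foot on the nearer slanted edge of $\mathcal{T}(h)$ falls beyond its upper endpoint, so $p$ is that corner; taking $z_x > 0$, one obtains $p = (h\tan\varphi, h)$ and $e = (z_x - h\tan\varphi, z_y - h)$, and it remains to verify $|x_x + z_x - h\tan\varphi| \le (x_y + z_y - h)\tan\varphi$ together with $0 \le x_y + z_y - h \le h_{\max}$ from the constraints $|x_x| \le x_y \tan\varphi$, $|z_x| \le z_y \tan\varphi$, $z_x \ge h\tan\varphi$, and $z_y \ge h$.

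The main obstacle I anticipate is this second subcase: one must first argue geometrically that the projection is the corner rather than an interior point of a slanted edge (which uses $z_x > h\tan\varphi$ together with $z_y > h$), and then verify the lower bound $-(x_y + z_y - h)\tan\varphi \le x_x + z_x - h\tan\varphi$, which is the only place that uses $z_x \ge h\tan\varphi$ and $z_y \ge h$ simultaneously. The remaining inequalities follow routinely from the two defining constraints of $\mathcal{T}$.
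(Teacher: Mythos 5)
Your proof is correct and follows the same high-level strategy as the paper: reduce, via \reflem{union_min}, to showing $F$-invariance of $D=\mcal{T}(h_{\max})$, and then verify $\mathfrak{p}_\SS(D)\subseteq D$ using \refprop{inv_mult_p}. Where you differ is only in how that last containment is checked. The paper argues geometrically, identifying the three Voronoi-cell types of $\mcal{T}(h)$ (singleton for interior points, normal ray for non-corner boundary points, normal cone for corners), constructing $\mcal{G}(h):=\bigcup_{c\in\mcal{T}(h)}\vsect{\mcal{T}(h_{\max})}-c$, and reading off $\mcal{T}(h)+\mcal{G}(h)=\mcal{T}(h_{\max})$ from the figure. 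You instead do a pointwise verification with an explicit case split on the projection $p=\vor[\mcal{T}(h)]{z}$ (either $z$ lies in $\mcal{T}(h)$, or $p$ is an interior point of the top facet, or $p$ is one of its corners), followed by elementary coordinate inequalities. Your computation is right: for the corner subcase, the foot of the perpendicular onto the nearer slanted edge has arclength $z_x\sin\varphi+z_y\cos\varphi>h/\cos\varphi$ exactly when $z_x>h\tan\varphi$ and $z_y>h$, so the projection lands on the corner, and the lower-bound inequality $x_x+z_x+(x_y+z_y)\tan\varphi\ge 2h\tan\varphi$ follows from $x_x+x_y\tan\varphi\ge 0$, $z_x\ge h\tan\varphi$, and $z_y\ge h$. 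Your approach is more tedious but fully rigorous and self-contained; the paper's is shorter and more visual but leans on a figure for the decisive step.
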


%Hence, if having a small invariant error-set is of central importance in an application, then one could, in this particular example, decide to only use the setpoints in $\mcal{S}_3$ at the cost of providing a smaller feasible set (on average) to the central controller.

%\begin{figure}
%  \centering
%  \includegraphics{reg3}
%\caption{A single-point-set example. The minimal invariant error set is shown in light gray.} \label{fig:reg}
%\end{figure}

%\begin{figure}
%  \centering
%  \includegraphics{reggrid}
%\caption{The set of setpoints $\mcal{S}_1$ (all points). The grey and lightgray points indicate the setpoints that are removed from $\mcal{S}_1$ to construct $\mcal{S}_2$ and $\mcal{S}_3$.} \label{fig:reggrid}
%\end{figure}

%\andy{add interpretation of the examples}
%\andy{plot the points}
%\andy{discussion of $S$ vs $S = S_3$: for local controller latter is better (or not), but it removes flexibilty.}

\iffalse
  ; a convex polygon with corner-vertices 
$\{
 (-3 , -\frac{7}{2} ),
 (-1 , -\frac{9}{2} ),
 (1 , -\frac{9}{2} ),
 (1 , \frac{1}{2} ),
 (\frac{1}{2} , 1 ),
 (-\frac{1}{2} , 1 ),
 (-3 , \frac{1}{2}
 
\}$. In the same figure, the invariant error sets are drawn corresponding to the cases  where $\mathbb{S}''=\{\mcal{S}_i\}$, for all $i \in \{1,2,3\}$. Those error sets 
\fi

\begin{figure*}[t]
  \vspace{-.5em}
\begin{subfigure}{.3\textwidth}
  \centering
  \includegraphics[scale=0.8]{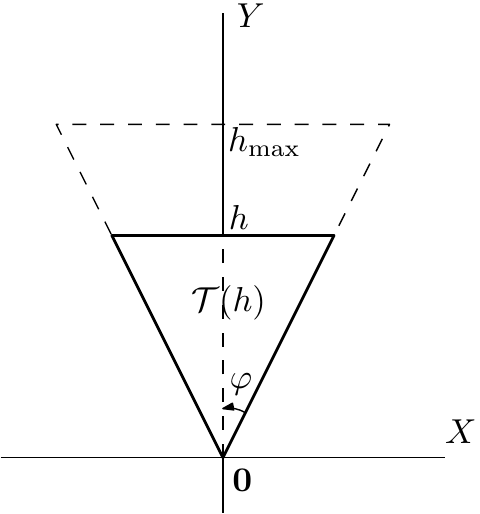}
  \caption{The parameterized collection of sets $\{ \mcal{T} (h): h\in [0,h_{\max}]\}$.\label{fig:Tset}}
\end{subfigure}
\hfill
\begin{subfigure}{.65\textwidth}
  \centering
  \includegraphics[scale=.8]{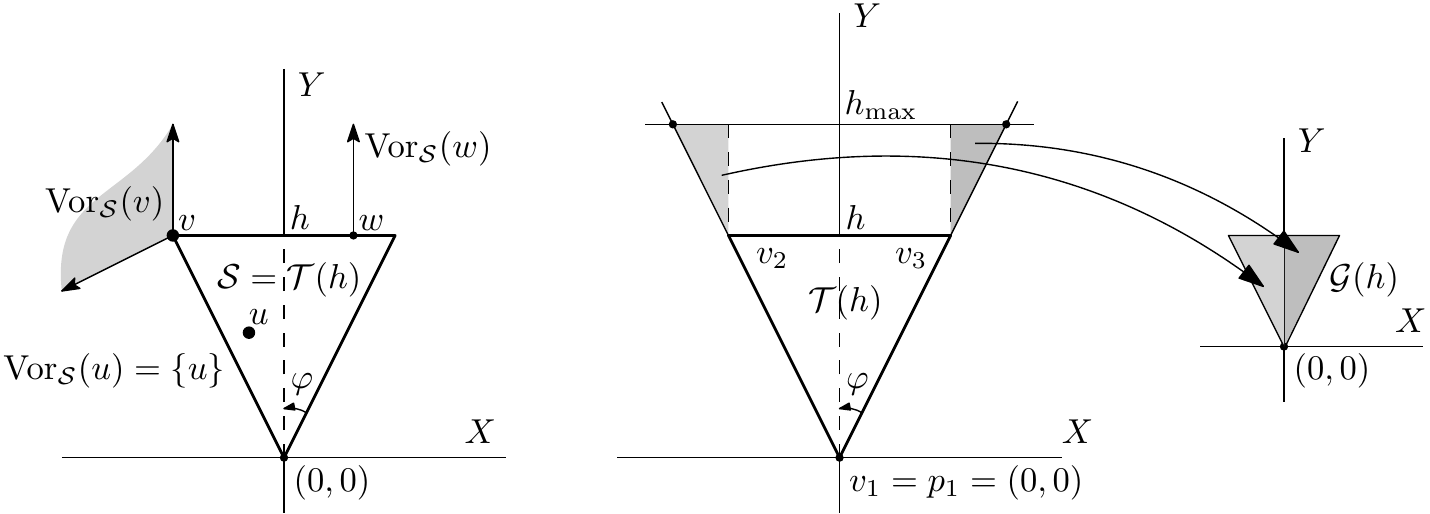}
  \caption{\emph{Left} -- The three Voronoi cell-types for the set \S, 1) a singleton for any interior point $u$, 2) a cone for any corner point $v$, and 3) a ray (normal to the corresponding facet) for any non-corner point $w$ on the boundary. \emph{Right} -- Construction of  $\mcal{G}(x)$.\label{fig:triangleconstr2}}
\end{subfigure}
\caption{An example where $\SS$ has infinite cardinality.}
\vspace{-1.3em}
\end{figure*}

\vspace{-0.3cm}

\begin{proof}
Observe that $\bigcap_{\S \in \SS} \S = \{0\}$ and 
\[
\bigcup_{\S \in \SS} \conv S = \bigcup_{\S \in \SS}  S = \bigcup_{0 \leq h \leq h_{\max}} \mcal{T}(h) = \mcal{T}(h_{\max}).
\]
Hence, it remains to show that $\mcal{T}(h_{\max})$ is $F$-invariant with respect to $\SS$. The minimality property then follows immediately from \reflem{union_min}. 
By \refprop{inv_mult_p}, it is enough to show that
\[
\mathfrak{p}_\SS(\mcal{T}(h_{\max})) \subseteq \mcal{T}(h_{\max}).
\]
In other words, we want to show that for any $\S \in \SS$,
\[
\S + \bigcup_{c \in \S} \vsect{\mcal{T}(h_{\max})} - c \subseteq \mcal{T}(h_{\max}).
\]

%\vspace{-0.2cm}
%or equivalently for all $\S \in \SS$, any $v \in %\mcal{T}(P_{\max})$, any $u \in \S$, we have that
%\[
%u + v - \vor[\S]{v} \in \mcal{T}(P_{\max}).
%\]
%
%Recall that $\mathbb{S} = \{\mcal{T}(x)\}_{0 \leq x \leq P_{\max}}
%
%Take any $\mcal{S}\in \mathbb{S}$ and any $u\in \mcal{S}$ and any $v\in \mcal{T}(P_{\max})$. 
First, note that it is straightforward to characterize the different types of Voronoi cells of \S (see also \reffig{triangleconstr2}): 
for interior points, the Voronoi cell is the point itself;
for non-corner points on the boundary, the Voronoi cell is the outward-pointing ray that emanates from that point and is normal to the facet;
for corner points on the boundary, the Voronoi cell is a cone, namely the union of all rays that emanate from that corner point, whose directions vary (continuously) between the normals of the adjacent facts.

Now, it is not hard to see that for any $h\in[0,h_{\max}]$, we can construct $
\mcal{G}(h) := \bigcup_{c \in \mcal{T}(h)} \vsect{\mcal{T}(h_{\max})} - c
$  as shown in \reffig{triangleconstr2}. From this construction it then immediately follows that 
$\mcal{T}(h) + \mcal{G}(h) = \mcal{T}(h_{\max})$, which  
proves $F$-invariance of $\mcal{T}(h_{\max})$ with respect to $\mathbb{S}$. %is what we needed to show.
\iffalse
\fi
\end{proof}

\begin{corollary}
Let $D$ be as in \refthm{pv}. If $e_0 \in D$, then the accumulated error is bounded by
\[
\|e_n\| \leq \diam D = \max\l\{ P_{\max}/{\cos\varphi}, 2\, P_{\max}\tan \varphi \r\}
\]
for all $n \in \natnum$.
\end{corollary}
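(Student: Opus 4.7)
The plan is to combine \refthm{pv} with the preceding corollary (bounding $\|e_n\| \le \diam(\mathfrak{p}^\infty_\SS(\{s_0\}))$), and then reduce to a purely geometric diameter computation on the triangle $\mcal{T}(h_{\max})$.

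First I would verify that the hypotheses of the preceding corollary are met by the triangle family $\SS = \{\mcal{T}(h)\}_{0 \leq h \leq h_{\max}}$. Since $\bigcap_{\S \in \SS} \S = \mcal{T}(0) = \{(0,0)\}$, the only admissible choice is $s_0 = (0,0)$. By \refthm{pv}, $\mathfrak{p}^\infty_\SS(\{s_0\}) = D$, so the preceding corollary (under $e_0 = 0$, which gives $z_0 = x_0 \in \cup_{\S \in \SS} \conv \S \subseteq D$) yields $\|e_n\| \leq \diam D$ for every $n \geq 1$; the case $n = 0$ (where $e_0 = 0$) is trivial. (If the intended hypothesis is the weaker $e_0 \in D$, one invokes \refprop{inv_bound_domain}(ii) instead, noting that $\S \subseteq D$ for every $\S \in \SS$ so the condition there applies and gives the same bound provided $z_0 \in D$.)

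It then remains to compute $\diam D$. The set $D = \mcal{T}(h_{\max})$ is a closed triangle with vertex set
\[
V = \bigl\{(0,0),\ (h_{\max}\tan\varphi,\, h_{\max}),\ (-h_{\max}\tan\varphi,\, h_{\max})\bigr\}.
\]
Since $D$ is the convex hull of $V$ and the Euclidean norm is convex, the diameter of $D$ equals the maximum pairwise distance among elements of $V$. A direct computation gives
\[
\|(0,0) - (\pm h_{\max}\tan\varphi, h_{\max})\| = h_{\max}\sqrt{1+\tan^2\varphi} = \frac{h_{\max}}{\cos\varphi},
\]
and
\[
\|(h_{\max}\tan\varphi, h_{\max}) - (-h_{\max}\tan\varphi, h_{\max})\| = 2h_{\max}\tan\varphi.
\]
Taking the maximum of the two distinct values yields $\diam D = \max\{h_{\max}/\cos\varphi,\ 2h_{\max}\tan\varphi\}$, matching the claimed expression (with the identification $P_{\max} = h_{\max}$).

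The proof is essentially a bookkeeping exercise: the only conceptual point is confirming that the precondition $z_0 \in D$ (or equivalently $e_0 = 0$ together with $x_0 \in \cup_{\S \in \SS} \conv \S$) lets us invoke the diameter bound; after that, the triangle's diameter is a one-line computation. No substantive obstacle is expected.
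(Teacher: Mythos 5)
Your argument is correct and essentially the same as the paper's: both bound $\|e_n\|$ by $\diam D$ via Proposition~\ref{prop:inv_bound_domain}(ii) (you route through the corollary following Theorem~\ref{thm:bounded_F} together with the identification $D = \mathfrak{p}^\infty_\SS(\{s_0\})$ from \refthm{pv}, while the paper cites the proposition directly), and then compute the diameter of the isosceles triangle $\mcal{T}(h_{\max})$ from its vertices. Your observation that the stated hypothesis $e_0 \in D$ is weaker than the $z_0 \in D$ actually required by \refprop{inv_bound_domain} (resolved by $e_0 = \boldsymbol{0}$, so $z_0 = x_0 \in D$) is a fair catch that the paper glosses over.
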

\begin{proof}
From \refprop{inv_bound_domain} we have that 
$\|e_n\| \leq \diam D$
 for all $n \in \natnum$.
Because $\mcal{T}(h_{\max})$ is an isosceles triangle, its diameter is either $h_{\max}/\cos \varphi$ (the length of one of its \emph{legs}) for $\varphi \leq\frac{\pi}{6}$ or $ 2\,h_{\max}\tan \varphi$ otherwise (the length of its \emph{base}).
\end{proof}
%\niek{shall we also include the claim about the value of  $\diam D$?} \andy{let's do that!}

\vspace{-0.3cm}

\section{Conclusion}
We studied the problem of average point pursuit using the greedy algorithm under generalized conditions that include time-variability of the feasible sets and delay in information. We showed that, under certain conditions on the collection of the feasible sets, the greedy algorithm gives rise to bounded accumulated error dynamics, resulting in average tracking asymptotically. We illustrated the application of our results to general finite collections of discrete sets as well as to a particular case of a collection of infinite cardinality.

%\newpage

%\input{./body2.tex}
%\bibliographystyle{plain}

\section*{Acknowledgement}
We wish to thank Jean-Yves Le Boudec for helpful discussions about an earlier, related manuscript. %version of this work.

\bibliography{refs,biblio}

%%%%%%% convex iterates

%\clearpage
\appendix
\section{Convex $G$-Invariant Sets}
\label{sec:convexsets}
Observe that the minimal $G$-invariant set is not necessarily convex. 
For various reasons (e.g., computational), convexity might be a desired property.
Hence, we next provide variants of Theorems \ref{theo:min_G} and \ref{theo:bound_G} on the existence of minimal \emph{convex} invariant sets and the way to compute them.

\begin{definition}[Convex set operator induced by a collection of sets] \label{def:g_conv}
Fix $d \in \natnum$.
For any %discrete 
set %of points 
$\S \subseteq \reals^d$, define:
\[
\mathfrak{G}_\S(Q):=\conv{\left(\mathfrak{g}_\S(Q)\right)} = \conv{\left( \bigcup_{c \in \S } \vsect{(\conv{\S}+Q)} -c\right)}.
\]
For a collection $\SS$, let
\[
\mathfrak{G}_\SS(Q):=\bigcup_{\S \in \SS} \mathfrak{G}_\S(Q).
\]
\end{definition}

\begin{proposition} \label{prop:inv_conv}
A \emph{convex} set $A \subseteq \reals^d$ is invariant with respect to $\SS$ if and only if 
\[
A = \mathfrak{G}_\SS(A).
\]
\end{proposition}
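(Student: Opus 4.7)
The proof will reduce the claim to Proposition~\ref{prop:inv_mult} by exploiting the fact that, for a convex set $A$, enclosing in a convex hull preserves the inclusion $\mathfrak{g}_\S(A) \subseteq A$. First, observe that for every $\S \in \SS$ we have the chain
\[
\mathfrak{g}_\S(A) \;\subseteq\; \conv\bigl(\mathfrak{g}_\S(A)\bigr) \;=\; \mathfrak{G}_\S(A),
\]
so taking unions over $\SS$ yields $\mathfrak{g}_\SS(A) \subseteq \mathfrak{G}_\SS(A)$. Combined with Proposition~\ref{prop:inv_mult}(i), which gives $A \subseteq \mathfrak{g}_\SS(A)$, we immediately obtain $A \subseteq \mathfrak{G}_\SS(A)$ for every set $A$. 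Hence the equality $A = \mathfrak{G}_\SS(A)$ is equivalent to the single inclusion $\mathfrak{G}_\SS(A) \subseteq A$, and this is the only inclusion that needs to be tied to invariance.

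For the ($\Rightarrow$) direction, I would assume that $A$ is convex and $G$-invariant w.r.t.\ $\SS$. By Proposition~\ref{prop:inv_mult}(ii), $\mathfrak{g}_\S(A) \subseteq \mathfrak{g}_\SS(A) \subseteq A$ for every $\S \in \SS$. Since $A$ is convex, the monotonicity of the convex hull together with $\conv A = A$ gives
\[
\mathfrak{G}_\S(A) \;=\; \conv\bigl(\mathfrak{g}_\S(A)\bigr) \;\subseteq\; \conv A \;=\; A.
\]
Taking the union over $\S \in \SS$ yields $\mathfrak{G}_\SS(A) \subseteq A$, which combined with the universal inclusion above gives $A = \mathfrak{G}_\SS(A)$.

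For the ($\Leftarrow$) direction, I would assume $A$ is convex with $A = \mathfrak{G}_\SS(A)$, in particular $\mathfrak{G}_\SS(A) \subseteq A$. Since $\mathfrak{g}_\SS(A) \subseteq \mathfrak{G}_\SS(A)$ as noted above, we obtain $\mathfrak{g}_\SS(A) \subseteq A$. By Proposition~\ref{prop:inv_mult}(ii) this is exactly the condition for $A$ to be $G$-invariant w.r.t.\ $\SS$, completing the equivalence.

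There is no real obstacle here: the result is essentially a reformulation of Proposition~\ref{prop:inv_mult} in which the extra convex hull in the definition of $\mathfrak{G}_\SS$ is harmless because $A$ is already convex. The only subtle point to be careful about is that $\mathfrak{G}_\SS(A)$ is defined as a \emph{union} of convex hulls rather than the convex hull of the union, but this asymmetry does not affect the argument since we only need $\mathfrak{g}_\S(A) \subseteq \mathfrak{G}_\S(A)$ for each $\S$ individually.
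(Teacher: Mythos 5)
Your proof is correct and follows the same overall structure as the paper's: you first establish the universal inclusion $A \subseteq \mathfrak{g}_\SS(A) \subseteq \mathfrak{G}_\SS(A)$, reducing the equality to the single inclusion $\mathfrak{G}_\SS(A) \subseteq A$, and then tie that inclusion to $G$-invariance in both directions by passing through $\mathfrak{g}_\S(A) \subseteq \mathfrak{G}_\S(A)$ and invoking the non-convex characterization from Proposition~\ref{prop:inv_mult}. The one place where your argument genuinely differs is in the ``invariance implies containment'' direction. The paper argues by contradiction: it picks $v \in \mathfrak{G}_\S(A)$ with $v \notin A$, writes $v = \sum_i \beta_i v_i$ with $v_i \in \mathfrak{g}_\S(A) \subseteq A$ and $\sum_i \beta_i = 1$, $\beta \geq 0$, and derives a contradiction with the convexity of $A$. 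You instead apply the monotonicity of $\conv$ together with the idempotence $\conv A = A$ directly, getting $\mathfrak{G}_\S(A) = \conv(\mathfrak{g}_\S(A)) \subseteq \conv A = A$ in one step. Both encode the same underlying fact (a convex set contains the convex hull of any of its subsets); your version is more compact and avoids the indirect detour, while the paper's unpacks the convex-combination representation explicitly. Your closing remark about $\mathfrak{G}_\SS$ being a union of convex hulls rather than the convex hull of a union is well taken and correctly dispatched.
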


\begin{proof}
By Lemma \ref{lem:A_in_g},
\[
A \subseteq \mathfrak{g}_\S(A) \subseteq \mathfrak{G}_\S(A).
\]
for every $\S \in \SS$. %$k \in [K]$. 
Hence, $A \subseteq \mathfrak{G}_\SS (A)$.

%By the virtue of Lemma \ref{lem:A_in_g_mult}, we can focus on the condition $A \supseteq \mathfrak{g}_\SS (A)$.
We next prove that $\mathfrak{G}_\SS(A) \subseteq A$ if and only if $A$ is invariant.

$(\Rightarrow)$ Assume $A \supseteq \mathfrak{G}_\SS (A)$. Then, also $A \supseteq \mathfrak{G}_{\S} (A) \supseteq \mathfrak{g}_{\S} (A)$ for all $\S \in \SS$. %$k \in [K]$. 
Thus, by Proposition \ref{prop:inv}, $A$ is invariant for all $\S \in \SS$, %$k \in [K]$ 
hence  invariant with respect to $\SS$.

$(\Leftarrow)$  Assume that $A$ is a convex invariant set. Also, assume by the way of contradiction that $A \not\supseteq \mathfrak{G}_\S(A)$ for some $\S \in \SS$. 
Namely, there exists $v \in \mathfrak{G}_\S(A)$ such that $v \notin A$. But every $v \in \mathfrak{G}_\S(A)$ can be written as $v = \sum_i \beta_i v_i$ with $v_i \in \mathfrak{g}_\S(A)$ and $\sum_i \beta_i = 1$, $\beta \geq 0$. By the invariance of $A$, we thus have that $v_i \in A$, but $v = \sum_i \beta_i v_i \notin A$, a contradiction to convexity of $A$. Therefore $A \supseteq \mathfrak{G}_\S(A)$ for all $\S \in \SS$, and hence $A \supseteq \mathfrak{G}_\SS(A)$ as required.
\end{proof}

\begin{theorem}[Minimal Convex Invariant Set] \label{theo:min_G_conv}
Let $Q \subseteq \reals^d$. The following statements hold:
\begin{enumerate}
    \item[(i)] The iterates
      \begin{align*}
  \mathfrak{G}^n_\SS (Q) &:= \mathfrak{G}_\SS (\mathfrak{G}^{n-1}_\SS (Q)) \quad \text{for } n\in \natnum, n \geq 1, \\%\quad 
  \mathfrak{G}^0_\SS (Q) &:= \conv Q,
\end{align*}
are monotonic, in the sense that $\mathfrak{G}^n_\SS (Q) \subseteq \mathfrak{G}^{n'}_\SS (Q)$ for all $n \leq n'$, and the limit set 
\begin{equation} \label{eqn:conv_limit}
\mathfrak{G}^\infty_\SS (Q) := \lim_{n \rightarrow \infty} \mathfrak{G}^n_\SS (Q) = \bigcup_{n\geq0} \mathfrak{G}^n_\SS(Q)
\end{equation}
is the \emph{minimal convex invariant set} with respect to $\SS$ that contains $Q$.
\item[(ii)] Under the conditions of Theorem \ref{theo:bound_G}, $\mathfrak{G}^\infty_\SS (\{\boldsymbol{0}\})$ is a bounded set.
\end{enumerate}
\end{theorem}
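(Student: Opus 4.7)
The plan parallels the argument for Theorem~\ref{theo:min_G} (the non-convex analog), using the convex variant $\mathfrak{G}_\SS$ in place of $\mathfrak{g}_\SS$ and invoking \refprop{inv_conv} as the fixed-point characterization in place of \refprop{inv_mult}. Three preliminary facts about $\mathfrak{G}_\SS$ do the bulk of the work: \textbf{(a)} monotonicity, $A \subseteq B \Rightarrow \mathfrak{G}_\SS(A) \subseteq \mathfrak{G}_\SS(B)$, inherited from monotonicity of $\mathfrak{g}_\SS$ (\reflem{g_prop_mult}(i)) and of $\conv(\cdot)$; \textbf{(b)} extensivity, $A \subseteq \mathfrak{G}_\SS(A)$, already proved inside \refprop{inv_conv} via $A \subseteq \mathfrak{g}_\SS(A) \subseteq \conv(\mathfrak{g}_\SS(A))$; and \textbf{(c)} a \emph{nested-union} compatibility,
\[
\mathfrak{G}_\SS\l(\bigcup_n A_n\r) = \bigcup_n \mathfrak{G}_\SS(A_n) \quad \text{for } A_0 \subseteq A_1 \subseteq \cdots,
\]
which combines the additivity of $\mathfrak{g}_\SS$ (\reflem{g_prop_mult}(ii)) with the elementary observation that $\conv(\cdot)$ commutes with a monotone union (any convex combination of finitely many points of $\bigcup_n A_n$ already lies in some $\conv A_m$).

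Granted these, part~(i) is a step-by-step transcription of the proof of Theorem~\ref{theo:min_G}. Iterating (a) and (b) from the convex base case $\mathfrak{G}^0_\SS(Q) = \conv Q$ produces a monotone chain of iterates, so $\mathfrak{G}^\infty_\SS(Q) = \bigcup_{n \geq 0} \mathfrak{G}^n_\SS(Q)$ is well-defined. Each iterate is convex (base case trivial; inductive step provided by the outer $\conv$ in \refdef{g_conv}), so the limit is convex as a nested union of convex sets. Invariance follows by applying (c):
\[
\mathfrak{G}_\SS\bigl(\mathfrak{G}^\infty_\SS(Q)\bigr) = \bigcup_{n \geq 0} \mathfrak{G}_\SS\bigl(\mathfrak{G}^n_\SS(Q)\bigr) = \bigcup_{n \geq 1} \mathfrak{G}^n_\SS(Q) = \mathfrak{G}^\infty_\SS(Q),
\]
where the last equality uses (b) applied at $Q$; \refprop{inv_conv} then certifies convex invariance. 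For minimality, given any convex invariant $A \supseteq Q$: the base case $\mathfrak{G}^0_\SS(Q) = \conv Q \subseteq A$ uses convexity of $A$, and the inductive step $\mathfrak{G}^{n+1}_\SS(Q) \subseteq \mathfrak{G}_\SS(A) \subseteq A$ uses (a) together with \refprop{inv_conv}, so $\mathfrak{G}^\infty_\SS(Q) \subseteq A$.

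For part~(ii), I would reuse the bounded \emph{convex} set $Q_r$ constructed in the proof of Theorem~\ref{theo:bound_G} for $r$ large enough; that argument establishes $\mathfrak{g}_\SS(Q_r) \subseteq Q_r$. Since $Q_r$ is convex, \refprop{inv_conv} immediately upgrades this to $\mathfrak{G}_\SS(Q_r) \subseteq Q_r$, so $Q_r$ is a bounded convex invariant set containing $\{\boldsymbol{0}\}$. Minimality from part~(i) then forces $\mathfrak{G}^\infty_\SS(\{\boldsymbol{0}\}) \subseteq Q_r$, so the limit set is bounded.

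The main subtlety I anticipate is the convexity-preservation step in part~(i): read literally, $\mathfrak{G}_\SS(A) = \bigcup_{\S \in \SS} \mathfrak{G}_\S(A)$ is a union of convex sets, which need not itself be convex, so \emph{a priori} the iterates need not be convex. The natural remedy is to take an outer convex hull in the definition of $\mathfrak{G}_\SS$, making it manifestly convex-valued; with that reading (c) and the convexity of each iterate follow cleanly. Alternatively, one can run the iteration as stated and replace $\mathfrak{G}^\infty_\SS(Q)$ by $\conv\bigl(\mathfrak{G}^\infty_\SS(Q)\bigr)$, once again leveraging the commutation of $\conv$ with monotone unions to show that the two coincide. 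This is the single technical point on which the remainder of the argument hinges; everything else is routine manipulation of the fixed-point equation.
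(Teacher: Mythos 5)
Your overall route — monotone iterates, a limit set, invariance via a fixed-point characterization, then a recursive minimality argument, and boundedness by comparing with the paper's $Q_r$ — matches the paper's proof structure step for step. There is one small tactical difference worth noting: the paper establishes invariance of the limit by showing $\mathfrak{g}_\SS\bigl(\mathfrak{G}^\infty_\SS(A)\bigr) \subseteq \mathfrak{G}^\infty_\SS(A)$ and then invoking \refprop{inv_mult}, which holds for arbitrary sets, whereas you establish $\mathfrak{G}_\SS\bigl(\mathfrak{G}^\infty_\SS(Q)\bigr) = \mathfrak{G}^\infty_\SS(Q)$ and invoke \refprop{inv_conv}, which is stated only for \emph{convex} sets. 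The paper's choice is deliberate: it lets the invariance step go through without having to first settle whether the limit is convex.

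That brings us to the main point: you have correctly isolated the genuine subtlety. By \refdef{g_conv}, $\mathfrak{G}_\SS(Q) = \bigcup_{\S \in \SS}\mathfrak{G}_\S(Q)$ is a union of convex sets and is therefore not convex-valued when $|\SS| > 1$; the iterates $\mathfrak{G}^n_\SS(Q)$ for $n \geq 1$ need not be convex, and neither need the monotone-union limit. The paper's proof never addresses this; it proves that $\mathfrak{G}^\infty_\SS(Q)$ is invariant and that it lies inside every convex invariant superset of $Q$, but it does not prove that $\mathfrak{G}^\infty_\SS(Q)$ is itself convex. So for a multi-member collection the paper's argument is incomplete on exactly the point you flag, and your first remedy — putting the outer $\conv$ around the union over $\SS$ so that $\mathfrak{G}_\SS$ is manifestly convex-valued — is the sound fix: \refprop{inv_conv} and your fact~(c) survive the redefinition, each iterate and hence the limit becomes convex, and the rest of the argument runs as you describe. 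Your second remedy, however, is circular as stated: commuting $\conv$ with the monotone union gives $\conv\bigl(\mathfrak{G}^\infty_\SS(Q)\bigr) = \bigcup_n \conv\bigl(\mathfrak{G}^n_\SS(Q)\bigr)$, and identifying this with $\mathfrak{G}^\infty_\SS(Q)$ would require the individual iterates to already be convex, which is the very premise in question. Stick with the first remedy.
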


\begin{proposition} \label{lem:conv_mono}
For all sets $A$ and $B$ with $A \subseteq B$,
\[
\conv{A} \subseteq \conv{B}.
\]
\end{proposition}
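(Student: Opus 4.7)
The plan is to proceed from the definition of the convex hull. Recall that $\conv A$ can be characterized equivalently as either (a) the set of all finite convex combinations of points of $A$, or (b) the intersection of all convex sets containing $A$ (equivalently, the smallest convex set containing $A$). Either characterization yields a one-line argument, and I would simply pick the one that is cleanest to state.

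Using characterization (b), my plan is as follows. First, observe that $\conv B$ is, by definition, a convex set containing $B$. Since $A \subseteq B \subseteq \conv B$, the set $\conv B$ is a convex set containing $A$. By the minimality property of $\conv A$ as the smallest convex set containing $A$, we conclude $\conv A \subseteq \conv B$.

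Alternatively, using characterization (a): take any $x \in \conv A$ and write it as $x = \sum_{i=1}^k \lambda_i a_i$ with $a_i \in A$, $\lambda_i \geq 0$, and $\sum_i \lambda_i = 1$. Since $A \subseteq B$, each $a_i$ lies in $B$, so the same convex combination exhibits $x$ as an element of $\conv B$.

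There is no real obstacle here; the statement is essentially just monotonicity of a closure-type operator (the convex hull is a hull operator, hence automatically monotone). The only thing to double-check is that the paper's Definition of $\conv{\,\cdot\,}$ (given earlier as simply ``the convex hull of the set'') admits one of the two standard characterizations above, which it does. I would therefore present the one-line argument based on characterization (b), as it is the shortest and does not require introducing convex-combination notation.
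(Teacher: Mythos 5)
Your argument is correct. Note, though, that the paper does not actually prove this proposition: it is stated without proof as a standard fact (monotonicity of the convex-hull operator) and used directly in the proof of Theorem~\ref{theo:min_G_conv}. Your one-line derivation via characterization~(b) — $\conv B$ is a convex set containing $A$, hence contains the smallest such set $\conv A$ — is the cleanest way to fill in this gap, and the alternative via convex combinations is equally valid. Either would be appropriate if the authors had chosen to include a proof.
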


\begin{proof}[Proof of Theorem~\ref{theo:min_G_conv}]
First, by Lemma \ref{lem:g_prop_mult} (i) and the monotonicity of the convex hull (Proposition \ref{lem:conv_mono}), for all $A \subseteq B$  we have that $\mathfrak{G}_\SS(A) \subseteq \mathfrak{G}_\SS(B)$. Moreover,  by Lemma \ref{lem:A_in_g}, we have that $A \subseteq \mathfrak{G}_\S(A)$. Thus, applying the monotonicity property of $\mathfrak{G}_\S$ for this inclusion recursively, we obtain that existence of \eqref{eqn:conv_limit}. 

We next prove that $\mathfrak{G}^\infty_\SS (Q)$ is invariant. Similarly to the proof of Theorem \ref{theo:min_G}, we have that
\begin{eqnarray*}
\mathfrak{g}_\SS( \mathfrak{G}^\infty_\SS(A)) &=& \mathfrak{g}_\SS\left( \bigcup_{n \geq 0}  \mathfrak{G}^n_\SS(A) \right) 
= \bigcup_{n \geq 0} \mathfrak{g}_\SS\left(\mathfrak{G}^n_\SS(A) \right) \\
&\subseteq& \bigcup_{n \geq 0} \mathfrak{G}_\SS\left(\mathfrak{G}^n_\SS(A) \right) 
=\bigcup_{n \geq 1} \mathfrak{G}^n_\SS(A) \\
&=& A \cup \bigcup_{n \geq 1} \mathfrak{G}^n_\SS(A) = \mathfrak{G}^\infty_\SS(A),
\end{eqnarray*}
where the inclusion follows by the definition of the convex operator (Definition \ref{def:g_conv}).
To prove minimality, let $Q$ be any invariant convex set containing $A$. Then
\[
Q = \mathfrak{G}_\SS( Q ) \supseteq \mathfrak{G}_\SS( A ),
\]
where the first inclusion follows by Proposition \ref{prop:inv_conv} and the second inclusion follows by the monotonicity property. Applying these rules recursively, we obtain that
\[
Q \supseteq \mathfrak{G}_\SS^n( A ), \quad n \geq 0.
\]
This implies that
\[
Q \supseteq \bigcup_{n=0}^N \mathfrak{G}^n_\SS(A), \quad N \geq 0
\]
and hence
\[
Q \supseteq \mathfrak{G}^\infty_\SS(A).
\]
This completes the proof of part (i) of the Theorem.

Part (ii) of the Theorem follows from the proof of Theorem \ref{theo:bound_G} since it established the existence of a \emph{bounded convex} invariant set that contains the origin. As $\mathfrak{G}^\infty_\SS(\{\boldsymbol{0}\})$ is the minimal such set, it has to be bounded.
\end{proof}

\reffig{evo2} shows a numerical example of the convex $G$-iteration.

\begin{figure}
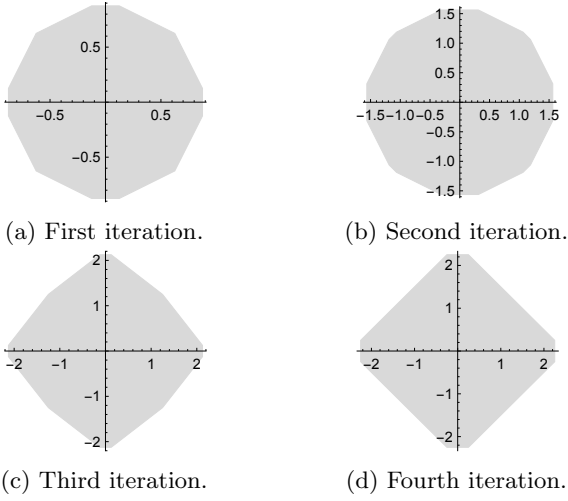

  \begin{subfigure}[b]{.24\textwidth}
  \centering
        \includegraphics[scale=.6,page=5]{figs/joined}
        \caption{First iteration.}
        \label{fig:fulliter11}
    \end{subfigure}
    \hfill
  \begin{subfigure}[b]{.24\textwidth}
  \centering
        \includegraphics[scale=.6,page=6]{figs/joined}
        \caption{Second iteration.}
        \label{fig:fulliter22}
    \end{subfigure}
    \hfill
  \begin{subfigure}[b]{.24\textwidth}
  \centering
        \includegraphics[scale=.6,page=7]{figs/joined}
        \caption{Third iteration.}
        \label{fig:fulliter33}
    \end{subfigure}
    \hfill
  \begin{subfigure}[b]{.24\textwidth}
  \centering
        \includegraphics[scale=.6,page=8]{figs/joined}
        \caption{Fourth iteration.}
        \label{fig:fulliter44}
    \end{subfigure}
    \caption{Evolution of the iterate for the convex $G$-iteration ($\mathfrak{G}_\S$) for point set $\S :=\{ (\pm 2,0), (0,\pm 2), (\pm \tfrac12,\pm \tfrac12) \}.$ Convergence in the fourth iteration.\label{fig:evo2}}
\end{figure}

%\niek{I think that the following is actually not entrirely true anymore, maybe it's better to remove it or only
%  keep the statement that convex sets require less vertices to represent:
%  \emph{We note that the choice between the convex iteration of Theorem \ref{theo:min_G_conv} and the original iteration of Theorem \ref{theo:min_G} reflects the trade-off between a) performing convex hull at every iteration to keep the number of vertices describing the iterate at a minimum, and thereby lowering the cost of computing the rest of each iteration (intersection, union, and Minkowski-sum operations), and b) not computing convex hull in each step (hence saving this computational cost), at the expense of having a (possibly) non-convex iterate and a potential increase in the number of vertices needed to represent it, which in turn could lead to an increased computational cost of the iteration as a whole. With respect to our software implementation (\cite{boumangit}, see also Section~\ref{sec:num}), several experiments indicate that the convex iteration is to be preferred when computing invariant sets explicitly.}
%}

\section{Convex $F$-Invariant Sets}
%We start by 
%In a 
Analogous to the convex $G$-iteration, 
we can define convexified counterparts of the set operators for the $F$-dynamics.

\begin{definition}[Set operators] \label{def:pconvex}
Fix $d \in \natnum$.
For any set $\S \subseteq \reals^d$, we define:
\[
%\mathfrak{p}_\S(D):=\P + \bigcup_{c \in \S } \vsect{D} -c, \quad 
\mathfrak{P}_\S(D):=\conv{\left(\mathfrak{p}_\S(D)\right)}.
\]
Also, for a collection \SS, let
\[
%\mathfrak{p}_\SS (D):=\bigcup_{\S \in \SS} \mathfrak{p}_\S(D), \quad 
\mathfrak{P}_\SS (D):= \bigcup_{\S \in \SS} \mathfrak{P}_\S (D).
\]
Finally, we define the iterates of the above operators
 %$\mathfrak{p}^n_\S(D),, \mathfrak{p}^n_\SS(D) 
 $\mathfrak{P}^n_\S(D)$ and $\mathfrak{P}^n_\SS(D)$, as before.
\iffalse
Let $D_{\circ} \subset \mathbb R^n$ such that $D_\circ \supseteq \conv \S$, and
\begin{align*}
  \mathfrak{p}^n_\S(D_\circ) &:=  \mathfrak{p}_\S ( \mathfrak{p}^{n-1}_\S(D_\circ)) && n \in \mathbb{N}, n\geq 1,&\mathfrak{p}^0_\S(D_\circ) &:= D_\circ, \\
  \mathfrak{P}^n_\S(D_\circ) &:=  \mathfrak{P}_\S ( \mathfrak{P}^{n-1}_\S(D_\circ)) && n \in \mathbb{N}, n\geq 1,&\mathfrak{P}^0_\S(D_\circ) &:= D_\circ. 
\end{align*}
Furthermore, let $D_{\circ}' \subset \mathbb R^n$ such that $D'_\circ \supseteq \conv  \bigcup_{\S \in \SS}  \S$, and define
\begin{align*}
  \mathfrak{p}^n_\SS(D'_\circ) &:=  \mathfrak{p}_\SS ( \mathfrak{p}^{n-1}_\SS(D'_\circ)) && n \in \mathbb{N}, n\geq 1,&\mathfrak{p}^0_\SS(D'_\circ) &:= D'_\circ, \\
  \mathfrak{P}^n_\SS(D'_\circ) &:=  \mathfrak{P}_\SS ( \mathfrak{P}^{n-1}_\SS(D'_\circ)) && n \in \mathbb{N}, n\geq 1,&\mathfrak{P}^0_\SS(D'_\circ) &:= D'_\circ. 
\end{align*}
\fi
\end{definition}
Given our previous results, invariance, monotonicity, boundedness, and minimality can then be proven in a similar way also for the convex $F$ iteration.

\end{document}